\theoremstyle{remark}
\newtheorem{para}{\bf}[section]
\newtheorem{rmk}[para]{\bf Remark}
\newtheorem{assumption}[para]{\bf Assumption}
\theoremstyle{definition}
\newtheorem{exam}[para]{\bf Example}
\newtheorem{dfn}[para]{\bf Definition}
\theoremstyle{plain}
\newtheorem{thm}[para]{\bf Theorem}
\newtheorem{lemma}[para]{\bf Lemma}
\newtheorem{sublemma}[para]{\bf Sublemma}
\newtheorem{cor}[para]{\bf Corollary}
\newtheorem{prop}[para]{\bf Proposition}
\newenvironment{numequation}
{\addtocounter{enumi}{1}\begin{equation}}{\end{equation}}
\newcommand{\vep}{\varepsilon}
\newcommand{\vphi}{\varphi}
\newcommand{\cE}{{\mathcal E}}
\newcommand{\cF}{{\mathcal F}}
\newcommand{\cH}{{\mathcal H}}
\newcommand{\cI}{{\mathcal I}}
\newcommand{\cJ}{{\mathcal J}}
\newcommand{\cL}{{\mathcal L}}
\newcommand{\cO}{{\mathcal O}}
\newcommand{\cX}{{\mathcal X}}
\newcommand{\bB}{{\bf B}}
\newcommand{\bG}{{\bf G}}
\newcommand{\bL}{{\bf L}}
\newcommand{\bM}{{\bf M}}
\newcommand{\bN}{{\bf N}}
\newcommand{\bP}{{\bf P}}
\newcommand{\bT}{{\bf T}}
\newcommand{\bU}{{\bf U}}
\newcommand{\bbG}{{\mathbb G}}
\newcommand{\bbN}{{\mathbb N}}
\newcommand{\bbP}{{\mathbb P}}
\newcommand{\bbQ}{{\mathbb Q}}
\newcommand{\bbR}{{\mathbb R}}
\newcommand{\bbZ}{{\mathbb Z}}
\newcommand{\fra}{{\mathfrak a}}
\newcommand{\frb}{{\mathfrak b}}
\newcommand{\frd}{{\mathfrak d}}
\newcommand{\frg}{{\mathfrak g}}
\newcommand{\frl}{{\mathfrak l}}
\renewcommand{\frm}{{\mathfrak m}}
\newcommand{\frp}{{\mathfrak p}}
\newcommand{\frq}{{\mathfrak q}}
\newcommand{\frt}{{\mathfrak t}}
\newcommand{\fru}{{\mathfrak u}}
\newcommand{\frx}{{\mathfrak x}}
\newcommand{\fry}{{\mathfrak y}}
\newcommand{\frz}{{\mathfrak z}}
\newcommand{\GL}{{\rm GL}}
\newcommand{\Hom}{{\rm Hom}}
\newcommand{\ad}{{\rm ad}}
\newcommand{\Ad}{{\rm Ad}}
\newcommand{\bksl}{\backslash}
\newcommand{\hra}{\hookrightarrow}
\newcommand{\ind}{{\rm ind}}
\newcommand{\Ind}{{\rm Ind}}
\newcommand{\Lie}{{\rm Lie}}
\newcommand{\lra}{\longrightarrow}
\newcommand{\midc}{\;|\;}
\newcommand{\Qp}{{\bbQ}_p}
\newcommand{\ra}{\rightarrow}
\newcommand{\Rep}{{\rm Rep}}
\newcommand{\Spec}{{\rm Spec}}
\newcommand{\sub}{\subset}
\newcommand{\supp}{{\rm supp}}
\newcommand{\alg}{{\rm alg}}
\newcommand{\Oa}{{\cO_\alg}}
\newcommand\Zp{{{\bbZ}_p}}
\newcommand{\Pf}{{\it Proof. }}
\renewcommand{\qed}{{\hfill{\space} $\Box$}}
\begin{document}

\title{On Jordan-H\"older series of some locally analytic representations}
\author{Sascha Orlik}
\address{Fachbereich C - Mathematik und Naturwissenschaften, Bergische Universit\"at Wuppertal,
Gau{\ss}stra\ss{}e 20, D-42119 Wuppertal, Germany}
\email{orlik@math.uni-wuppertal.de}
\author{Matthias Strauch}
\address{Indiana University, Department of Mathematics, Rawles Hall, Bloomington, IN 47405, USA}
\email{mstrauch@indiana.edu}
\thanks{M. S. would like to acknowledge the support of the National Science Foundation (award numbers
DMS-0902103 and DMS-1202303).}
\maketitle

\normalsize

\begin{abstract}
Let $G$ be a split reductive $p$-adic group. This paper is about the Jordan-H\"older series of locally analytic $G$-representations which are induced from locally algebraic representations of a parabolic subgroup $P \subset G$. We construct for every representation $M$ of $\Lie(G)$ in the {\rm BGG}-category $\cO$, which is equipped with an algebraic $P$-action, and for every smooth $P$-representation $V$, a locally analytic representation $\cF^G_P(M,V)$ of $G$. This gives rise to a bi-functor to the category of locally analytic representations. We prove that it is exact and give a criterion for the topological irreducibility of $\cF^G_P(M,V)$ in terms of $M$ and $V$.
\end{abstract}

\tableofcontents

\section{Introduction}
Let $L$ be a finite extension of $\Qp$ and let $G = \bG(L)$ be the group of $L$-valued points of a split connected reductive algebraic group $\bG$ over $L$. This paper is about the Jordan-H\"older series of certain classes of locally analytic $G$-representations. In particular, our treatment includes those representations which are induced from locally algebraic representations of a parabolic subgroup. 

The theory of locally analytic representations was introduced by P. Schneider and J. Teitelbaum in \cite{ST1}. Such representations 
appear in the study of vector bundles on $p$-adic period domains \cite{ST3, O} and in the $p$-adic Langlands program. In \cite{S2,Te} 
the authors pose the question of determining  irreducible parabolically induced locally-analytic representations. More generally, one would like to know the Jordan-H\"older series of such an object. Here we give a partial answer to these questions. As an application we consider the $\GL_{d+1}$-equivariant filtration constructed in \cite{O} on the Fr\'echet space $H^0(\cX,\cL)$, where $\cL$ is a homogeneous line bundle on projective space $\bbP^d_K$ and $\cX \subset \bbP^d_K$ is the Drinfeld half space. It is shown here that this filtration is either a Jordan-H\"older series, or that it has a simple refinement (which we describe explicitly) that is a Jordan-H\"older series.


In order to explain our main construction and results, we introduce some notation.
Fix a Borel subgroup $\bB \sub \bG$, a split maximal torus $\bT \sub \bB$, a standard parabolic subgroup $\bP \supset \bB$, and let $\frb = \Lie(\bB)$, $\frt = \Lie(\bT)$, $\frp = \Lie(\bP)$, and $\frg = \Lie(G)$ be their Lie algebras. The corresponding groups of $L$-valued points are denoted by $B = \bB(L)$, $T = \bT(L)$, and $P = \bP(L)$. Furthermore, we fix once and for all a finite extension $K$ of $L$ which will be our field of coefficients. The base change of an $L$-vector space (or scheme over $L$) to $K$ will always be denoted by the subscript ${}_K$, e.g., $\frg_K = \frg \otimes_L K$, $\bG_K = \bG \times_{\Spec(L)} \Spec(K)$.


We will study locally analytic representations which are in the image of certain bi-functors

\vspace{-0.3cm}
$$\cF^G_P(\cdot \,,\, \cdot): \cO^\frp_{\alg} \times \Rep^{\infty,{\rm adm}}_K(L_P) \longrightarrow  \Rep_K^{\ell a}(G) \,$$

\noindent depending on the standard parabolic subgroup $P \subset G$. Here $\Rep^{\infty,{\rm adm}}_K(L_P)$ is the category of smooth 
admissible representations of the Levi subgroup $L_P \sub P$  on $K$-vector spaces. Further $\Rep_K^{\ell a}(G)$ is the category of locally analytic representations of $G$ on $K$-vector spaces.  By $\cO = \cO^\frb$ we denote the following category of $U(\frg_K)$-modules\footnote{this is an adaptation of the {\rm BGG}-category $\cO$ to our setting in which the coefficient field is not algebraically closed}: objects are $U(\frg_K)$-modules $M$ satisfying the following properties:

\begin{enumerate}
\item $M$ is finitely generated as $U(\frg_K)$-module.\smallskip
\item $M$ decomposes as a direct sum of one-dimensional $U(\frt_K)$-modules. \smallskip
\item The action of $\frb_K$ on $M$ is locally finite, i.e. for every $m \in M$, the subspace $U(\frb_K)\cdot m \subset M$ is finite-dimensional over $K$.
\end{enumerate}

\noindent For a standard parabolic subalgebra $\frp \supset \frb$ we let $\cO^\frp$ be the subcategory of those modules $M$ in $\cO$ on which $\frp_K$ acts locally finitely. In $\cO$ we consider the subcategory $\cO_\alg$ of modules $M$ which have the additional property: 


(4) All weights of $\frt_K$ on $M$ lie in the image of the natural map $X^*(\bT) = \Hom(\bT,\bbG_m) \ra \frt_K^* = \Hom_L(\frt,K)$.

\vskip4pt
\noindent We will show, cf. \ref{T-alg implies L-alg}, that modules $M$ in $\cO_\alg$ on which $\frp_K$ acts locally finitely have the property that the $\frp_K$-action lifts uniquely to a locally finite-dimensional algebraic $\bP_K$-representation, and we denote this subcategory by $\cO^\frp_\alg$, i.e., $\cO^\frp_\alg = \cO_\alg \cap \cO^\frp$. 

Given a module $M$ in $\cO^\frp_\alg$, there is a finite-dimensional $K$-subspace $W \sub M$ which is stable under $U(\frp_K)$ and which generates $M$ as $U(\frg_K)$-module. Let $\frd$ be the kernel of the canonical surjection $U(\frg_K) \otimes_{U(\frp_K)} W \ra M$,
and let $W'$ be the dual representation. By what we have said above, $W'$ lifts uniquely to an algebraic representation of $P$. Then we consider the locally analytic induced representation
$$\Ind^G_P(W') = \{f \in C^{an}(G,W') \midc \forall g \in G, p \in P: \, f(gp) = p^{-1}\cdot f(g) \, \} \,,$$

\noindent where $C^{an}(G,W')=C_L^{an}(G,W')$ is the $K$-vector space of $L$-locally analytic $W'$-valued functions on $G$. The action of $G$ on $\Ind^G_P(W')$ is given by left translation: $(g \cdot f)(x) = f(g^{-1}x)$. There is a $C^{an}(G,K)$-valued pairing

$$\begin{array}{rccc}
\langle \cdot , \cdot \rangle_{C^{an}(G,K)}: & \left(U(\frg_K) \otimes_{U(\frp_K)} W \right) \otimes_K \Ind^G_P(W') & \lra & C^{an}(G,K) \\
&&&\\
& (\fry \otimes w) \otimes f & \mapsto & \Big[ g \mapsto \big((\fry \cdot_r f)(g)\big)(w)\Big]
\end{array}$$

\vskip2pt

\noindent where, for $\frx \in \frg$ we have, by definition,
$$(\frx \cdot_r f)(g) = \frac{d}{dt}f(g\exp(t\frx))|_{t=0} \,.$$

\vskip2pt
\noindent The common kernel of all $\langle \frz, \cdot \rangle_{C^{an}(G,K)}$, with $\frz \in \frd$, is a closed subrepresentation

\vspace{-0.3cm}
$$\cF^G_P(M) = \Ind^G_P(W')^\frd = \{ f \in \Ind^G_P(W') \midc \mbox{ for all } \frz \in \frd : \langle \frz, f \rangle_{C^{an}(G,K)} = 0 \} \,.$$

\vskip2pt

\noindent More generally, for a smooth admissible $L_P$-representation $V$ we define the representation

\vspace{-0.3cm}
$$\cF^G_P(M,V) = \Ind^G_P(W' \otimes_K V)^\frd = \{ f \in \Ind^G_P(W' \otimes_K V) \midc \mbox{ for all } \frz \in \frd : \langle \frz, f \rangle_{C^{an}(G,V)} = 0 \}$$


\noindent similarly as before (here, the pairing $\langle \cdot , \cdot \rangle_{C^{an}(G,V)}$ takes values in $C^{an}(G,V)$). The representation $\cF^G_P(M,V)$ is an admissible locally analytic representation.


Let $M$ be an object of $\cO^\frp$. We call the parabolic subalgebra $\frp$ {\it maximal for $M$}, if $M$ does not lie in a subcategory $\cO^\frq$ with a parabolic subalgebra $\frq$ properly containing $\frp$. It follows from \cite[sec. 9.4]{H2} that for every object $M$ of $\cO^\frp$ there is unique parabolic subalgebra $\frq \supset \frp$ which is maximal for $M$.
The same definition applies for objects in the subcategory $\cO^\frp_{\alg}$ in which case we also say that the parabolic subgroup $P$ {\it maximal for $M$}.


The main result of this paper is the following:

\vskip5pt

\noindent {\bf Theorem.} {\it (i) $\cF^G_P$ is functorial in both arguments: contravariant in $M$ and covariant in $V$.

\vskip5pt

\noindent (ii) $\cF^G_P$ is exact in both arguments.

\vskip5pt

\noindent (iii) If $Q \supset P$ is a parabolic subgroup and $M$ is an object of $\cO^\frq_\alg$ 
(and hence, in particular, an object of $\cO^\frp_\alg$), then

\vspace{-0.3cm}
$$\cF^G_P(M,V) = \cF^G_Q(M,\ind^{L_Q}_{L_P(L_Q \cap U_P)}(V))$$


\noindent for all smooth admissible $L_P$-representations $V$. Here, $\ind^{L_Q}_{L_P(L_Q \cap U_P)}(V)=\ind^Q_P(V)$ denotes the smooth induction of the representation $V$.

\vskip2pt

\noindent (iv) $\cF^G_P(M,V)$ is topologically irreducible if and only if the following conditions are both satisfied:

\vskip5pt

\noindent - $M$ is simple,

\noindent - if $Q \supset P$ is maximal for $M$, then the smooth representation $\ind^{L_Q}_{L_P(L_Q \cap U_P)}(V)$ is irreducible 
as an $L_Q$-representation.

\vskip5pt

\noindent In (iv) we assume that the residue field characteristic $p$ of $L$ is odd, if the root system $\Phi = \Phi(\frg,\frt)$ has irreducible components of type $B$, $C$ or $F_4$, and if $\Phi$ has irreducible components of type $G_2$ we assume that $p > 3$.}

\vskip5pt

\noindent  {\it Remark.} The assumptions on the residue field characteristic $p$ of $L$ are imposed because of our method of proof, but might not be necessary. That is, we do not know of any example where the fourth statement fails because $p \le 3$.

\vskip5pt

Our main result covers a particular case of the irreducibility result shown in \cite{OS}.
In loc.cit. we consider arbitrary reductive groups and arbitrary locally analytic finite-dimensional $L_P$-representations $W$, but where, on the other
hand, no differential equations appear.
Using as an input Jordan-H\"older series for $M$ and $V$, and for possibly occurring smooth induced representations of the form $\ind^{L_Q}_{L_P(L_Q \cap U_P)}(V)$, it is a rather formal exercise to determine a Jordan-H\"older series for $\cF^G_P(M,V)$, cf. section \ref{Composition series}. Indeed, we concretize the determination of the composition factors for locally analytic representations of the form $\Ind^G_P(W')^\frd$. Finally, we study for a homogenous line bundle $\cL$ on projective space $\bbP^d_k$ and the Drinfeld space $\cX\subset \bbP^d_k$, the space of sections $H^0(\cX,\cL)$ which is a Fr\'echet space equipped with a continuous action of $G=\GL_{d+1}(K)$. More precisely, we consider the filtration
$H^0(\cX,\cL)=\cL(\cX)^0 \supset \cL(\cX)^{1} \supset \cdots \supset \cL(\cX)^{d-1} \supset \cL(\cX)^d =  H^0(\bbP^d,\cL)$ constructed in \cite{O} consisting of closed $G$-subspaces. We show that for $i\geq 1,$ the dual space $(\cL(\cX)^{i}/\cL(\cX)^{i+1})'$ is a locally analytic $G$-representation which lies in the image of the functor $\cF^{G}_{P_{(i+1,d-i)}}$. Here $P_{(i+1,d-i)}$ is the maximal standard parabolic subgroup
corresponding to the decomposition $(i+1,d-i)$ of $d+1$.
On the other hand, the dual of $\cL(\cX)^{0}/\cL(\cX)^{1}$ sits inside an extension
$$ 0 \rightarrow v^{G}_{B} \otimes_K H^{d}(\bbP^d_K,\cL)' \rightarrow (\cL(\cX)^{0}/\cL(X)^{1})' \rightarrow  \cF^{G}_{P_{(1,d)}}(M) \rightarrow 0 $$
for some simple object $M\in \cO^{\frp_{(1,d)}}.$ Here $v^G_B$ is the smooth Steinberg representation and $H^{d}(\bbP^d_K,\cL)$ is a simple finite-dimensional algebraic $G$-module.  By using our main result, we prove that all the locally analytic representations above are irreducible.


Independently of our approach, O. Jones considers in \cite{Jo} the relation between the Bernstein-Gelfand-Gelfand resolution and 
locally analytic principal series representations of $G$ (and, more generally, of subgroups of $G$ possessing an Iwahori decomposition). 
In \cite[Thm. 26]{Jo} he proves the existence of an exact sequence which coincides with the exact sequence \ref{lcBGG} in 
paragraph \ref{locanBGG}. The results about irreducibility in section \ref{irredresults} can then be used to decide if this 
sequence is actually a composition series, and, if not, how it can be refined to give such a series.

\vskip12pt

{\it Notation and conventions:} We denote by $p$ a prime number and consider fields $L \sub K$ which are both finite extensions of $\Qp$. 
Let $O_L$ and $O_K$ be the rings of integers of $L$, resp. $K$, and let $|\cdot |_K$ be the absolute value on $K$ such that $|p|_K = p^{-1}$. The field $L$ is our ''base field'', whereas we consider $K$ as our ''coefficient field''. For a locally convex $K$-vector space $V$ we denote by $V'_b$ its strong dual, i.e., the $K$-vector space of continuous linear forms equipped with the strong topology of bounded convergence. Sometimes, in particular when $V$ is finite-dimensional, we simplify notation and write $V'$ instead of $V'_b$. All finite-dimensional $K$-vector spaces are equipped with the unique Hausdorff locally convex topology.

We let $\bG_0$ be a split reductive group scheme over $O_L$ and $\bT_0 \sub \bB_0 \sub \bG_0$ a maximal split torus and a Borel subgroup scheme, respectively. We denote by $\bG$, $\bB$, $\bT$ the base change of $\bG_0$, $\bB_0$ and $\bT_0$ to $L$. By $G_0 = \bG_0(O_L)$, $B_0 = \bB_0(O_L)$, etc., and $G = \bG(L)$, $B = \bB(L)$, etc., we denote the corresponding groups of $O_L$-valued points and $L$-valued points, respectively. Standard parabolic subgroups of $\bG$ (resp. $G$) are those which contain $\bB$ (resp. $B$). For each standard parabolic subgroup $\bP$ (or $P$) we let $\bL_\bP$ (or $L_P$) be the unique Levi subgroup which contains $\bT$ (resp. $T$). Finally, Gothic letters $\frg$, $\frp$, etc., will denote the Lie algebras of $\bG$, $\bP$, etc.: $\frg = \Lie(\bG)$, $\frt = \Lie(\bT)$, $\frb = \Lie(\bB)$, $\frp = \Lie(\bP)$, $\frl_P = \Lie(\bL_\bP)$, etc.. Base change to $K$ is usually denoted by the subscript ${}_K$, for instance, $\frg_K = \frg \otimes_L K$.


We make the general convention that we denote by $U(\frg)$, $U(\frp)$, etc., the corresponding enveloping algebras, {\it after base change to $K$}, i.e., what would be usually denoted by $U(\frg) \otimes_L K$, $U(\frp) \otimes_L K$ etc.
Similarly, we use the abbreviations $D(G) = D(G,K), D(P) = D(P,K)$ etc. for the locally $L$-analytic distributions with values in $K.$

\vskip10pt

{\it Acknowledgments.} We thank O. Gabber, G. Henniart, V. Mazorchuk, T. Schmidt and B. Schraen for some helpful remarks. Furthermore, we thank the referees for their comments which helped us to improve the paper in a number of places.

\section{Preliminaries on locally analytic representations and $\frg$-modules}

\setcounter{enumi}{0}

\begin{para}\label{locanrep} {\it Locally analytic representations.}  We start with recalling some basic facts on locally analytic representations as introduced by Schneider and Teitelbaum \cite{ST1}.

For a locally $L$-analytic group $H$, let $C^{an}(H,K)$ be the locally convex vector space of locally $L$-analytic $K$-valued functions. More generally, if $V$ is a Hausdorff locally convex $K$-vector space, let $C^{an}(H,V)$ be the $K$-vector space consisting of locally analytic functions with values in $V$. It has the structure of a Hausdorff locally convex vector space, as well. The dual space $D(H) = D(H,K) = C^{an}(H,K)'$ is a $K$-algebra, and the multiplication (convolution) is separately continuous. It has the structure of a Fr\'echet-Stein algebra when $H$ is compact \cite{ST2}. Here the (convolution) product of $\delta_1,\delta_2\in D(H)$ is defined by

\vspace{-0.3cm}
\begin{numequation}\label{convolution}
\delta_1 \cdot \delta_2(f) =
(\delta_1 \otimes \, \delta_2)((h_1,h_2) \mapsto f(h_1h_2))
\,.
\end{numequation}

Let $V$ be a Hausdorff locally convex $K$-vector space. A  homomorphism $\rho: H \ra \GL_K(V)$
(or simply  $V$) is called a {\it locally analytic representation} of $H$
if the topological $K$-vector space $V$ is barrelled, the action of $H$ on $V$ is by continuous automorphisms, and the orbit maps $\rho_v: H \rightarrow V, \; h \mapsto \rho(h)(v)$, are
elements in $C^{an}(H,V)$ for all $v \in V$. We recall that a Hausdorff locally convex $K$-vector space $V$ is called of {\it compact type}, if it is an inductive limit of countably many Banach spaces with injective and compact transition maps, cf. \cite[sec. 1]{ST1}. In this case, the strong dual $V'_b$ is a nuclear Fr\'echet space (by \cite[16.10, 19.9]{S1}) and a separately continuous left $D(H)$-module. By \cite[3.3]{ST1}, the duality functor gives an anti-equivalence of categories

\begin{numequation}\begin{array}{ccc}\label{equivalence}

\left\{\begin{array}{c}
\mbox{locally analytic $H$-represen-} \\
\mbox{tations on $K$-vector spaces } \\
\mbox{of compact type with } \\
\mbox{continuous linear $H$-maps }
\end{array} \right\}

& \stackrel{\sim}{\longrightarrow} &

\left\{\begin{array}{c}
\mbox{separately continuous $D(H)$-}  \\
\mbox{modules on nuclear Fr\'echet} \\
\mbox{spaces with continuous  } \\
\mbox{$D(H)$-module maps}
\end{array} \right\}.

\end{array}
\end{numequation}

\vskip8pt

\noindent In particular, $V$ is topologically irreducible if $V'_b$ is a simple $D(H,K)$-module. We also recall that a locally analytic $H$-representation $V$ is called {\it strongly admissible} if $V$ is of compact type and if its strong dual $V'_b$ is a finitely generated $D(H_0)$-module for any (equivalently, one) compact open subgroup $H_0 \sub H$, cf. \cite[sec. 3]{ST1}.

\vskip8pt

\end{para}

\begin{para}\label{indlocanrep} {\it Induced locally analytic representations.} For any closed subgroup $H'$ of $H$ and any locally analytic
representation $V$ of $H'$, we denote by $\Ind^H_{H'}(V)$ the
induced locally analytic representation. It is defined by
$$\Ind^H_{H'}(V) := \Big\{f \in C^{an}(H,V) \midc \forall h' \in H', \forall h \in H: f(h \cdot
h') = (h')^{-1} \cdot f(h) \; \Big\} \;.$$


\noindent The group $H$ acts on this vector space by $(h \cdot f)(x) = f(h^{-1}x)$. We will frequently consider the strong dual space of $\Ind^H_{H'}(V)$. In this paper, $V$ will always be of compact type and $H/H'$ will be compact. As recalled above, cf. \ref{locanrep}, $V'_b$ is then a nuclear Fr\'echet space.

\end{para}

\setcounter{enumi}{0}

\begin{lemma}\label{finitediml} Assume that $H$ and $H'$ are compact $p$-adic Lie groups. If $V$ is a finite-dimensional locally analytic $H'$-representation, then $(\Ind^H_{H'} V)'_b$ is canonically isomorphic, as  $D(H)$-module, to $D(H) \otimes_{D(H')} V'$.
\end{lemma}

\noindent \Pf Consider the canonical map of $D(H)$-modules

\vspace{-0.3cm}
\begin{numequation}\label{dual_iso}
D(H) \otimes_{D(H')} V' \ra (\Ind^H_{H'}V)'_b \,, \,\,
\delta \otimes \vphi \mapsto \delta \cdot \vphi \,,
\end{numequation}

\vspace{-0.3cm}
\noindent where $(\delta \cdot \vphi)(f) = \delta(g \mapsto \vphi(f(g^{-1})))$. Because $H'$ is a compact $p$-adic Lie group, it is topologically finitely generated (cf. \cite[8.34]{DDMS}). Because $V$ is finite-dimensional the same arguments as in \cite{ST5}, before Lemma 6.1, show that \ref{dual_iso} is an isomorphism of topological vector spaces, if we give the left side the quotient topology of the projective tensor product topology on $D(H,K) \otimes_K V'$. (We remark that the projective and inductive tensor product topologies coincide for tensor products of Fr\'echet spaces, cf. \cite[17.6]{S1}.)  \qed

We recall that in this paper $G = \bG(L)$ denotes the group of $L$-valued points of a split reductive algebraic group over $L$. We use the notation as fixed at the end of the introduction. In particular, $G_0 = \bG_0(O_L)$ is the subgroup of $O_L$-valued points of a smooth model of $\bG$ over $O_L$.

Furthermore, every rational $\bG$-representation and every smooth $\bG(L)$-representation may be considered as a locally analytic representation, cf. \cite[\S 2]{ST4}. In the latter case the underlying vector space is the locally convex inductive limit of its finite-dimensional subspaces, and it is of compact type, if we assume that the smooth representation is admissible (because it is then of countable dimension).

\vskip8pt

\setcounter{enumi}{0}

\begin{lemma}\label{tensorprod} Let $P \sub G$ be a parabolic subgroup and put $P_0 = P \cap G_0$. Let $V$ be a smooth representation of the Levi subgroup $L_P$, and let $E$ be a finite-dimensional algebraic $P$-representation. Both $V$ and $E$ are assumed to be $K$-vector spaces, and $E'$ denotes the contragredient representation. 

(i) If $V$ is an admissible smooth representation, then $\Ind^G_P(E' \otimes_K V)$ is an admissible locally analytic representation\footnote{in the sense of \cite[sec. 6]{ST2}}.

(ii) If $V$ is of finite length, then $\Ind^G_P(E' \otimes_K V)$ is strongly admissible.

\end{lemma}

\noindent \Pf (i) Assume first that the $P$-representation $E$ comes by inflation from an $L_P$-representation (i.e., the unipotent radical $U_P$ of $P$ acts trivially on $E$). If $V$ is a smooth admissible representation, then it is admissible as a locally analytic representation by \cite[6.6]{ST2}. This implies that $E' \otimes_K V$ is an admissible $L_P$-representation by \cite[6.1.5]{Em1}, and by \cite[2.1.2]{Em2} we can conclude that $\Ind^G_P(E' \otimes_K V)$ is an admissible locally analytic representation. To treat the general case 
(when $U_P$ does not necessarily act trivially on $E$), we note first that we can find a descending filtration $E = E_0 \supset E_1 \supset \ldots \supset E_r = 0$ by $P$-stable subrepresentations $E_i$ such that $U_P$ acts trivially on the associated graded vector space $\bar{E} = \bigoplus_{i=0}^{r-1} E_i/E_{i+1}$. We have already proved the assertion in the case $r=1$. Arguing inductively, it suffices to treat the case $r=2$. 
Consider the extension $0 \ra E_1 \ra E \ra E/E_1 \ra 0$, which gives $0 \ra (E/E_1)' \ra E' \ra E_1' \ra 0$ by passing to dual spaces. 
As these are finite-dimensional, this sequence splits as topological vector spaces, and the same is true for the sequence that we get by 
tensoring with $V$:
$$0 \lra (E/E_1)' \otimes_K V \lra E' \otimes_K V \lra E_1' \otimes_K V \lra 0 \;.$$

\noindent By \cite[5.1, 5.4]{K2} this gives an exact sequence of locally analytic representations
$$0 \lra  \Ind^G_P((E/E_1)' \otimes_K V) \lra  \Ind^G_P(E' \otimes_K V) \lra \Ind^G_P(E_1' \otimes_K V)  \ra  0 \;.$$

\noindent As we pointed out above, the representations on the left and on the right are admissible. Then the representation in the middle is admissible as well, because the category of admissible representations is closed under extensions, cf. \cite[p. 304]{ST5}, \cite[Remark 3.2]{ST2}.

(ii) Any admissible representation that is an extension of two strongly admissible representations is strongly admissible as well. 
Therefore, by the same d\'evissage as above, we may assume that the action of $P$ on $E$ comes by inflation from an action of $L_P$. 
Because $V$ is now assumed to be of finite length, the dual space $V'$ is a finitely generated $D(P_0)$-module, by \cite[2.2]{ST4}. By \cite[3.3]{ST4}, this implies that also $E \otimes_K V'$ is a finitely generated $D(P_0)$-module. Therefore, $E' \otimes_K V$ is a strongly admissible $L_P$-representation. By \cite[2.1.2]{Em2}, the induced representation $\Ind^G_P(E' \otimes_K V)$ is strongly admissible.
\qed


\setcounter{enumi}{0}

\begin{para} {\it The category $\cO$ and its parabolic variants $\cO^\frp$.}
Let $\bG$ be a connected split reductive group over $L$. We use the notation as introduced at the end of the introduction. In particular, we take care to distinguish between vector spaces (and schemes) over our ''base field'' $L$ and our ''coefficient field'' $K$. The subscript ${}_K$ denotes base change to $K$. However, when dealing with universal enveloping algebras, we make the general convention to write $U(\frg)$, $U(\frb)$ etc. to denote the corresponding universal enveloping algebras {\it after base change to $K$}, i.e., what is precisely $U(\frg_K)$, $U(\frb_K)$ and so on.


The category $\cO$ in the sense of Bernstein, Gelfand, Gelfand, cf. \cite{BGG}, \cite{H1}, is defined for complex semi-simple Lie algebras. Here we consider the following variant for split reductive Lie algebras over a field of characteristic zero. Thus we let $\cO$ be the full subcategory of all $U(\frg)$-modules $M$ which satisfy the following properties:

\begin{enumerate}
\item $M$ is finitely generated as a $U(\frg)$-module. \smallskip
\item $M$ decomposes as a direct sum of one-dimensional $\frt_K$-representations. \smallskip
\item The action of $\frb_K$ on $M$ is locally finite, i.e. for every $m \in M$, the subspace $U(\frb)\cdot m \subset M$ is finite-dimensional over $K$.
\end{enumerate}

\noindent As in the classical case one shows that $\cO$ is a $K$-linear, abelian, noetherian, artinian category which is closed under submodules and quotients, cf. \cite[1.1, 1.11]{H1}. In particular, every object of $\cO$ has a Jordan-H\"older series.

In our paper we are mainly interested in a certain subcategory of $\cO$. The reason will become clear later on. Note that by property (2), we may write any object $M$ in $\cO$ as a direct sum
\begin{equation}\label{eigenspace}
M=\bigoplus_{\lambda \in \frt^\ast_K} M_\lambda
\end{equation}
where $M_\lambda=\{m\in M \mid \forall \frx \in \frt_K: \frx \cdot m = \lambda(\frx) m  \}$ is the $\lambda$-eigenspace attached to $\lambda \in \frt^\ast_K =  \Hom_K(\frt_K,K)$. Let $X^\ast(\bT) = \Hom(\bT,\bbG_m)$ be the group of characters of the torus $\bT$ which we consider via the derivative as a subgroup of $\frt^\ast_K$.
\end{para}

\begin{dfn} We denote by $\cO_\alg$ the full subcategory of $\cO$
whose objects are $U(\frg)$-modules such that all $\lambda$ appearing in \ref{eigenspace}, for which $M_\lambda \neq 0$, are contained in $X^\ast({\bf T})\subset \frt^\ast_K$.
\end{dfn}

Thus $M\in \cO$ is an object of $\cO_\alg$ if the $\frt_K$-module structure on every $M_\lambda$ lifts to an algebraic action of $\bT$.
Again, $\Oa$ is an abelian noetherian, artinian category which is closed under submodules and quotients.
The Jordan-H\"older series of a given $U(\frg)$-module lying in $\Oa$ is the same as the one considered in the category $\cO.$

\begin{exam}
For $\lambda\in \frt^\ast_K$, let $K_\lambda=K$ be the 1-dimensional $\frt_K$-module where the action is given by $\lambda$. Then 
$K_\lambda$ extends uniquely to a $\frb_K$-module.  Let
$$M(\lambda)=U(\frg) \otimes_{U(\frb)} K_\lambda \in \cO$$
\noindent be the corresponding Verma module. Denote by $L(\lambda)\in \cO$  its simple quotient.
Then $M(\lambda)$ resp. $L(\lambda)$ is an object of $\Oa$ if and only if  $\lambda \in X^\ast({\bf T}).$
\end{exam}

Let ${\bf P}$ be a standard parabolic subgroup of ${\bf G}$ with Levi decomposition ${\bf P=L_P \cdot U_P}$ where ${\bf T\subset L_P}$. We put $\fru_P = \Lie(\bU_\bP)$ and $\frl_P = \Lie(\bL_\bP)$. We define $\cO^\frp$ to be the category of $U(\frg)$-modules $M$ satisfying the following properties:

\begin{enumerate}
\item $M$ is finitely generated as a $U(\frg)$-module. \smallskip
\item Viewed as a $\frl_{P,K}$-module, $M$ is the direct sum of finite-dimensional simple modules. \smallskip
\item The action of $\fru_{P,K}$ on $M$ is locally finite.
\end{enumerate}

This is analogous to the definition over an algebraically closed field, cf. \cite[ch. 9]{H1}. Clearly, the category $\cO^\frp$ is a full subcategory of $\cO$. Furthermore, it is $K$-linear, abelian and closed under submodules and quotients, cf. \cite[9.3]{H1}. Hence the Jordan-H\"older series of every $U(\frg)$-module in $\cO^\frp\subset \cO$ lies in $\cO^\frp$ as well. If $Q$ is a standard parabolic subgroup with $Q\supset P$, then  $\cO^\frq \subset \cO^\frp.$ Finally, consider the extreme case $\frp=\frg$: the category $\cO^\frg$ consists of all finite-dimensional semi-simple $\frg_K$-modules. On the other hand, $\cO^\frb = \cO$.


Similarly as before we define a subcategory $\cO^\frp_\alg$ of $\cO^\frp$ as follows. Let ${\rm Irr}(\frl_{P,K})^{\rm fd}$ be the set of isomorphism classes of finite-dimensional irreducible $\frl_{P,K}$-modules.
Again, any object in $\cO^\frp$ has by property (2) a decomposition into $\frl_{P,K}$-modules
\begin{numequation}\label{isotypical_decom}
M= \bigoplus_{\fra \in {\rm Irr}(\frl_{P,K})^{\rm fd}} M_{\fra}
\end{numequation}
\noindent where $M_{\fra} \sub M$ is the $\fra$-isotypic part of the representation $\fra$. We denote by $\cO^\frp_\alg$ the full subcategory of $\cO^\frp$
consisting of objects $M$ of $\cO^\frp$ with the following property: if $M_\fra \neq 0$ (with the notation as in \ref{isotypical_decom}), then $\fra$ is the Lie algebra representation induced by a finite-dimensional algebraic $\bL_{\bP,K}$-representation, where $\bL_{\bP,K} = \bL_\bP \times_{\Spec(L)} \Spec(K)$.
Again, the category $\cO^\frg_\alg$ is contained in $\Oa$ and contains all finite-dimensional $\frg_K$-modules which are induced by algebraic ${\bf G}$-modules.
Every object in $\cO^\frp_\alg$ has a Jordan-H\"older series which coincides with the Jordan-H\"older series in $\Oa$. The following lemma will later play a crucial role.

\begin{lemma}\label{T-alg implies L-alg} Let $M$ be an object of $\cO^\frp$. Then $M$ is in $\cO^\frp_\alg$ if and only it is in $\cO_\alg$.
\end{lemma}

\noindent \Pf This follows from the discussion in section 1.20 of part II in \cite{Ja}. It is shown there that a finite-dimensional $U(\frl_P)$-module which is equipped with an algebraic $\bT$-action, whose induced $U(\frt)$-structure coincides with the one coming from the $U(\frl_P)$-structure, lifts (uniquely) to an algebraic $\bL_{\bP,K}$-representation. Note that the algebra ${\rm Dist}(\bL_{\bP,K})$ used in loc.cit. is in characteristic zero canonically isomorphic to the enveloping algebra $U(\frl_{P,K})$, cf. \cite[part I, 7.10]{Ja}. Apart from the condition that the induced $U(\frt)$-structure coincides with the one coming from the $U(\frl_P)$-structure, Jantzen requires the following compatibility between the $\bT$-action and $U(\frl_P)$-action:
$${\rm Ad}(t)(\frx) \cdot m  = t \cdot (\frx \cdot (t^{-1} \cdot m)) \;,$$


\noindent for all $m \in M$, $t \in \bT$ and $\frx \in \frl_{P,K}$. In our situation this is automatic: we may assume $m \in M_\lambda$ and $\frx \in \frg_\alpha$. This implies that $\frx \cdot m \in M_{\lambda+\alpha}$. Therefore ${\rm Ad}(t)(\frx) \cdot m = \alpha(t) (\frx \cdot m)$, and $t \cdot (\frx \cdot (t^{-1} \cdot m)) = (\alpha + \lambda)(t) \lambda(t^{-1}) \cdot (\frx \cdot m) = \alpha(t) (\frx \cdot m)$. \qed

\begin{rmk} The preceding lemma can also be proved by using the description of irreducible algebraic representations of $\bL_\bP$ in terms of characters of $\bT$.
\end{rmk}

\setcounter{enumi}{0}

\begin{exam}\label{Example_Verma}
Let $\Delta$ be the set of simple roots of ${\bf G}$ with respect to ${\bf T\subset B}$.
Let $\lambda \in X({\bf T})^\ast$ and set $I = \{\alpha \in \Delta \midc \langle \lambda, \alpha^\vee \rangle \in \bbZ_{\ge 0} \}.$
Let ${\bf P}={\bf P}_I$ be the standard parabolic subgroup of ${\bf G}$ attached to $I.$  Then $\lambda$ is dominant with respect to the Levi subgroup ${\bf L_P}.$ Denote by   $V_I(\lambda)$  the corresponding irreducible finite-dimensional algebraic  ${\bf L_P}$-representation, cf. \cite[Pt. 2, 2.13]{Ja}.
We consider it as a ${\bf P}$-module by letting act ${\bf U_P}$ trivially on it.
The generalized parabolic Verma module (in the sense of Lepowsky \cite{Le}) attached to the weight $\lambda$  is given by
$$M_I(\lambda)=U(\frg) \otimes_{U(\frp_I)} V_I(\lambda).$$
Then $M_I(\lambda)$ is an object of $\cO^\frp_\alg.$
Further, there is a surjective map
$$M(\lambda) \rightarrow M_I(\lambda),$$
where the kernel is given by the image of $\oplus_{\alpha \in I} M(s_\alpha\cdot \lambda) \rightarrow M(\lambda)$.
It follows that $L(\lambda)$ is an object of $\cO^\frp_\alg$, as well, cf. \cite[sec. 9.4]{H1}.
\end{exam}


\section{From $\cO_\alg$ to locally analytic representations}

\setcounter{enumi}{0}

\begin{para} {\it The representation associated to an object of $\cO_\alg$.} We fix as in the previous chapter a standard parabolic subgroup ${\bf P}$ with Levi decomposition ${\bf P=L_P \cdot U_P}$ where ${\bf T \subset L_P}$. Let $M$ be an object of $\cO^\frp_\alg$.
By the defining properties (1)-(3) for $\cO^\frp$, we may choose a finite-dimensional representation $W \sub M$ of $\frp_K$ which generates $M$ as $U(\frg)$-module. Thus we have a short exact sequence of $U(\frg)$-modules

\vspace{-0.3cm}
\begin{numequation}\label{basicsequence}
0 \ra \frd \ra U(\frg) \otimes_{U(\frp)} W \ra M \ra 0 \,,
\end{numequation}

\vspace{-0.3cm}
\noindent where, by definition, $\frd$ is the kernel of the canonical map $U(\frg) \otimes_{U(\frp)} W \ra M$. We denote the representation of $\frp_K$ on $W$ by $\rho$. It is induced by an algebraic $\bP_K$-representation by the following lemma.
\end{para}


\setcounter{enumi}{0}

\begin{lemma}\label{lemma_lift_alg} In the setting above, the representation $\rho$ lifts uniquely to an algebraic $\bP_K$-representation on $W$ (which we denote again by $\rho$).
\end{lemma}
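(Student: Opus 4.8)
The plan is to reduce the statement to the corresponding fact about the torus $\bT$ together with an integrability criterion for Lie algebra representations. Since $W \sub M$ is a $\frp$-submodule of an object of $\cO^\frp_\alg$, it decomposes as a $\frt$-module into weight spaces $W = \bigoplus_\lambda W_\lambda$ with all $\lambda \in X^\ast(\bT)$, by the defining property of $\cO_\alg$ (and $\cO^\frp_\alg \sub \cO_\alg$). Hence the restriction of $\rho$ to $\frt$ exponentiates uniquely to an algebraic action of $\bT$: on each weight space $W_\lambda$ the torus acts through the character $\lambda$. So the only issue is to upgrade this, consistently with the $\frt$-action, to an algebraic action of the whole parabolic $\bP = \bL_P \cdot \bU_P$.

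First I would treat the Levi factor $\bL_P$. Since $W$ is finite dimensional and $\frl_P$ acts locally finitely and semisimply (being a $\frp$-submodule of an object of $\cO^\frp$), $W$ is a semisimple $\frl_P$-module, so it suffices to treat an irreducible $\frl_P$-summand; but by the definition of $\cO^\frp_\alg$ the $\frl_P$-isotypic pieces of $M$ — and hence those of the submodule $W$ — are induced by finite-dimensional algebraic $\bL_P$-representations, so $\rho|_{\frl_P}$ already integrates to $\bL_P$. Alternatively, one can argue directly: $\bL_P$ is connected reductive, $[\frl_P,\frl_P]$ is semisimple so any finite-dimensional module integrates to the simply connected cover, and the integrality of the $\bT$-weights (which include the central characters of the simple $\frl_P$-summands) is exactly what is needed to descend this to $\bL_P$ itself. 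Either way one gets a unique algebraic action of $\bL_P$ on $W$ whose derivative is $\rho|_{\frl_P}$ and which is compatible with the $\bT$-action.

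Next I would handle the unipotent radical $\bU_P$. The key point is that $\fru_P$ acts on $W$ \emph{locally finitely}, and $W$ is finite dimensional, so $\fru_P$ acts by nilpotent operators: indeed each $\frt$-weight vector is shifted by root vectors $X_\alpha$ ($\alpha$ a positive root not in the Levi) to strictly higher weights, and since only finitely many weights occur in $W$, every element of $\fru_P$ acts nilpotently. Hence $\rho|_{\fru_P}$ is a nilpotent representation of the nilpotent Lie algebra $\fru_P$, and $\exp$ gives a well-defined algebraic (indeed polynomial) action of the unipotent group $\bU_P$ on $W$ integrating it; this is unique because $\bU_P$ is connected and unipotent in characteristic zero, where $\exp\colon \fru_P \to \bU_P$ is an isomorphism of varieties. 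The compatibility of the $\bL_P$- and $\bU_P$-actions — i.e. that together they define an action of the semidirect product $\bP$ — follows by differentiating: the bracket relations $[\frl_P,\fru_P]\sub \fru_P$ integrate to the conjugation action, and since both sides are algebraic and agree to first order on the connected group $\bP$, they agree. Uniqueness of the $\bP$-structure overall is then automatic: any algebraic $\bP$-action on $W$ with derivative $\rho$ restricts to the unique such actions on $\bT$, $\bL_P$ and $\bU_P$, which generate $\bP$.

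The main obstacle is the Levi step, specifically making precise that integrality of the $\bT$-weights suffices to integrate $\rho|_{\frl_P}$ all the way to $\bL_P$ (not just to its simply connected cover times a torus): here one must invoke that the highest weights of the simple $\frl_P$-constituents of $W$ are dominant integral \emph{for $\bL_P$} — which is guaranteed because they occur as $\bT$-weights in $X^\ast(\bT)$ and $\cO^\frp_\alg$ builds these pieces from algebraic $\bL_P$-representations by definition — so the standard highest-weight theory for the reductive group $\bL_P$ (e.g. \cite{Ja}) applies. The $\fru_P$ step and the compatibility are routine characteristic-zero Lie theory. I would phrase the final argument as: restrict the short exact sequence (\ref{basicsequence}) data to note $W$ is an object-theoretic submodule, apply the weight decomposition, integrate on $\bL_P$ via \cite{Ja} and on $\bU_P$ via $\exp$, and assemble.
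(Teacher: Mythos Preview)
Your proposal is correct and follows essentially the same approach as the paper: decompose $\bP = \bL_P \cdot \bU_P$, integrate $\rho|_{\frl_P}$ to $\bL_P$ using that the $\frl_P$-isotypic pieces of $W$ come from algebraic $\bL_P$-representations by definition of $\cO^\frp_\alg$, integrate $\rho|_{\fru_P}$ to $\bU_P$ via the exponential (the paper phrases this as ``$\bU_\bP$ is simply connected'' where you argue directly that $\fru_P$ acts nilpotently), and then check the compatibility $\rho_\bL(h)\rho(u)\rho_\bL(h)^{-1} = \rho(\Ad(h)(u))$ to assemble the $\bP$-action. Your preliminary discussion of the torus and the alternative simply-connected-cover argument for the Levi are extra detail not in the paper, but the core argument is the same.
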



\noindent \Pf  As $M$ is an object of $\cO^\frp$, the $U(\frp)$-module $W$, considered as a $U(\frl_\frp)$-module, decomposes into a 
direct sum of isotypic modules $W_\fra$. Each module $W_\fra$ lifts uniquely to an algebraic representation of 
$\bL_{\bP,K}$ by \ref{T-alg implies L-alg}. We denote this action of $\bL_{\bP,K}$ on $W = \bigoplus_\fra W_\fra$ by $\rho_\bL$. 
Moreover, the action of the Lie algebra $\fru_{\frp,K}$ integrates uniquely to give an algebraic action of $\bU_\bP$ on $W$ as follows. 
Given an element $u = \exp(\frx)\in {\bf U_P}(\overline{K})$, where $\overline{K}$ denotes an algebraic closure of $K$, we define $\rho(u) := \sum_{n \ge 0} \frac{\rho(\frx)^n}{n!}$, where $\rho(\frx)^n = 0$ for $n \gg 0$. These two representations are compatible in the sense that $\rho_\bL(h) \circ \rho(u) \circ \rho_\bL(h^{-1}) = \rho(\Ad(h)(u))$, for $h \in \bL_\bP(\overline{K})$, $u \in \bU_\bP(\overline{K})$. This shows that $\rho$ lifts uniquely to an algebraic representation of $\bP_K$ on $W$. \qed

\vskip8pt

The induced locally analytic representation of $P$ on the dual space $W' = \Hom_K(W,K)$ will be denoted by $\rho'$. We consider the locally analytic induced representation $\Ind_P^G(W')$.
By \ref{tensorprod} the canonical map of $D(G)$-modules \vspace{-0.3cm}

$$D(G) \otimes_{D(P)} W  \lra \left(\Ind^G_P(W')\right)'\,, \hskip10pt \delta \otimes w \mapsto [f \mapsto \delta(f)(w)] \,,$$

\noindent is an isomorphism of topological vector spaces. We thus have a pairing

\vspace{-0.3cm}
\begin{numequation}\label{pair_K}
\langle \cdot \,, \cdot \rangle: \left(D(G) \otimes_{D(P)} W\right)  \otimes_K \Ind^G_P(W') \lra K \,,
\end{numequation}

\vspace{-0.3cm}
\noindent which identifies the left hand side with the topological dual of the right hand side and vice versa. We remark that
the Iwasawa decomposition $G=G_0\cdot B$ shows that as $D(G_0)$-modules, the restriction of $D(G) \otimes_{D(P)} W$ to $D(G_0)$ is isomorphic to $D(G_0) \otimes_{D(P_0)} W$. The pairing \ref{pair_K} is obtained from the following $C^{an}(G,K)$-valued pairing by composition with the evaluation map $C^{an}(G,K) \ra K$, $f \mapsto f(1)$.

\begin{numequation}\label{pair_C(G,K)}
\begin{array}{rccc}
\langle \cdot , \cdot \rangle_{C^{an}(G,K)}: & \left(D(G) \otimes_{D(P)} W \right) \otimes_K \Ind^G_P(W') & \lra & C^{an}(G,K) \\
&&&\\
& (\delta \otimes w) \otimes f & \mapsto & \Big[ g \mapsto \big(\delta \cdot_r (f(\cdot)(w))\big)(g)\Big]
\end{array}
\end{numequation}

\noindent where, by definition, we have $\big(\delta \cdot_r (f(\cdot)(w))\big)(g) = \delta(x \mapsto f(gx)(w))$. The same arguments as in \cite[3.4.1]{OS} show 
that the obvious map

\vspace{-0.3cm}
$$U(\frg) \otimes_{U(\frp)} W \ra D(G) \otimes_{D(P)} W$$


\noindent is injective, so that we consider $U(\frg) \otimes_{U(\frp)} W$ as being contained in the dual space of $\Ind^G_P(W')$. We then denote by $\Ind^G_P(W')^\frd$ the subspace of $\Ind^G_P(W')$ annihilated by $\frd$ via the pairing $\langle \cdot , \cdot \rangle_{C^{an}(G,K)}$:

\vspace{-0.3cm}
\begin{numequation}\label{basicdfn}
\Ind^G_P(W')^\frd = \{ f \in \Ind^G_P(W') \midc \mbox{ for all } \delta \in \frd : \langle \delta, f \rangle_{C^{an}(G,K)} = 0_{C^{an}(G,K)} \} \,.
\end{numequation}

\vspace{-0.3cm}
\noindent (This is exactly as in \cite{ST3}; cf. the definition of the representation before Thm. 8.6. in \cite{ST3}.)
Here $\Ind^G_P(W')^\frd$ is clearly a closed subspace, since the action of $U(\frg)$ is continuous on $\Ind^G_P(W')$. It is $G$-invariant because the pairing $\langle \cdot , \cdot \rangle_{C^{an}(G,K)}$ uses the right translation of $G$ on itself (in the argument of $f$), but the $G$-action on the induced representation is given by left translation.  

\vskip8pt

\setcounter{enumi}{0}

\begin{prop}\label{Prop_admissible}
(i) The representation $\Ind^G_P(W')^\frd$ is a strongly admissible locally analytic $G$-represen-\linebreak tation.

\noindent (ii) The annihilator of $\Ind^G_P(W')^\frd$ in $D(G) \otimes_{D(P)} W$, i.e., the set
$$\{\psi \in D(G) \otimes_{D(P)} W \midc \mbox{ for all } f \in \Ind^G_P(W')^\frd: \langle \psi, f \rangle = 0 \} \,.$$

\noindent is equal to $D(G)\frd$. We therefore have a canonical isomorphism of coadmissible $D(G)$-modules

$$\left( \Ind^G_P(W')^\frd \right)' \cong \left(D(G) \otimes_{D(P)} W \right)/ \, D(G)\frd \,\,.$$

\end{prop}

\vskip6pt

\noindent \Pf (i) The representation $\Ind^G_P(W')$ is strongly admissible by \ref{tensorprod}. By \cite[3.5]{ST1} closed invariant subspaces of strongly admissible representations are strongly admissible again.

(ii) By  \cite[6.3]{ST2}, the admissible subrepresentations of $\Ind^G_P(W')$ are in bijection with the coadmissible quotients of $D(G) \otimes_{D(P)} W$. By \cite[3.6]{ST2} these are given by the closed submodules of $D(G) \otimes_{D(P)} W$. The bijection is given by associating to a closed submodule $J\subset D(G) \otimes_{D(P)} W$ the representation

$$\{f \in \Ind^G_P(W') \midc \mbox{ for all } \psi \in J: \langle \psi, f \rangle = 0 \} \,.$$


\noindent By definition, for $f \in \Ind^G_P(W')$ to lie in $\Ind^G_P(W')^\frd$ is equivalent to satisfy

$$\sum_{i=1}^m \Big((\fry_i \cdot_r f)(g)\Big)(w_i) = 0$$


\noindent for all $g \in G$ and all $\sum_i \fry_i \otimes w_i \in \frd$. By the definition of $\cdot_r$ and the definition of the convolution product \ref{convolution} in $D(G)$  this is equivalent to

$$\sum_{i=1}^m \Big((\delta_g \cdot \fry_i)(f)\Big)(w_i) = 0$$


\noindent for all $g \in G$ and all $\sum_i \fry_i \otimes w_i \in \frd$. Hence $f$ is in $\Ind^G_P(W')^\frd$ if and only if $\langle \delta_g \cdot \frz, f \rangle = 0$, for all $g \in G$ and all $\frz \in \frd$, where $\langle \cdot , \cdot \rangle$ is the $K$-valued pairing in \ref{pair_K}. Because the subspace generated by the Dirac distributions $\delta_g$ is dense in $D(G)$, this is equivalent to saying that $\langle \delta ,f \rangle = 0$,
for all $\delta \in D(G) \frd$. As $\frd$ is again an object of $\cO^\frp_\alg$, it is finitely generated as $U(\frg)$-module. Hence $D(G)\frd$ is finitely generated as $D(G)$-module and therefore closed by \cite[Cor. 3.4, Lemma 3.6]{ST2}. The assertion follows. \qed


\begin{para}\label{fund_constr}{\it Another description.}  We proceed by giving another description of the dual space of $\Ind_P^G(W')^\frd$. Let $M$ be an object of $\cO^\frp_\alg$ as above. Then $M$ is the union of finite-dimensional $\frp_K$-modules. Denote by $X$ one of these finite-dimensional submodules. $X$ lifts by Lemma \ref{lemma_lift_alg} uniquely to an algebraic $\bP_K$-representation. In particular, we can consider $X$ as a locally analytic $P$-representation, and as such it has a unique structure as a $D(P)$-module, where the Dirac distributions act as group elements, cf. \cite[Prop. 3.2]{ST1}, and the sentence before Lemma 3.1 in \cite{ST1}. (Note that we do not consider here the $D(P)$-module structure on the dual space $X'$.) It thus follows that there is a unique $D(P)$-module structure on $M$ which extends the $\frp_K$-module structure and such that the Dirac distributions $\delta_g$ act as group elements $g \in P$.


The next step is to consider the subring $D(\frg,P)$ generated by $U(\frg)$ and $D(P)$ inside $D(G)$. It follows from the proposition below that this ring is equal to $U(\frg)D(P)$, i.e., every element of $D(\frg,P)$ can be written as a finite sum of elements of the form $\frz \cdot \delta$ with $\frz \in U(\frg)$ and $\delta \in D(P)$.
\end{para}

\begin{prop}\label{commute} Let $H \sub G$ be a closed analytic subgroup, and let $\delta \in D(H)$. Then $\delta \cdot U(\frg) \subset U(\frg) \cdot D(H)$. In particular, the smallest subring of $D(G)$ containing $U(\frg)$ and $D(H)$ consists of finite sums $\sum_j \frz_j \cdot \delta_j$ with $\frz_j \in U(\frg)$ and $\delta_j \in D(H)$.
\end{prop}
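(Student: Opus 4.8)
The plan is to reduce the statement to the key commutation relation $\delta \cdot \frx \in U(\frg)\cdot D(H)$ for a single $\frx \in \frg$ and a single $\delta \in D(H)$, and then bootstrap to arbitrary elements of $U(\frg)$ by an induction on the order of differential operators. The second assertion follows formally from the first: once we know $D(H)\cdot U(\frg) \subset U(\frg)\cdot D(H)$, a product of finitely many factors, each lying in $U(\frg)$ or in $D(H)$, can be rewritten by repeatedly moving all the $U(\frg)$-factors to the left and all the $D(H)$-factors to the right, so the subring generated by $U(\frg)$ and $D(H)$ consists exactly of finite sums $\sum_j \frz_j \cdot \delta_j$.

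For the main relation, I would work inside $D(G,K) = C^{an}(G,K)'$ and use the description of the convolution product: for $\mu, \nu \in D(G)$ one has $(\mu \cdot \nu)(f) = \mu\big(g \mapsto \nu(x \mapsto f(gx))\big)$. It suffices to treat $\frx \in \frg \subset D(G)$, acting on a locally analytic function by $(\frx f)(g) = \frac{d}{dt} f(g\exp(t\frx))|_{t=0}$, i.e. as a left-invariant derivation. The point is the standard fact that for $h$ in a group and a left-invariant vector field $\frx$, the right translate of $\frx$ decomposes via the adjoint action: concretely, for $\delta \in D(H)$ one computes $(\delta \cdot \frx)(f)$ and wants to express it as $\sum_i (\frz_i \cdot \delta_i)(f)$ with $\frz_i \in U(\frg)$, $\delta_i \in D(H)$. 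The natural candidate comes from the identity $\exp(t\,\Ad(h)\frx) = h\exp(t\frx)h^{-1}$: differentiating $f(gh\exp(t\frx)) = f(g\,h\exp(t\frx)h^{-1}\,h)$ at $t=0$ suggests that $\delta \cdot \frx = (\Ad(\delta)\frx)\cdot \delta$ in the appropriate sense, where $\Ad(\delta)\frx \in U(\frg)$ is obtained by integrating the adjoint action of $H$ on $\frg$ against $\delta$ — note $\Ad(h)\frx$ lies in $\frg$, so integrating against a distribution lands in a completion, but since $\frg$ is finite-dimensional and the $H$-action on it is algebraic (locally analytic), the integral $\int_H \Ad(h)\frx \, d\delta(h)$ makes sense and lies in $\frg$ itself; more precisely one uses that the map $H \to \frg$, $h \mapsto \Ad(h)\frx$, is locally analytic with finite-dimensional image, so applying $\delta$ componentwise yields an element of $\frg$. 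This gives $\delta \cdot \frx = \frx_\delta \cdot \delta$ with $\frx_\delta \in \frg$ — actually one must be slightly careful and may get $\delta \cdot \frx = \sum_i \frx_i \cdot \delta_i$ with finitely many terms if one expands in a basis of $\frg$.

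From the case $\frx \in \frg$ one proceeds by induction on the length of monomials $\frx_1 \cdots \frx_n \in U(\frg)$: writing $\delta \cdot (\frx_1 \cdots \frx_n) = (\delta \cdot \frx_1)(\frx_2 \cdots \frx_n)$, apply the base case to get $\delta \cdot \frx_1 = \sum_i \frz_i^{(1)} \delta_i$ with $\frz_i^{(1)} \in \frg$, then apply the inductive hypothesis to each $\delta_i \cdot (\frx_2 \cdots \frx_n) \in U(\frg)\cdot D(H)$, and reassemble. Since $U(\frg)$ is spanned by such monomials (PBW), this establishes $\delta \cdot U(\frg) \subset U(\frg)\cdot D(H)$ for all $\delta \in D(H)$.

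The main obstacle, and the step deserving the most care, is making rigorous the identity $\delta \cdot \frx = \frx_\delta \cdot \delta$ (or its finite-sum variant) at the level of distributions rather than group elements: one must justify that the naive computation valid for $\delta = \delta_h$ a Dirac distribution extends by continuity/density to all $\delta \in D(H)$, and that the "twisted" element $\frx_\delta := \int_H \Ad(h)(\frx)\, d\delta(h)$ is well-defined in $\frg$ and depends continuously (even $K$-linearly) on $\delta$. The cleanest route is probably: verify the identity on the dense subspace spanned by Dirac distributions $\delta_h$ (where it is the elementary chain-rule computation above), check that both sides, as functions of $\delta$, are continuous linear maps $D(H) \to D(G)$ — the left side obviously, the right side because $\delta \mapsto \frx_\delta$ is continuous linear $D(H) \to \frg$ (the coordinates of $\Ad(h)\frx$ in a fixed basis of $\frg$ being locally analytic functions of $h$) and multiplication in $D(G)$ is separately continuous — and then conclude by density of the span of the $\delta_h$ in $D(H)$.
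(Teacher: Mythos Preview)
Your overall plan---reduce to a single $\frx \in \frg$, use the adjoint action of $H$ on $\frg$, then induct on the length of PBW monomials---matches the paper's approach, and your deduction of the second assertion from the first is fine. The gap is in the key formula itself.

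The identity $\delta \cdot \frx = \frx_\delta \cdot \delta$ with $\frx_\delta = \int_H \Ad(h)(\frx)\, d\delta(h) \in \frg$ is \emph{false} for general $\delta$. It holds for Dirac distributions $\delta_h$ (where indeed $\delta_h \cdot \frx = \Ad(h)(\frx) \cdot \delta_h$), but for a general $\delta$ the map $\delta \mapsto \frx_\delta \cdot \delta$ is \emph{quadratic} in $\delta$, not linear, while $\delta \mapsto \delta \cdot \frx$ is linear. So your density argument cannot possibly succeed: two maps that agree on Dirac distributions but one of which is not linear will not agree on linear combinations, let alone on the whole of $D(H)$.

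The correct formula, which the paper establishes by a direct computation valid for \emph{all} $\delta$ at once (no density argument), is this: fix a basis $(\frx_i)$ of $\frg$, write $\Ad(h)(\frx) = \sum_i c_i(h)\frx_i$ with locally analytic $c_i$ on $H$, and define new distributions $\delta_i \in D(H)$ by $\delta_i(f) = \delta(h \mapsto c_i(h)f(h))$. Then $\delta \cdot \frx = \sum_i \frx_i \cdot \delta_i$. The crucial point is that $\delta_i$ is \emph{not} the scalar $\delta(c_i)$ times $\delta$; it is $\delta$ twisted by pointwise multiplication by the function $c_i$. This is precisely the ``finite-sum variant'' you allude to without specifying, and once one has the right $\delta_i$, the map $\delta \mapsto \sum_i \frx_i \cdot \delta_i$ \emph{is} linear in $\delta$, so a density argument along your lines would in fact work---but the paper instead just computes $(\delta \cdot \frx)(f)$ directly using the convolution formula and a Fubini-type step, which is cleaner and avoids any continuity discussion.
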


\noindent \Pf Of course, it is enough to show that for any $\frx \in \frg$ we have $\delta \cdot \frx \in \frg \cdot D(H)$. Let $f \in C^{an}(G,K)$. For $g \in G$ we denote by $g.f$ the function defined by $(g.f)(x) = f(g^{-1}x)$.
It is easy to see that the convolution product \ref{convolution} of two distributions $\lambda_1, \lambda_2 \in D(G)$ satisfies

$$(\lambda_1 \cdot \lambda_2)(f) = \lambda_1(g \mapsto \lambda_2(g^{-1}.f)) = \lambda_1(g \mapsto \lambda_2(h \mapsto f(gh)) = \lambda_2(h \mapsto \lambda_1(g \mapsto f(gh))).$$

\vskip6pt

\noindent The last equality may be called ''Fubini's Theorem'', and this is used below in one instance. Furthermore, the image of $\frx$ in $D(G)$ is given by the formula

$$\frx(f) = \lim_{t \ra 0} \frac{1}{t}(f(\exp(t\frx)) - f(1)).$$

\vskip6pt

\noindent For $h \in H$ and $\frx$ as above write $\mbox{Ad}(h)(\frx) = \sum_{i = 1}^dc_i(h)\frx_i$, where $(\frx_i)_i$ is some basis for $\frg$, and the $c_i$ are locally analytic functions on $H$. Define distributions $\delta_i \in D(H)$ by $\delta_i(f) = \delta(h \mapsto c_i(h)f(h))$. Then we compute:
$$\begin{array}{rcl}
(\delta \cdot \frx)(f) & = & \delta(h \mapsto \frx(h^{-1}.f)) \hskip4pt = \hskip4pt \delta(h \mapsto \mbox{Ad}(h)(\frx)(g \mapsto f(gh)))\\
&&\\
& = & \delta\left(h \mapsto (\sum_{i = 1}^dc_i(h)\frx_i)(g \mapsto f(gh))\right) \hskip4 pt = \hskip4pt  \sum_{i = 1}^d \delta\Big(h \mapsto (c_i(h)\frx_i)(g \mapsto f(gh))\Big)\\
&&\\
& = & \sum_{i = 1}^d \delta(h \mapsto \frx_i(g \mapsto c_i(h)f(gh)))  \hskip5pt \stackrel{\mbox{\tiny{Fubini}}}{=} \hskip5pt \sum_{i = 1}^d \frx_i(g \mapsto \delta(h \mapsto c_i(h)f(gh)))\\
&&\\
& = & \sum_{i = 1}^d \frx_i(g \mapsto \delta(h \mapsto c_i(h)(g^{-1}.f)(h))) \hskip4 pt = \hskip4pt \sum_{i = 1}^d \frx_i(g \mapsto \delta_i(g^{-1}.f))\\
&&\\
& = & \sum_{i = 1}^d (\frx_i \cdot \delta_i)(f)\\
\end{array}$$


And this shows that $\delta \cdot \frx = \sum_{i = 1}^d \frx_i \cdot \delta_i$ is in $\frg \cdot D(H)$. \qed

\vskip8pt

\begin{cor}\label{extended structure} There is on any object $M$ of $\cO^\frp_\alg$ a unique $D(\frg,P)$-module structure with the following properties: 

(i) The action of $U(\frp)$, as a subring of $U(\frg)$, coincides with the action of $U(\frp)$ as a subring of $D(P)$. \vskip8pt

(ii) The Dirac distributions $\delta_g \in D(P)$ act like group elements $g \in P$ (the latter action being given by Lemma \ref{lemma_lift_alg}). 

Moreover, any morphism $M_1 \ra M_2$ in $\cO^\frp_\alg$ is automatically a homomorphism of $D(\frg,P)$-modules.
\end{cor}

\noindent \Pf The first assertion about the existence and uniqueness of the $D(\frg,P)$-module structure follows from the discussion in the beginning of section \ref{fund_constr} (which in turn uses \cite[3.1, 3.2]{ST1}), together with the Prop. \ref{commute} just proved. The assertion about homomorphisms $M_1 \ra M_2$ follows from the uniqueness of the $D(\frg,P)$-module structure. \qed

\vskip8pt

In the following we consider the tensor product $D(G) \otimes_{D(\frg,P)} M$ for objects $M$ of $\cO^\frp_\alg$. Because $M$ is a finitely generated $U(\frg)$-module, hence a finitely generated $D(\frg,P)$-module, the tensor product $D(G) \otimes_{D(\frg,P)} M$ is a
finitely generated $D(G)$-module. Let $D(\frg,P_0)$ be the subring of $D(G_0)$ generated by $U(\frg)$ and $D(P_0)$. By \ref{commute} we have $D(\frg,P_0) = U(\frg) \cdot D(P_0)$ and $D(G_0) \otimes_{D(\frg,P_0)} M$ is likewise a finitely generated $D(G_0)$-module. Furthermore,
it follows from the Iwasawa decomposition $G = G_0P$ that the canonical map

\vspace{-0.3cm}
$$D(G_0) \otimes_{D(\frg,P_0)} M \lra D(G) \otimes_{D(\frg,P)} M$$


\noindent is an isomorphism of $D(G_0)$-modules, cf. \cite[6.1 (i)]{ST5}. The next proposition shows that $D(G_0) \otimes_{D(\frg,P_0)} M$ is a co-admissible $D(G_0)$-module.

\setcounter{enumi}{0}

\begin{prop}\label{basiciso} The canonical map

\vspace{-0.3cm}
$$\iota: M  = \left(U(\frg) \otimes_{U(\frp)} W\right)/\frd  \lra  \left(D(G) \otimes_{D(P)} W \right)/ \, D(G)\frd$$

\vskip6pt

\noindent extends to an isomorphism of $D(G)$-modules 

$$D(G) \otimes_{D(\frg,P)} M \cong \left(D(G) \otimes_{D(P)} W \right)/ \, D(G)\frd
\,\,,$$

\vskip6pt

\noindent Together with \ref{Prop_admissible} we hence get an isomorphism of $D(G)$-modules

$$D(G) \otimes_{D(\frg,P)} M  \cong \left( \Ind^G_P(W')^\frd \right)' \,\,.$$

\vskip6pt

\noindent As the canonical map $D(G_0) \otimes_{D(\frg,P_0)} M \ra D(G) \otimes_{D(\frg,P)} M$ is an isomorphism, we also have an isomorphism of $D(G_0)$-modules

$$D(G_0) \otimes_{D(\frg,P_0)} M \cong \left( \Ind^G_P(W')^\frd \right)' \;.$$

\noindent In particular, $D(G_0) \otimes_{D(\frg,P_0)} M$ is a coadmissible $D(G_0)$-module.

\end{prop}

\noindent \Pf First we have to show that $\iota$ is $D(\frg,P)$-linear. But this follows from the $D(\frg,P)$-linearity of the natural maps $q: U(\frg)\otimes_{U(\frp)} W \ra M$ and $U(\frg) \otimes_{U(\frp)} W \lra  D(G) \otimes_{D(P)} W$ together with the fact that $\frd$ is a $D(\frg,P)$-submodule of $U(\frg) \otimes_{U(\frp)} W.$
This shows that $\iota$ extends to a $D(G)$-module homomorphism

$$\Phi: D(G) \otimes_{D(\frg,P)} M \lra \left(D(G) \otimes_{D(P)} W \right)/ \, D(G)\frd$$

\vskip6pt

\noindent by setting $\Phi(\delta \otimes v) = \delta \iota(v)$. In the other direction we define

$$\Psi: \left(D(G) \otimes_{D(P)} W \right)/ \, D(G)\frd \lra D(G) \otimes_{D(\frg,P)} M$$

\vskip6pt

\noindent by $\Psi((\delta \otimes w) + D(G)\frd) = \delta \otimes \left(w + \frd \right)$. It is immediate that $\Psi$ is well-defined and easily checked that $\Phi$ and $\Psi$ are inverse to each other. The last statement is now an immediate consequence of \ref{Prop_admissible} (i). \qed

\vskip8pt

\setcounter{enumi}{0}

\begin{para}\label{yetanother}{\it Yet another description.}
In this paragraph we present another approach to the representation $\Ind^G_P(W')^\frd$ which appeared already in \cite{O} in the case of $G=\GL_n.$ Here we correct certain group actions which were formulated in loc.cit..

Let ${\bf G_0}$ be a split reductive group model of ${\bf G}$ over $O_K.$  Let ${\bf T_0}\subset {\bf G_0}$ be a maximal torus and fix a Borel subgroup  ${\bf B_0\subset G_0}$ containing ${\bf T_0}$.
We fix  a standard parabolic subgroup ${\bf P_{0}}$ of ${\bf G_0}$. We denote by ${\bf U_{P,0}}$ its unipotent radical, by ${\bf U^-_{P,0}}$ its opposite unipotent radical and by ${\bf L_{P,0}}$ its Levi component containing ${\bf T_0}$.

Let $\pi\in O_L$ be a uniformizer. For any positive integer $n \in \bN$, we consider the reduction map
\begin{numequation}\label{reduction_map}
p_n:{\bf G_0}(O_L)\rightarrow {\bf G_0}(O_L/{(\pi^n)})
\end{numequation}
Set
\begin{eqnarray*}
 P^n & = &  p_n^{-1}\big({\bf P_0}(O_L/{(\pi^n)})\big), \; U_P^{n} =    p_n^{-1}\big({\bf U_{P,0}}(O_L/{(\pi^n)})\big), \; L_P^{n}  =    
 p_n^{-1}\big({\bf L_{P,0}}(O_L/{(\pi^n)})\big)
\end{eqnarray*}
and
$$  U_P^{-,n}   =  {\rm ker}\,\big({\bf U_{P,0}^{-} }(O_L)\rightarrow {\bf U_{P,0}^{-}}(O_L/{(\pi^n)})\big)
, U_P^{+,n}   =  {\rm ker}\,\big({\bf U_{P,0}}(O_L)\rightarrow {\bf U_{P,0}}(O_L/{(\pi^n)})\big).$$
These are compact open subgroups of $G_0={\bf G}(O_L).$ The Levi decomposition on ${\bf P_0}(O_L/(\pi^n))$ induces a decomposition
\begin{numequation}\label{Levi}
P^n=L_P^n\cdot U_P^n.
\end{numequation}
Further, we have equalities
\begin{eqnarray}\label{commutativity}
\nonumber P^n & = & U_P^{-,n}\cdot P_{0}=P_0\cdot U_P^{-,n}, \smallskip\\
U_P^n & = & U_P^{-,n}\cdot U_{P,0}=U_{P,0}\cdot U_P^{-,n}, \smallskip \\
\nonumber L_P^n & = & U_P^{+,n} \cdot U_P^{-,n}\cdot L_{P,0}=L_{P,0}\cdot U_P^{+,n}\cdot U_P^{-,n} \;.
\end{eqnarray}

We may interpret $U_P^{-,n} \subset U_{P,0}^-$ as the $L$-valued points of  an open $L$-affinoid polydisc, since  all non-diagonal entries $x$ in $U_P^{-,n}$ have norm $|x| \leq |\pi^n|.$ Then the ring of $K$-valued rigid-analytic functions $\cO(U_P^{-,n}) $ is a $K$-Banach algebra equipped with the following  norm.
Let $$\Phi_{U_P^{-}}=\{\beta_1,\ldots,\beta_r\}$$ be the set of roots appearing   in $\fru_P^-.$
We consider  the ring $\cO({\bf U_P^-})$ of $K$-valued algebraic functions on ${\bf U_P^-}$  as  the polynomial $K$-algebra in the
indeterminates $X_{\beta_1},\ldots,X_{\beta_r}.$ For $n\in \bN$, set $\epsilon_n=|\pi|^n.$  Then

\begin{numequation}\label{norm_a}
 \Big|\sum\nolimits_{(i_1,\ldots,i_r) \in \bN^r_0} a_{i_1,\ldots,i_r} X^{i_1}_{\beta_1}\cdots X^{i_r}_{\beta_r} \Big|_n:= \sup\nolimits_{(i_1,\ldots,i_r) \in \bN^r_0} |a_{i_1,\ldots,i_r}|_K\epsilon_n^{i_1+\cdots + i_r}.
\end{numequation}

\noindent defines a norm on the $K$-algebra $\cO({\bf U_P^-})$ so that $\cO(U_P^{-,n})$ becomes the completion of it.
This  $K$-Banach space is contained in the larger ring of bounded functions on $U_P^{-,n}$

\vspace{-0.3cm}
\begin{eqnarray*}\cO_b(U_P^{-,n}):=\Big\{\sum\nolimits_{(i_1,\ldots,i_r)} a_{i_1,\ldots,i_r}\cdot  X^{i_1}_{\beta_1}\cdots X^{i_r}_{\beta_r} \mid a_{i_1,\ldots,i_r} \in K,\; \sup\nolimits_{(i_1,\ldots,i_r)}\!\! |a_{i_1,\ldots,i_r}|_K\epsilon_n^{i_1+\cdots + i_r} < \infty \Big\}
\end{eqnarray*}

\noindent which is a $K$-Banach space with the same norm, as well.


Let $W$ be a finite-dimensional locally analytic $P$-representation. For any $n\in \bN,$ set 
$$V_n=\{\mbox{rigid-analytic maps } f:P^n \to W' \mid f(gp)= p^{-1}\cdot f(g) \}.$$
This is a locally analytic $P^n$-representation where $P^n$ acts via left translation.
There is a natural identification
\begin{numequation}\label{rigid_analytic}
 V_n\cong \cO(U_P^{-,n}) \otimes_K W'.
\end{numequation}

The action of $P^n=U_P^{-,n}\cdot L_{P,0}\cdot U_{P,0}$ on the latter object translates as follows. The subgroup $L_{P,0}$ acts 
via conjugation on $U_P^{-n}$ on the first factor and on $W'$ by the given one. The subgroup $U_P^{-,n}$ acts by translations on the 
first factor and trivially on $W'$. We omit the description of the action of $U_{P,0}$ since we will not use it explicitly.

\begin{lemma}\label{Lemma_ind_proj}
 There is a canonical isomorphism
\begin{eqnarray*}
\Ind^{G_0}_{P_{0}}(W') & \cong & \varinjlim\nolimits_{n\in \bbN}  {\rm Ind}^{G_0}_{P^n}(V_n),
\end{eqnarray*}
where $\Ind^{G_0}_{P^n}$ denotes the ordinary  induction of group representations. 
\end{lemma}

\noindent \Pf Let $f\in  {\rm Ind}^{G_0}_{P^n}(V_n)$ and denote by $R_n\subset G_0$ a set of
representatives for $G_0 /P^n.$ We may consider $f$ as a function on $G_0= \bigcup_{g \in R} gP^n$
such that the restriction to each open subset $gP^n$ is rigid-analytic. Hence it gives rise to
an locally analytic function on $G_0$ which lies in $\Ind^{G_0}_{P_{0}}(W').$ This construction is compatible
with varying the integer $n.$

On the other hand, if $f\in \Ind^{G_0}_{P_{0}}(W')$, then there is a disjoint union $G_0=\bigcup_i U_i$ by $P_0$-stable subsets (with respect to
the action from the right)  such that $f$ restricted to $U_i$ is rigid-analytic. The open subgroups $P^n \subset G, n\in \bN,$ form a cofinal system of 
$P_0$-stable neighborhoods of the identity. Hence by choosing a refinement of the covering we may assume that there is some $n$ such that 
it is of the shape $G_0= \bigcup_{g \in R} gP^n$ as above. Thus we get an element in ${\rm Ind}^{G_0}_{P^n}(V_n).$

Hence we have constructed two maps which are clearly inverse to each other. It follows from the definition of the space of $W'$-valued locally analytic functions that these maps are also topological isomorphisms.
\qed

Next we consider $W$ as a Lie algebra representation of $\frp.$
Since the universal enveloping algebra of $\frg$ splits (by the PBW-theorem) into a tensor product $U(\frg)=U(\fru_P^-)\otimes_K U(\frp),$ we get  $U(\frg) \otimes_{U(\frp)} W'=U(\fru_P^-)\otimes_K W'.$
Similarly as above, there is an action of $P_0$ on this space.
On elements $x\otimes w \in U(\frg) \otimes_{U(\frp)} W'$, a given $p\in P_0$ acts by
$$p\cdot (x\otimes w)={\rm Ad}(p)(x)\otimes pw.$$
Here ${\rm Ad}$ denotes the adjoint action.
Under the identification $U(\frg) \otimes_{U(\frp)} W' = U(\fru^{-}) \otimes_K W'$
the action of the Levi factor $L_P$ is again  via the adjoint action on $U_P^{-,n}$ and the natural one on $W'.$
We stress that there is no natural action of $U_P^{-,n}$ on  the above space since it is not complete in a suitable sense, cf. 
Step 2 in the proof of Thm. \ref{irredH}.

Let $L_{\beta_1},\ldots , L_{\beta_r}$ be a basis of the vector space $\fru_P^-$. We consider the $L_P$-equivariant pairing
\setcounter{enumi}{4}
\begin{eqnarray}\label{pairing1}  \cO({\bf U_P^-}) \times  U(\fru_P^-) &\rightarrow& K \\
\nonumber (f,\frz) &\mapsto & (\frz \cdot f)(1).
\end{eqnarray}
This is a non-degenerate pairing and induces therefore a $K$-linear $L_P$-equivariant injection
$$\cO({\bf U_P^-}) \hookrightarrow {\rm Hom}_K(U(\fru_P^-),K) . $$
\noindent given by
$$X_{\beta_1}^{i_1}\cdots X_{\beta_r}^{i_r} \mapsto (i_1)! \cdots (i_r)! \cdot (L^{i_1}_{\beta_1}\cdots L^{i_r}_{\beta_r})^\ast.$$

\noindent Here $\big\{ (L^{i_1}_{\beta_1}\cdots L^{i_r}_{\beta_r})^\ast \mid
(i_1,\ldots,i_r)\in \bN_0^r \big\}$ is the dual basis of $\{
L^{i_1}_{\beta_1}\cdots L^{i_r}_{\beta_r}\mid(i_1,\ldots,i_r)\in
\bN_0^r \}.$
The pairing \ref{pairing1} extends by continuity to a non-degenerate $(L_P)_{0}$-equivariant pairing
\begin{numequation}\label{pairing2}
\cO(U_P^{-,n}) \times  U(\fru_P^-) \rightarrow K .
\end{numequation}
which induces a $P_0$-equivariant pairing
\begin{numequation}\label{pairing3}
(\;,\;):  (\cO(U_P^{-,n}) \otimes_K W')  \times  (U(\fru_P^-) \otimes_K W) \rightarrow K
\end{numequation}
\vspace{-0.3cm}
$$ (f\otimes n,\frz\otimes \phi) \mapsto \phi(n)\cdot (\frz \cdot f)(1).$$

\vskip5pt
For $\epsilon \in |\overline{K^\ast}|,$ we consider the  norm $|\;\;|_\epsilon$ on $U(\fru_P^-)$ given by

\vspace{-0.3cm}
\begin{numequation}\label{norm}
\left|\sum_{(i_1,\ldots,i_r) \in \bN^r_0} a_{i_1,\ldots,i_r} L^{i_1}_{\beta_1}\cdots L^{i_r}_{\beta_r} \right|_\epsilon:= \sup_{(i_1,\ldots,i_r) \in \bN^r_0} |(i_1)! \cdots (i_r)! \cdot  a_{i_1,\ldots,i_r}|\epsilon^{i_1+\cdots + i_r}.
\end{numequation}

\noindent The completion of $U(\fru_P^-)$ with respect to this norm yields the $K$-Banach space

\vspace{-0.3cm}
\begin{eqnarray*}
U(\fru_P^-)_{\epsilon}  := & \Big\{& \sum\nolimits_{(i_1,\ldots,i_r) \in \bN^r_0} a_{i_1,\ldots,i_r} L^{i_1}_{\beta_1}\cdots L^{i_r}_{\beta_r} \mid a_{i_1,\ldots,i_r} \in K,\; \\
& & |(i_1)! \cdots (i_r)! \cdot  a_{i_1,\ldots,i_r}|\epsilon^{i_1+\cdots + i_r} \rightarrow 0, i_1+\cdots + i_r \rightarrow \infty \Big\}.
\end{eqnarray*}

\noindent We abbreviate $U(\fru_P^-)_n :=U(\fru_P^-)_{\frac{1}{\epsilon_n}}.$
The pairing \ref{pairing2}
extends to
$$\cO_b(U_P^{-,n}) \times  U(\fru_P^-)_n \rightarrow K $$
such that
$\cO_b(U_P^{-,n})$ becomes the topological dual of
$U(\fru_P^-)_n$ (cf. the Example in \cite[ch. I, \S 3]{S1}). It follows that $\cO_b(U_P^{-,n})\otimes_K W'$ is the topological dual of $U(\fru_P^-)_n \otimes_K W$. In particular, we get an action of $U^{-,n}_P$ and hence of $P^n$ on
$U(\fru_P^-)_n \otimes_K W.$

The following statement has been proved for ${\bf G=\GL_n}$ in \cite{O}.

\end{para}

\begin{prop}\label{Duality} There is an isomorphism of (Hausdorff) locally convex $K$-vector spaces
$$\varinjlim\nolimits_{n\in \bN} \cO(U_P^{-,n})\otimes_K W' \stackrel{\sim}{\longrightarrow}  \Big(\varprojlim\nolimits_{n \in \bN} U(\fru_P^{-})_n \otimes_K W\Big)' $$

\vskip8pt

\noindent compatible with the action of
$\varprojlim_n P^n =  P_{0}$.
\end{prop}

\proof The proof is the same as in loc.cit. and proceeds as follows. There are identifications
$$\varinjlim\nolimits_n \cO_b(U_P^{-,n}) = \varinjlim\nolimits_n \cO(U_P^{-,n})$$


\noindent resp.
$$\varinjlim\nolimits_n \cO_b(U_P^{-,n})\otimes_K W'  = \varinjlim\nolimits_n \cO(U_P^{-,n})\otimes_K W'$$


\noindent of locally convex $K$-vector spaces.  As we saw above, the space $\cO_b(U_P^{-,n})\otimes_K W'$ is the topological dual of $U(\fru_P^-)_n \otimes_K W$. The claim follows now from \cite[Thm. 3.4]{Mo1} respectively \cite[Prop. 16.10]{S1} on the duality of projective limits of $K$-Fr\'echet spaces and injective limits of $K$-Banach spaces with compact
transition maps. \qed

\bigskip
Now we consider more generally a surjective map
$$\phi: U(\frg)\otimes_{U(\frp)} W \rightarrow M$$
\noindent of $U(\frg)$-modules
as in \ref{basicsequence}. Let $\frd=\ker \phi$ be its kernel. We may consider $\frd$ by PBW as a submodule of $U(\fru_P^-) \otimes_K W$. 
We denote by
$$\frd_n\subset U(\fru_P^{-})_n \otimes_K W$$


\noindent its topological closure in $U(\fru_P^{-})_n \otimes_K W$. Finally, we put
$$(\cO(U_P^{-,n})\otimes_K W')^\frd=\{f\in \cO(U_P^{-,n})\otimes_K W' \mid \langle \frz,f \rangle =0 \;\forall \;\frz\in \frd\} \;.$$


\noindent Then $U(\fru_P^{-})_n\otimes_K W'/\frd_n$ and $(\cO(U_P^{-,n})\otimes_K W')^\frd$ are $K$-Banach spaces equipped with a 
natural $P^n$-action, as well. The following statement generalizes Proposition \ref{Duality}.

\begin{prop}\label{DualityII} There is an isomorphism of (Hausdorff) locally convex $K$-vector spaces

$$\varinjlim\nolimits_{n\in \bN} (\cO(U_P^{-,n})\otimes_K W')^\frd \stackrel{\sim}{\longrightarrow}  \Big(\varprojlim\nolimits_{n \in \bN} U(\fru_P^{-})_n\otimes_K W/\frd_n\Big)' $$

\vskip8pt

\noindent compatible with the action of $\varprojlim_n P^n =  P_{0}$.
\end{prop}

\noindent \Pf We define  similarly $(\cO(U_P^{-,n})\otimes_K W')^{\frd_n}, (\cO_b(U_P^{-,n})\otimes_K W')^\frd$ and $(\cO_b(U_P^{-,n})\otimes_K W')^{\frd_n}.$
By the density of $\frd$ in $\frd_n$  we have
$$(\cO(U_P^{-,n})\otimes_K W')^\frd=(\cO(U_P^{-,n})\otimes_K W')^{\frd_n}$$
\noindent resp.
$$(\cO_b(U_P^{-,n}) \otimes_K W')^\frd=(\cO_b(U_P^{-,n}) \otimes_K W')^{\frd_n} \;.$$


\noindent Now, the $K$-Banach space $(\cO_b(U_P^{-,n})\otimes_K W')^{\frd_n}$ is the topological dual of 
$U(\fru_P^{-})_n\otimes_K W/\frd_n$ (here we use that $K$ is spherically complete, cf. \cite[9.4]{S1}). Then we proceed with the argumentation as in the proof of Proposition \ref{Duality}. \qed

\medskip

Finally we arrive at the next alternative description of $\Ind^{G}_{P}(W')^\frd$.

\begin{cor}\label{duality_cor}  There is an isomorphism of locally convex $K$-vector spaces compatible with the action of $G_0$:
$$\Ind^{G_0}_{P_0}(W')^\frd = \varinjlim\nolimits_n  \Ind^{G_0}_{P^n}(\cO(U_P^{-,n}) \otimes_K W')^\frd \cong \Big(\varprojlim\nolimits_n
\Ind^{G_0}_{P^n}( U(\fru_P^-)_n\otimes_K W/\frd_n) \Big)' \; .$$
\end{cor}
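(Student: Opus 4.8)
The plan is to prove the statement one congruence level at a time, the point being to upgrade the $P_0$-equivariance of Proposition~\ref{DualityII} to $G_0$-equivariance. What makes this possible is that each $P^n$ is open in the \emph{compact} group $G_0$, so $[G_0:P^n]<\infty$; fixing coset representatives $1=g_1,\dots,g_{k_n}$ of $G_0/P^n$, the functor $\Ind^{G_0}_{P^n}(\cdot)$ is the finite direct sum $E\mapsto\bigoplus_{j=1}^{k_n}E$ (via $f\mapsto(f(g_j))_j$) equipped with a twisted $G_0$-action. The starting point is the identification $\Ind^{G_0}_{P_0}(W')\cong\varinjlim_{n}\Ind^{G_0}_{P^n}(V_n)$, $V_n\cong\cO(U_P^{-,n})\otimes W'$, from the paragraph preceding Proposition~\ref{Duality} (and $\Ind^{G_0}_{P_0}(W')^\frd$ means the subspace where $\langle\frz,\cdot\rangle_{C^{an}(G_0,K)}=0$ for all $\frz\in\frd$, i.e.\ the restriction to $G_0$ of $\Ind^G_P(W')^\frd$).

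For the first equality I would verify that the condition defining $\Ind^{G_0}_{P_0}(W')^\frd$ is local at the identity and hence decouples over the cosets $g_jP^n$. The right regular action of $U(\frg)$ — the relevant one, since $\frd\subset U(\frg)\otimes_{U(\frp)}W=U(\fru_P^-)\otimes_KW$ — preserves each $\Ind^{G_0}_{P^n}(V_n)$, as differentiating along $\exp(t\frx)$ (which for small $t$ stays in one $P^n$-coset) does not destroy rigid-analyticity along cosets; and for $\frz\in\frd$ the function $\langle\frz,f\rangle_{C^{an}(G_0,K)}$ restricted to $g_jP^n$ depends only on $f|_{g_jP^n}$, where it is exactly the condition defining $(\cO(U_P^{-,n})\otimes W')^\frd$ via the pairing~(\ref{pairing3}). (One uses here that $\frd_n$ is $P^n$-stable to pass, as in the proof of Lemma~\ref{Lemma_admissible}(ii), between pointwise vanishing on $P^n$ and vanishing at $1$.) Thus $\Ind^{G_0}_{P^n}(V_n)^\frd=\Ind^{G_0}_{P^n}\big((\cO(U_P^{-,n})\otimes W')^\frd\big)$, and since $\frd$-annihilation is compatible with the transition maps and $\varinjlim$ is exact, the first equality follows, manifestly $G_0$-equivariantly.

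For the isomorphism with $\big(\varprojlim_n\Ind^{G_0}_{P^n}(U(\fru_P^-)_n\otimes W/\frd_n)\big)'$ I would compose four $G_0$-equivariant isomorphisms. (i) In the colimit one may replace $(\cO(U_P^{-,n})\otimes W')^\frd$ by $(\cO_b(U_P^{-,n})\otimes W')^{\frd_n}$: by the density of $\frd$ in $\frd_n$ these equal $(\cO(U_P^{-,n})\otimes W')^{\frd_n}$, resp.\ $(\cO_b(U_P^{-,n})\otimes W')^{\frd}$, while $\cO(U_P^{-,n})\subset\cO_b(U_P^{-,n})\subset\cO(U_P^{-,m})$ for $m>n$ (a power series bounded for $\epsilon_n$ is a null series for $\epsilon_m$), so the two inductive systems are mutually cofinal. (ii) By the computation in the proof of Proposition~\ref{DualityII}, $(\cO_b(U_P^{-,n})\otimes W')^{\frd_n}$ is, $P^n$-equivariantly, the strong dual of $U(\fru_P^-)_n\otimes W/\frd_n$. (iii) For finite index there is a $G_0$-equivariant topological isomorphism $\Ind^{G_0}_{P^n}(E')\cong\big(\Ind^{G_0}_{P^n}(E)\big)'$ for any $P^n$-Banach space $E$, induced by the $G_0$-invariant perfect pairing $(f,F)\mapsto\sum_j F(g_j)(f(g_j))$. (iv) The transition maps $\cO_b(U_P^{-,n})\hookrightarrow\cO_b(U_P^{-,n+1})$ are compact, hence so are those of the inductive system $\Ind^{G_0}_{P^n}\big((\cO_b(U_P^{-,n})\otimes W')^{\frd_n}\big)$, and dually those of the projective system $\Ind^{G_0}_{P^n}(U(\fru_P^-)_n\otimes W/\frd_n)$; therefore the latter limit is a nuclear Fr\'echet space whose strong dual is the inductive limit of the duals, which is the projective/injective limit duality already invoked for Proposition~\ref{Duality} (\cite[Theorem 3.4]{Mo1}, \cite[Prop.~16.10]{S2}).

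The only genuinely delicate point is keeping track of $G_0$-equivariance while moving the duality past the induction functor — which is exactly why one works level by level instead of quoting Proposition~\ref{DualityII} directly. The remaining ingredients ($P^n$-stability of $\frd_n$, compactness of the transition maps, the harmless interchange of $\cO$ and $\cO_b$ in the colimit) are routine.
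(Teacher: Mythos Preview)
Your proof is correct and is precisely the argument the paper leaves implicit: the corollary is stated without proof, as an immediate consequence of Proposition~\ref{DualityII}, and your level-by-level finite-index induction is the natural way to pass from the $P_0$-equivariant duality there to the $G_0$-equivariant statement here. Your careful treatment of the first equality (decoupling the $\frd$-condition over cosets via the $P^n$-stability of $\frd_n$) and of step~(iii) (commuting $\Ind^{G_0}_{P^n}$ with strong duals via the explicit finite direct sum) fills in exactly the details the paper omits.
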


\Pf Lemma \ref{Lemma_ind_proj} together with the identity (\ref{rigid_analytic}) give rise to an isomorphism
$\Ind^{G_0}_{P_0}(W')^\frd = \varinjlim\nolimits_n  \Ind^{G_0}_{P^n}(\cO(U_P^{-,n}) \otimes_K W')^\frd.$ With the same reasoning
as in the proof of the foregoing statement we see that $\Ind^{G_0}_{P^n}(\cO_b(U_P^{-,n}) \otimes_K W')^\frd$
is the topological dual of $\Ind^{G_0}_{P^n}( U(\fru_P^-)_n\otimes_K W/\frd_n)$. Passing to the limit yields the claim.
\qed


\section{The functor $\cF^G_P$ and its properties}

\begin{para} {\it The functor $\cF^G_P$.}
Denote by $\Rep^{\ell a}_K(G)$  the category of locally analytic representations of $G$ on 
barrelled locally convex Hausdorff $K$-vector spaces. In the previous section we defined for any object $M \in \cO^\frp_\alg$ the $G$-representation $\Ind^G_P(W')^\frd$, cf. \ref{basicdfn}. By \ref{Prop_admissible} it is strongly admissible. In particular, its underlying topological vector space is reflexive. Furthermore, the continuous dual space, equipped with the strong topology, is isomorphic to $D(G) \otimes_{D(\frg,P)} M$, as a $D(G)$-module, cf. \ref{basiciso}. This prompts us to consider the functor
$$\cF^G_P: \cO^\frp_{\alg} \lra \Rep^{\ell a}_K(G)$$
defined by
$$\cF^G_P(M) = \left( D(G) \otimes_{D(\frg,P)} M \right)' \,\,.$$
\end{para}

\vskip8pt

\begin{prop}\label{exact} The functor $\cF^G_P$ is exact.
\end{prop}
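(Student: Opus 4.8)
The plan is to deduce exactness of $\cF^G_P$ from the exactness of the functor $M \mapsto D(G) \otimes_{U(\frg,P)} M$ on the category $\cO^\frp_\alg$, together with the fact (Proposition \ref{basiciso} and Lemma \ref{Lemma_admissible}) that $\cF^G_P(M) = (D(G) \otimes_{U(\frg,P)} M)'$ is always a coadmissible-dual, so that strong duality $N \mapsto N'$ is exact on the relevant category of modules. Concretely, given a short exact sequence $0 \to M' \to M \to M'' \to 0$ in $\cO^\frp_\alg$, I would first apply $D(G) \otimes_{U(\frg,P)} -$ and show that
$$D(G) \otimes_{U(\frg,P)} M' \lra D(G) \otimes_{U(\frg,P)} M \lra D(G) \otimes_{U(\frg,P)} M'' \lra 0$$
is exact, and moreover that the first map is a \emph{closed} embedding of coadmissible $D(G_0)$-modules; then dualize, using that strong duality is exact on coadmissible modules over the Fr\'echet--Stein algebra $D(G_0)$ (this is \cite[\S3]{ST2}).

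The key steps, in order, would be: (1) Right-exactness of $D(G) \otimes_{U(\frg,P)} -$ is automatic since tensoring is right exact; the content is to identify $D(G) \otimes_{U(\frg,P)} M$ with the coadmissible module $(D(G)\otimes_{D(P)} W)/D(G)\frd$ of Proposition \ref{basiciso}, which makes all three terms coadmissible $D(G_0)$-modules (here one uses the Iwasawa decomposition $G = G_0 B$ to reduce to $G_0$, where $D(G_0)$ is Fr\'echet--Stein and $D(G_0) \otimes_{U(\frg,P_0)} M$ is finitely generated, hence coadmissible). (2) For left-exactness one must show that $D(G) \otimes_{U(\frg,P)} M' \to D(G) \otimes_{U(\frg,P)} M$ is injective with closed image. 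I would argue as follows: choose finite-dimensional $\frp$-stable generating subspaces $W' \subset M'$, $W \subset M$ compatibly (so $W' \subset W$, with $W'' = W/W'$ generating $M''$), giving presentations $0 \to \frd' \to U(\frg)\otimes_{U(\frp)} W' \to M' \to 0$ etc.; then $D(G)\otimes_{U(\frg,P)} M' = (D(G)\otimes_{D(P)} W')/D(G)\frd'$, and the map into $(D(G)\otimes_{D(P)} W)/D(G)\frd$ is induced by $W' \hookrightarrow W$. Injectivity and closedness of the image amount to the statement $D(G)\frd' = D(G)\frd \cap (D(G)\otimes_{D(P)} W')$ as submodules of $D(G)\otimes_{D(P)} W$. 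Equivalently, passing to the associated admissible representations via Lemma \ref{Lemma_admissible}(ii) and Theorem 6.3 of \cite{ST2}, this is the assertion that $\Ind^G_P((W')^\vee)^{\frd'}$ is the image of $\Ind^G_P(W^\vee)^{\frd}$ under the (surjective, by exactness of smooth/algebraic induction on the finite-dimensional pieces) restriction map $\Ind^G_P(W^\vee) \twoheadrightarrow \Ind^G_P((W')^\vee)$ — a statement that can be checked at the level of the $C^{an}(G,-)$-valued pairing (\ref{pair_C(G,K)}), using that the differential equations cutting out $\frd'$ inside $\Ind^G_P((W')^\vee)$ are precisely those inherited from $\frd$. (3) Dualize the resulting short exact sequence of coadmissible $D(G_0)$-modules using exactness of $N \mapsto N'$ on coadmissible modules.

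I expect the main obstacle to be step (2), specifically the closedness/exactness on the module side: one needs $D(G)\frd' = D(G)\frd \cap (D(G)\otimes_{D(P)} W')$, i.e.\ that forming the $D(G)$-submodule generated by the relations is compatible with the subobject $M' \subset M$. The cleanest route is probably to bypass the module computation and instead work directly with the induced representations, invoking that algebraic/locally-analytic induction $\Ind^G_P$ from finite-dimensional $P$-representations is exact and that the subspaces $\Ind^G_P(-)^{\frd}$ are the simultaneous kernels of a family of continuous (differential-operator-valued) functionals depending functorially on the data — then exactness of $\cF^G_P$ reduces to a diagram chase with these kernels together with the already-established surjectivity coming from exactness of $\Ind^G_P$ and the snake lemma. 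Either way, the crux is controlling how the defining differential equations $\frd$ behave under sub- and quotient-objects of $M$, which is where the hypothesis $M \in \cO^\frp_\alg$ (and the structural description in Proposition \ref{basiciso}) is essential.
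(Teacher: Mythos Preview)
Your overall strategy---prove exactness of $M \mapsto D(G)\otimes_{U(\frg,P)} M$ on $\cO^\frp_\alg$ and then dualize using coadmissibility---is sound and matches the paper's underlying logic. You also correctly locate the crux: right-exactness is free, and the real content is the injectivity of $D(G)\otimes_{U(\frg,P)} M' \to D(G)\otimes_{U(\frg,P)} M$ (equivalently, surjectivity of $\Ind^G_P(W^\vee)^{\frd} \to \Ind^G_P((W')^\vee)^{\frd'}$). But your proposed mechanism for this step is where the gap lies. Saying the lifting ``can be checked at the level of the $C^{an}(G,K)$-valued pairing, using that the differential equations cutting out $\frd'$ are precisely those inherited from $\frd$'' does not actually produce a lift: given $f \in \Ind^G_P((W')^\vee)^{\frd'}$, you can lift it to some $\tilde f \in \Ind^G_P(W^\vee)$, but there is no a priori reason that $\tilde f$ should be annihilated by $\frd$, and the compatibility $\frd' = \frd \cap (U(\frg)\otimes_{U(\frp)} W')$ is a statement about the \emph{operators}, not about solutions. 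Your vaguer fallback (``diagram chase with these kernels \ldots\ and the snake lemma'') points in the right direction but stops short of setting anything up.

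The paper closes exactly this gap, and the argument has two ingredients you are missing. First, a reduction: build the $3\times 3$ diagram with rows $\cF(M_i)$, $\Ind^G_P(W_i')$, $\cF(\frd_i)$; the middle row is exact (parabolic induction of finite-dimensional $P$-modules), and an easy check gives right-exactness of the top row (hence $\cF^G_P$ is left exact). The snake lemma then reduces full exactness of the top row to exactness of each \emph{column}, i.e.\ to the special case of surjectivity $\Ind^G_P(W_i') \twoheadrightarrow \cF(\frd_i)$, which is exactness of $\cF^G_P$ on sequences with a generalized Verma module in the middle. Second, and this is the technical heart, that special surjectivity is proved by dualizing and invoking the explicit description of Corollary \ref{duality_cor}: the dual map becomes $\varprojlim_n \Ind^{G_0}_{P^n}(U(\fru_P^-)_n \otimes \bar W / \bar\frd_n) \to \varprojlim_n \Ind^{G_0}_{P^n}(U(\fru_P^-)_n \otimes W)$, and injectivity at each finite level follows because Banach completion is exact on normed spaces (\cite{BGR}) and $U(\fru_P^-)\otimes \bar W/\bar\frd \cong \frd \hookrightarrow U(\fru_P^-)\otimes W$; one then passes to the projective limit using the topological Mittag-Leffler property. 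None of this apparatus (the Banach completions $U(\fru_P^-)_n$, Propositions \ref{Duality}--\ref{DualityII}, Mittag-Leffler) appears in your proposal, and without it the surjectivity step remains unproved.
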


\noindent \Pf Let $0\rightarrow M_1\rightarrow M_2 \rightarrow M_3 \rightarrow 0$ be an exact sequence in the category $\cO_\alg^\frp.$ Then there are finite-dimensional algebraic generating ${\bf P}$-representations $W_i\subset M_i$, $i=1,2,3$,  such that we have an induced exact sequence
$0 \rightarrow W_1\rightarrow W_2 \rightarrow W_3 \rightarrow 0$. Indeed, let $W_2$ be a ${\bf P}$-submodule which generates $M_2$. Then the image $W_3$ of $W_2$ in $M_3$ generates $M_3$. Consider $X:=\ker(W_2\rightarrow W_3).$ If it does not generate $M_1$, then we  let $Y$ be any generating system and put $W_1=X+Y$ resp. replace $W_2$ by $W_2+Y$.  The resulting sequence satisfies the claim.

We get a commutative diagram with exact rows:
\vspace{1cm}

\hspace{1.5cm}$\begin{array}{ccccccccc} & & 0 &  & 0  & & 0 & & \\
& & \uparrow &  & \uparrow  & &\uparrow & & \\
0 & \rightarrow &  M_1 &  \rightarrow & M_2 & \rightarrow & M_3 & \rightarrow & 0 \\
& & \uparrow &  & \uparrow  & &\uparrow & & \\
0 & \rightarrow &   U(\frg) \otimes_{U(\frp)} W_1 & \rightarrow &  U(\frg) \otimes_{U(\frp)} W_2 & \rightarrow  &  U(\frg) \otimes_{U(\frp)} W_3 & \rightarrow &  0    \\
& & \uparrow & & \uparrow &  & \uparrow & & \\
0 & \rightarrow  & \frd_1 & \rightarrow  & \frd_2 & \rightarrow & \frd_3 & \rightarrow &  0 \\
& & \uparrow &  & \uparrow  & &\uparrow & & \\
& & 0 &  & 0  & & 0 & &
\end{array}$

\medskip
\noindent Note, that exactness of the middle row follows from the exactness of the functor $U(\frg)\otimes_{U(\frp)} -.$ 
The exactness of the last row is a consequence of the snake resp. $3 \times 3$ lemma. If we apply our functor $\cF=\cF^G_P$ to the above diagram we get a new diagram:
\vspace{0.7cm}

\hspace{1.5cm}$\begin{array}{ccccccccc} & & 0 &  & 0  & & 0 & & \\
& & \downarrow &  & \downarrow  & &\downarrow & & \\
0 & \leftarrow &  \cF(M_1) &  \leftarrow &  \cF(M_2) & \leftarrow & \cF(M_3) & \leftarrow & 0 \\
& & \downarrow &  & \downarrow  & &\downarrow & & \\
0 & \leftarrow &   \Ind^{G}_P(W_1') & \leftarrow &  \Ind^{G}_P(W_2') & \leftarrow  &  \Ind^{G}_P(W_3') & \leftarrow &  0    \\
& & \downarrow & & \downarrow &  & \downarrow & & \\
0 & \leftarrow  &  \cF(\frd_1) & \leftarrow  & \cF(\frd_2) & \leftarrow & \cF(\frd_3) & \leftarrow &  0 \\
& & \downarrow &  & \downarrow  & &\downarrow & & \\
& & 0 &  & 0  & & 0 & &
\end{array}$

\medskip

\noindent The middle row is obviously exact. The first row coincides  by definition with the sequence
$$ 0 \leftarrow   \Ind^{G}_P(W_1')^{\frd_1}  \leftarrow  \Ind^{G}_P(W_2')^{\frd_2}  \leftarrow   \Ind^{G}_P(W_3')^{\frd_3} \leftarrow   0.$$
The following argumentation shows that it is exact on the right. Indeed, the functor $D(G)\otimes_{U(\frg,P} (\cdot)$ is right
exact and, by coadmissibility, yields strict $K$-linear maps. By the theorem of Hahn-Banach it follows that our functor $\cF^G_P$ is in particular left exact. 
So all we have to show by using the snake lemma applied to the last two rows is to prove that the columns of the diagram  are exact.
Exemplarily, we consider the middle column and abbreviate $\frd=\frd_2$ resp. $W=W_2$. The submodule $\frd$ is again an object of the category $\cO^\frp_\alg$.
Thus we have a short exact sequence
$$0 \rightarrow \bar{\frd} \rightarrow U(\frg) \otimes_{U(\frp)} \bar{W} \rightarrow \frd \rightarrow 0$$
for some finite-dimensional algebraic ${\bf P}$-representation $\bar{W}$.  Hence $\cF(\frd)=\Ind^{G}_P(\bar{W}')^{\bar{\frd}}.$
In order to show the surjectivity of $\Ind^{G}_P(W') \rightarrow \cF(\frd)$, it is enough to show
the injectivity of its dual map since the image of $\Ind^{G}_P(W') \rightarrow \cF(\frd)$ is closed by admissibility.
Applying the description in Corollary \ref{duality_cor} the dual map is given by
$$\varprojlim\nolimits_n \Ind^{G_0}_{P^n}(U(\fru^-)_n \otimes_K \bar{W}/\bar{\frd}_n) \rightarrow \varprojlim\nolimits_n \Ind^{G_0}_{P^n}(U(\fru^-)_n \otimes_K W).$$
Now the functor  $\varprojlim_n$ is left exact on projective systems of $K$-vector spaces. 
Hence it suffices to prove that for each $n\in \bN$ the map 
$g_n: U(\fru^-)_n \otimes_K \bar{W}/\bar{\frd}_n \rightarrow U(\fru^-)_n \otimes_K W$, which is induced by the map $g:U(\fru^-) \otimes_K \bar{W}/\bar{\frd}\cong \frd \hookrightarrow U(\fru^-) \otimes_K W$ by passing to completions with respect 
to the norm \ref{norm}, is injective. The left hand side $U(\fru^-)_n \otimes_K \bar{W}/\bar{\frd}_n$ carries the quotient norm. The map $g_n$ is continuous (this can been seen, e.g. by using the continuity criterion with respect to sequences). Thus its kernel is a closed subspace. On the other hand, by the same reasoning as in \cite[Lemma 3.4.4]{OS} we deduce that the natural action of $\frt$ on $U(\fru^-)$ is continuous with respect to the norm on $U(\fru^-)_n$.  Hence all assumptions are satisfied for applying \cite[Korollar 1.3.12]{F}.
The latter result says that the intersection of ${\rm ker}(g_n)$  with $U(\fru^-) \otimes_K \bar{W}/\bar{\frd}$ is non-trivial, if ${\rm \ker}(g_n)$ is non-trivial.
This is a contradiction since this intersection is nothing else but ${\rm ker}(g)=\{0\}$. The claim follows. \qed

\vskip8pt

\begin{cor}\label{nonzero} If $M \neq 0$ then $\cF^G_P(M) \neq 0$.
\end{cor}

\noindent \Pf Let $M \ra N$ be a simple quotient of $M$. By \ref{exact}, this map induces an injective map $\cF^G_P(N) \ra \cF^G_P(M)$. We can therefore assume that $M$ is simple, and it is therefore a highest weight module, generated by some vector $v^+$ with highest weight $\lambda$. Let $W \sub M$ be a finite-dimensional $\frp$-module which contains $v^+$, and hence generates $M$ as $U(\frg)$-module. The weight space of $U(\frg) \otimes_{U(\frp)} W$ with weight $\lambda$ is then the one-dimensional $K$-vector space generated by $1 \otimes v^+$. Put $\frd = \ker(U(\frg) \otimes_{U(\frp)} W \ra M)$.
Then $1 \otimes v^+$ is not contained in $\frd$, and all weights in $\frd$ are distinct from $\lambda$. In the following we consider $\frd$ as a subspace of $U(\fru_P^-) \otimes_K W$, which is $\frt_K$-stable.

With the notation introduced in \ref{yetanother}, $U(\fru_P^-)_n \otimes_K W$ is a Banach space completion of $U(\fru_P^-) \otimes_K W$. As above, let $\frd_n$ be the closure of $\frd$ in $U(\fru_P^-)_n \otimes_K W$. By the last assertion in \cite[1.3.12]{F}, the weight spaces of $\frd_n$ with respect to the $\frt_K$-action are all contained in $\frd$, and we conclude that $\frd_n$ does not contain $1 \otimes v^+$, and the class of $1 \otimes v^+$ in $\left(U(\fru_P^-)_n \otimes_K W \right)/\frd_n$ is non-zero for all $n$. Therefore, the projective limit
$$\varprojlim\nolimits_n
\Ind^{G_0}_{P^n}( U(\fru_P^-)_n\otimes_K W/\frd_n)$$


\noindent is non-zero. By \ref{duality_cor}, this projective limit is reflexive, and its continuous dual space is $\cF^G_P(M)$, which is thus non-zero as well. \qed

\setcounter{enumi}{0}

\begin{para}\label{extending} {\it Extending the functor $\cF^G_P$.} Let $P \sub G$ be a standard parabolic subgroup. 
Let $V$ be a $K$-vector space, equipped with a smooth admissible representation of the Levi subgroup $L_P \sub P$, and regard 
it via inflation as a representation of $P$. We recall that we always consider on $V$ the finest locally convex $K$-vector space topology, i.e., the locally convex inductive limit of its finite-dimensional subspaces. As such $V$ is of compact type and furnishes a locally analytic $P$-representation.

Let $M$ be an object of $\cO^\frp_\alg$ and write it as a quotient of a generalized Verma module as in \ref{basicsequence}:

\vspace{-0.3cm}
$$0 \ra \frd \ra U(\frg) \otimes_{U(\frp)} W \ra M \ra 0 \,.$$


\noindent As $W$ is a finite-dimensional space, the injective tensor product $W'\otimes_{K,\iota} V$ coincides with the projective tensor product $W'\otimes_{K,\pi} V$. Therefore we simply write $W'\otimes_K V$ for it. It is complete and we have $W'\otimes_K V = \varinjlim_H W'\otimes_K V^H$, where $H$ runs through all compact open subgroups of $P$, holds as locally convex vector spaces. Equipped with the diagonal action of $P$, cf. \ref{lemma_lift_alg}, $W' \otimes_K V$ is a locally analytic representation. We set

\vspace{-0.3cm}
\begin{numequation}\label{explicit}
\cF^G_P(M,W,V) = \Ind^G_P(W' \otimes_K V)^\frd = \{f \in \Ind^G_P(W' \otimes_K V) \midc \forall  \frz \in \frd: \langle \frz, f \rangle_{C^{an}(G,V)} = 0 \} \,,
\end{numequation}
\noindent where the pairing $\langle \cdot , \cdot \rangle_{C^{an}(G,V)}$ is defined as in \ref{pair_C(G,K)}. We are going to show that $\cF^G_P(M,W,V)$ is independent of the chosen $P$-representation $W$, in the sense of \ref{well-dfd} below. Later on we will therefore simplify notation by writing $\cF^G_P(M,V)$ instead of $\cF^G_P(M,W,V)$.
\end{para}

\vskip5pt
\begin{prop}\label{well-dfd} Let $G$, $P$, $M$, and $V$ be as above. Let $W_1 \sub W_2$ be two finite-dimensional $U(\frp)$-submodules of $M$ which generate $M$ as a $U(\frg)$-module. Then the canonical map
$\Ind^G_P(W_2' \otimes_K V) \ra \Ind^G_P(W_1' \otimes_K V)$ of  $G$-representations, induced by the map $W_2' \ra W_1'$, restricts to a topological isomorphism 

\vspace{-0.3cm}
$$\cF^G_P(M,W_2,V) \stackrel{\simeq}{\lra} \cF^G_P(M,W_1,V) \;.$$

\end{prop}

\noindent \Pf  For $i = 1,2$, let $0 \ra \frd_i \ra U(\frg)\otimes_{U(\frp)} W_i \ra M \ra 0$ be as in \ref{basicsequence}. 
It is immediate that under the (injective) map $U(\frg)\otimes_{U(\frp)} W_1 \ra U(\frg)\otimes_{U(\frp)} W_2$ induced by the inclusion $W_1 \sub W_2$ the submodule $\frd_1$ is mapped to $\frd_2$. It follows that the canonical map $\Ind^G_P(W_2' \otimes_K V) \ra \Ind^G_P(W_1' \otimes_K V)$ maps $\cF^G_P(M,W_2,V)$ into $\cF^G_P(M,W_1,V)$.

We fix a locally $L$-analytic section $s: G/P \ra G$ of the projection $G \ra G/P$ and let $\cH = s(G/P) \sub G$ be its image, so that we have an isomorphism of locally $L$-analytic manifolds $G \simeq \cH \times P$ (this isomorphism depends on $s$). Using this isomorphism, we can identify the underlying topological vector space of $\Ind^G_P(W_i' \otimes_K V)$ with
$$C^{an}_L(\cH) \hat{\otimes}_K (W_i' \otimes_K V) = C^{an}_L(\cH) \otimes_K W_i' \otimes_K V \;,$$


\noindent the completed tensor product on the left being the projective tensor product, cf. \cite[formula (56) and Remark 5.4]{K2}. This completed tensor product is equal to the ordinary tensor product, as $W' \otimes_K V$ is an inductive limit of finite-dimensional vector spaces. Using this isomorphism, which is natural in $W_i'$ and $V$, we
identify $f \in \Ind^G_P(W_i' \otimes_K V)$ with $\psi_f = \sum_k \psi_k \otimes \phi_k \otimes v_k \in C^{an}_L(\cH) \otimes_K W_i' \otimes_K V$, where $\psi_k \in C^{an}_L(\cH)$, $\phi_k \in W_i'$, $v_k \in V$. Then, for $f$ to be annihilated by $\frd_i$ translates into a condition on $\psi_f$ which is only a condition 
on $\sum_k \psi_k \otimes \phi_k$. With these identifications we find an isomorphism of $K$-vector spaces, natural in $W_i'$ and $V$,

\vspace{-0.3cm}
$$\Ind^G_P(W_i' \otimes_K V)^{\frd_i} \stackrel{\simeq}{\lra} \Big(C^{an}_L(\cH) \otimes_K W_i'\Big)^{\frd_i} \otimes_K V \;.$$


\noindent We get therefore a commutative diagram

\vspace{-0.3cm}
$$\begin{array}{ccccc}
\Ind^G_P(W_2' \otimes_K V)^{\frd_2} & \stackrel{\simeq}{\lra} & \Big(C^{an}_L(\cH) \otimes_K W_2'\Big)^{\frd_2} \otimes_K V & \stackrel{\simeq}{\longleftarrow} & \Ind^G_P(W_2')^{\frd_2} \otimes_K V\\
\downarrow & & & & \downarrow\\
\Ind^G_P(W_1' \otimes_K V)^{\frd_1} & \stackrel{\simeq}{\lra} & \Big(C^{an}_L(\cH) \otimes_K W_1'\Big)^{\frd_1} \otimes_K V  & \stackrel{\simeq}{\longleftarrow} & \Ind^G_P(W_1')^{\frd_1} \otimes_K V\\
\end{array}$$

\noindent The vertical map on the right is an isomorphism because the map $\Ind^G_P(W_2')^{\frd_2} \ra 
\Ind^G_P(W_1')^{\frd_1}$ is an isomorphism of topological vector spaces, by \ref{basiciso}. The map on the left must hence be an isomorphism too.
\qed

\vskip5pt

\begin{para} Let $M \in \cO^\frp_\alg$ and $V$ be as above. Given two finite-dimensional $U(\frp)$-submodules $W_1, W_2 \sub M$, which both generate $M$ as a $U(\frg)$-module, we let $W_3 \sub M$ be any finite-dimensional $U(\frp)$-submodule containing both $W_1$ and $W_2$
(e.g., $W_3 = W_1 + W_2$). By \ref{well-dfd} the canonical maps

\vspace{-0.3cm}
\begin{numequation}\label{canonical-isos}
\cF^G_P(M,W_2,V) \longleftarrow \cF^G_P(M,W_3,V) \lra \cF^G_P(M,W_1,V)
\end{numequation}
\noindent are both isomorphisms of locally analytic $G$-representations. We have therefore a canonical isomorphism $\cF^G_P(M,W_2,V) \lra \cF^G_P(M,W_1,V)$ which does not depend on the choice of $W_3$. This is the unique isomorphism which is obtained from choosing any $W_3$ as above and inverting the map on the left of the resulting diagram \ref{canonical-isos}. Via these uniquely specified isomorphisms we can henceforth identify all representations $\cF^G_P(M,W,V)$ and write $\cF^G_P(M,V)$ for any one of them.
\end{para}

\vskip5pt

\begin{prop}\label{functor} $\cF^G_P(\cdot, \cdot)$ is a bi-functor 

\vspace{-0.3cm}
$$\cO^\frp_{\alg} \times \Rep^{\infty,{\rm adm}}_K(L_P) \longrightarrow  \Rep_K^{\ell a}(G) \,, \; (M,V) \mapsto \cF^G_P(M,V) \,,$$

\noindent which is contravariant in $M$ and covariant in $V$. Here $\Rep^{\infty,{\rm adm}}_K(L_P)$ is the category of smooth admissible representations of the Levi subgroup $L_P \sub P$  on $K$-vector spaces.
\end{prop}

\noindent \Pf Let $\alpha: M \ra N$ be a morphism in $\cO^\frp_\alg$, and let $\beta: U \ra V$ be a morphism in $\Rep^{\infty,{\rm adm}}_K(L_P)$. Choose a finite-dimensional $U(\frp)$-submodule $W_M \sub M$ which generates $M$ as a $U(\frg)$-module. Then choose a finite-dimensional $U(\frp)$-submodule $W_N \sub N$ which generates $N$ as a $U(\frg)$-module and which contains $\alpha(W_M)$. With the notation as in \ref{basicsequence} we get a commutative diagram

\vspace{-0.3cm}
$$\begin{array}{ccccccccc}
0 & \ra & \frd_M & \ra & U(\frg)\otimes_{U(\frp)} W_M & \ra & M & \ra & 0\\
& & & & \downarrow & & \downarrow & & \\
0 & \ra & \frd_N & \ra & U(\frg)\otimes_{U(\frp)} W_N & \ra & N & \ra & 0
\end{array} $$

\noindent It follows that the map in the middle maps $\frd_M$ into $\frd_N$. This shows that the map 

\vspace{-0.3cm}
$$\Ind^G_P(W_N' \otimes_K U) \lra \Ind^G_P(W_M' \otimes_K V)$$

\noindent  induced by $\alpha' \otimes \beta:  W_N' \otimes_K U \lra W_M' \otimes_K V$ maps $\cF^G_P(N,U)$ into $\cF^G_P(M,V)$.
\qed

\vskip5pt
\begin{prop}\label{general admissibility}
(i) For all $M \in \cO^\frp_\alg$, and for all smooth admissible $L_P$-representations $V$ the $G$-representation $\cF^G_P(M,V)$ is admissible.\vskip5pt

(ii) If $V$ is of finite length, then $\cF^G_P(M,V)$ is strongly admissible for all $M \in \cO^\frp_\alg$. 

\end{prop}

\noindent \Pf (i) By \ref{tensorprod} (i), the representation $\Ind^G_P(W' \otimes_K V)$ is admissible. By \cite[6.4 iii.]{ST2}, the closed subrepresentation $\cF^G_P(M,V) = \Ind^G_P(W' \otimes_K V)^\frd$ is admissible as well.

(ii) If $V$ is of finite length, then $\Ind^G_P(W' \otimes_K V)$ is strongly admissible by \ref{tensorprod} (ii). As a closed subrepresentation of $\Ind^G_P(W' \otimes_K V)$, the representation $\cF^G_P(M,V)$ is strongly admissible, cf. \cite[3.5]{ST1}. \qed

\vskip8pt

\begin{prop}\label{exact_in_both} a) The bi-functor $\cF^G_P$ is exact in both arguments.

\vskip5pt

\noindent b) If $Q \supset P$ is a parabolic subgroup, $\frq = \Lie(Q)$, and $M$ an object of $\cO^\frq_\alg$, then

\vspace{-0.3cm}
$$\cF^G_P(M,V) = \cF^G_Q(M,i^{L_Q}_{L_P(L_Q \cap U_P)}(V)) \,,$$

\noindent where $i^{L_Q}_{L_P(L_Q \cap U_P)}(V)=i^Q_P(V)$  denotes the corresponding induced representation in the category of smooth representations.
\end{prop}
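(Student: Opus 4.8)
\textbf{Proof plan for Proposition \ref{exact_in_both}.}

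For part a), the plan is to reduce the exactness statements to what has already been established. Exactness in the first argument follows by essentially the same argument as in Proposition \ref{exact}: given a short exact sequence $0 \to M_1 \to M_2 \to M_3 \to 0$ in $\cO^\frp_\alg$, I would choose compatible generating $\bP$-representations $W_i$ fitting into $0 \to W_1 \to W_2 \to W_3 \to 0$, form the associated diagram of generalized Verma modules and kernels $\frd_i$, and then tensor everything with the fixed smooth admissible $V$. Since $W' \otimes_K V = \varinjlim_H W' \otimes_K V^H$ and each $V^H$ is finite-dimensional, the dualized statement becomes a projective limit over $H$ of the exactness statements already proved, and the topological Mittag-Leffler argument (exactness of $\varprojlim$ on Fr\'echet systems with dense transition maps, \cite[13.2.4]{EGA}) carries over verbatim. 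For exactness in the second argument, fix $M \in \cO^\frp_\alg$ with its presentation $0 \to \frd \to U(\frg)\otimes_{U(\frp)} W \to M \to 0$, and let $0 \to V_1 \to V_2 \to V_3 \to 0$ be a short exact sequence of smooth admissible $L_P$-representations. The key point is that $\cF^G_P(M,V)$ is the closed subspace $\Ind^G_P(W'\otimes_K V)^\frd$ of $\Ind^G_P(W'\otimes_K V)$, and $\Ind^G_P(W'\otimes_K(\cdot))$ is exact on smooth admissible representations (smooth induction is exact, and tensoring by the fixed finite-dimensional $W'$ preserves exactness). Passing to the $\frd$-fixed part then requires only left-exactness of $(\cdot)^\frd$, which is formal, together with right-exactness, which one checks on duals via Proposition \ref{DualityII} exactly as in the proof of Proposition \ref{exact}: the relevant maps $U(\fru_P^-)_n \otimes (W\otimes (V_i^H)')/\frd_n \to U(\fru_P^-)_n \otimes (W \otimes (V_i^H)')$ are completions of injective maps, hence injective by \cite[Cor. 6, 1.19]{BGR}.

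For part b), the plan is to compare the two inductions directly on the level of the underlying spaces of functions, since the differential-equation condition $\frd$ is the same on both sides (here $M$ lies in $\cO^\frq_\alg \subset \cO^\frp_\alg$, so we may take the {\it same} finite-dimensional generating module $W$, now an algebraic $\bQ$-representation, and the {\it same} $\frd$). First I would treat the smooth factor: the elementary transitivity of parabolic induction in the smooth category gives $i^G_P(V) = i^G_Q(i^{L_Q}_{L_P(L_Q\cap U_P)}(V))$ for any smooth $V$. The content of the statement is that this transitivity is compatible with the algebraic twist by $W'$ and with imposing $\frd$. Concretely, $W'$ extends to an algebraic $\bQ$-representation; writing $\Ind^G_P(W'\otimes_K V)$ and unwinding the induction in stages through $Q$, one identifies $\Ind^G_P(W'\otimes_K V) \cong \Ind^G_Q\big(W' \otimes_K i^{L_Q}_{L_P(L_Q\cap U_P)}(V)\big)$ as locally analytic $G$-representations — the point being that restricting a function on $G$ satisfying the $P$-equivariance to its values along $Q$ produces a function on $Q$ that is exactly a $W'$-twisted smooth induction from $P$ to $Q$, because the algebraic part $W'$ already makes sense as a $\bQ$-representation and only the smooth part $V$ genuinely gets induced. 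Under this identification the subspace cut out by $\frd$ on the left corresponds to the subspace cut out by the same $\frd$ on the right, because the pairing $\langle \cdot,\cdot\rangle_{C^{an}(G,-)}$ and the right regular $\frg$-action are intrinsic to $G$ and do not see the stages of the induction; hence $\cF^G_P(M,V) = \cF^G_Q(M, i^{L_Q}_{L_P(L_Q\cap U_P)}(V))$.

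The main obstacle I anticipate is the bookkeeping in part b): making the identification $\Ind^G_P(W'\otimes_K V) \cong \Ind^G_Q(W'\otimes_K i^Q_P(V))$ precise as an isomorphism of topological $G$-representations, and then verifying carefully that the $\frd$-conditions match up under it. The subtlety is that $W'$ appears on both sides as the {\it same} algebraic $\bQ$-representation, whereas in a naive application of induction-in-stages one would be tempted to induce $W'|_P$ up to $Q$ as well; one must check that the algebraic extendability of $W'$ to $\bQ$ (which holds precisely because $M \in \cO^\frq_\alg$, by Lemma \ref{lemma_lift_alg} applied to $\frq$) is what allows the twist to be pulled out of the smooth induction. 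Once this compatibility is set up, the equality of the $\frd$-fixed subspaces is formal, since $\frd \subset U(\frg)\otimes_{U(\frp)} W$ and the pairing into $C^{an}(G,-)$ are literally unchanged. Part a) should be routine given the tools already developed; the only mild care needed there is the interchange of the $\varinjlim_H$ over compact open subgroups with the formation of the $\frd$-fixed part, which is handled exactly as in the displayed chain of equalities preceding the statement.
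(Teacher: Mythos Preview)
Your plan for part a) is correct but considerably more elaborate than what the paper does. The paper observes in one line that as $K$-vector spaces $\cF^G_P(M,V) = \cF^G_P(M) \otimes_K V$, whence exactness in both arguments follows immediately from Prop.~\ref{exact} and flatness over a field. Your Mittag--Leffler rerun is not wrong, just unnecessary.

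For part b), however, there is a genuine gap. Your claimed identification
\[
\Ind^G_P(W'\otimes_K V)\;\cong\;\Ind^G_Q\big(W'\otimes_K i^Q_P(V)\big)
\]
is \emph{false} before imposing any differential equations. Induction in stages gives $\Ind^G_P(W'\otimes V)=\Ind^G_Q\big(\Ind^Q_P(W'\otimes V)\big)$, and the projection formula (using that $W'$ extends to $Q$) gives $\Ind^Q_P(W'\otimes V)\cong W'\otimes \Ind^Q_P(V)$ with the \emph{locally analytic} induction $\Ind^Q_P(V)$, not the smooth one $i^Q_P(V)$. These differ substantially (already $\Ind^G_B(\mathbf 1)\neq i^G_B(\mathbf 1)$). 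Relatedly, your assertion that one can use ``the same $\frd$'' on both sides is not correct: with the same $W$ one has two different kernels $\frd_P=\ker\big(U(\frg)\otimes_{U(\frp)}W\to M\big)$ and $\frd_Q=\ker\big(U(\frg)\otimes_{U(\frq)}W\to M\big)$, and it is precisely the extra piece $\frk=\ker\big(U(\frg)\otimes_{U(\frp)}W\to U(\frg)\otimes_{U(\frq)}W\big)\subset\frd_P$ that cuts $\Ind^Q_P(V)$ down to $i^Q_P(V)$. So the step you flag as ``bookkeeping'' is in fact the entire content of the statement.

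The paper avoids this by working on the dual side. It identifies $\cF^G_P(M,V)'=D(G)\otimes_{U(\frg,P),\theta_*}(M\hat\otimes_K V')$ and then proves, via explicit mutually inverse maps $\Phi,\Psi$ built from $\theta_*$ and $\check\theta_*$, a canonical $U(\frg,Q)$-isomorphism
\[
U(\frg,Q)\otimes_{U(\frg,P),\theta_*}(M\hat\otimes_K V')\;\cong\;M\hat\otimes_K\big(U(\frg,Q)\otimes_{U(\frg,P)}V'\big),
\]
together with the lemma $U(\frg,Q)\otimes_{U(\frg,P)}V'=i^Q_P(V)'$. This last lemma is exactly where the passage from locally analytic to smooth induction happens: it uses that $\frq$ acts trivially on $V'$, so tensoring over $U(\frg,P)$ up to $U(\frg,Q)$ kills the analytic directions along $L_Q\cap U_P^-$. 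If you want to salvage your direct approach, you must prove the analogue of this lemma on the function side, i.e.\ show that the $\frk$-part of the $\frd_P$-condition forces functions in $\Ind^Q_P(W'\otimes V)$ to lie in $W'\otimes i^Q_P(V)$; only then do the remaining $\frd_Q$-conditions match.
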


\vskip5pt

\noindent \Pf a) The arguments here are similar to the ones used in the proof of \ref{well-dfd}. We fix a locally $L$-analytic section $s: G/P \ra G$ of the projection $G \ra G/P$ and let $\cH = s(G/P) \sub G$ be its image. Then we obtain (as in the proof of \ref{well-dfd}) an isomorphism of $K$-vector spaces, which is natural in $W'$ and $V$,

\vspace{-0.3cm}
$$\Ind^G_P(W' \otimes_K V)^\frd \cong \Big(C^{an}_L(\cH) \otimes_K W'\Big)^\frd \otimes_K V \;.$$


\noindent This shows that $\cF^G_P$ is exact in the second argument. That it is exact in the first argument is Prop. \ref{exact}.

\vskip8pt

b) Let $M$ be an object of $\cO^\frq_\alg$, and let $W \sub M$ be a finite-dimensional $\frq_K$-submodule which generates 
$M$ as a $U(\frg)$-module. Consider the canonical map  $U(\frg)\otimes_{U(\frq)} W \ra M$, and denote by $\frd$ its kernel. We have the 
tautological exact sequence
\begin{numequation}\label{tautexseq}
0 \lra \frd \lra U(\frg)\otimes_{U(\frq)} W \lra M \lra 0 \;.
\end{numequation}
\noindent Similarly, let ${\bf 1}$ the trivial one-dimensional representation (of $\frq_K$ or $\frp_K$), let $U(\frq) \otimes_{U(\frp)} {\bf 1} \ra {\bf 1}$ be the canonical map, $\frd^\frq_\frp$ its kernel, and consider the corresponding exact sequence
\begin{numequation}\label{indPQ}
0 \lra \frd^\frq_\frp \lra U(\frq) \otimes_{U(\frp)} {\bf 1} \lra {\bf 1} \lra 0 \;.
\end{numequation}
\noindent There is a canonical isomorphism $U(\frq)\otimes_{U(\frp)} W \stackrel{\simeq}{\lra} W \otimes_K (U(\frq) \otimes_{U(\frp)} {\bf 1})$ which sends $1 \otimes w$ to $w \otimes (1 \otimes 1)$ (cf. \cite[Prop. 6.5]{Kn}). This isomorphism, together with \ref{indPQ} shows that there is a canonical exact sequence
\begin{numequation}\label{indPQW}
0 \lra W \otimes_K \frd^\frq_\frp \lra W \otimes_K (U(\frq) \otimes_{U(\frp)} {\bf 1}) \simeq U(\frq)\otimes_{U(\frp)} W \lra W \lra 0 \;.
\end{numequation}
\noindent Inducing \ref{indPQW} from $U(\frq)$ to $U(\frg)$ (which is an exact functor by the PBW theorem) gives
the exact sequence
\begin{numequation}\label{indQG}
0 \lra U(\frg) \otimes_{U(\frq)} ( W \otimes_K \frd^\frq_\frp) \lra  U(\frg) \otimes_{U(\frp)} W \lra U(\frg) \otimes_{U(\frq)} W \lra 0 \;.
\end{numequation}
\noindent If we denote the kernel of the natural map $U(\frg) \otimes_{U(\frp)} W \ra M$
by $\tilde{\frd}$, we have the exact sequence
\begin{numequation}\label{indPQZ}
0 \lra \tilde{\frd} \lra U(\frg)\otimes_{U(\frp)} W \lra M \lra 0 \;,
\end{numequation}
\noindent and from \ref{indQG} and \ref{tautexseq} we deduce the exact sequence
\begin{numequation}\label{kernel}
0 \lra U(\frg) \otimes_{U(\frq)} ( W \otimes_K \frd^\frq_\frp) \lra \tilde{\frd} \lra \frd \lra 0 \;.
\end{numequation}
\noindent Furthermore, there is a canonical isomorphism
\begin{numequation}\label{transofind}
\Ind^G_P(W' \otimes_K V) \simeq \Ind^G_Q\Big(\Ind^Q_P (W' \otimes_K V)\Big) \;.
\end{numequation}
\noindent This follows from \cite[5.3 and 2.6]{K2}. Let $(\phi_1,\ldots, \phi_d)$ be a fixed basis of $W'$. Then, for every $q \in Q$, the family $(q^{-1}.\phi_1, \ldots, q^{-1}.\phi_d)$ 
is a basis of $W'$, and given $f \in \Ind^Q_P(W' \otimes_K V)$, we can write 
$$f(q) = (q^{-1}.\phi_1) \otimes \psi_1(q) +  \ldots + (q^{-1}.\phi_d) \otimes \psi_d(q)$$ 
with uniquely determined $\psi_i(q) \in V$. It is straightforward to check that $\psi_i \in \Ind^Q_P(V)$, and that

\vspace{-0.3cm}
$$\Ind^Q_P(W' \otimes_K V) \stackrel{\simeq}{\lra} W' \otimes_K \Ind^Q_P(V) \;, \;\, f \mapsto \phi_1 \otimes \psi_1 + \ldots + \phi_d \otimes \psi_d \;,$$


\noindent is an isomorphism of locally analytic $Q$-representations; the inverse map sends the element $\phi \otimes \psi \in W' \otimes_K \Ind^Q_P(V)$ to $[g \mapsto (g^{-1}.\phi) \otimes \psi(g)] \in \Ind^Q_P(W' \otimes_K V)$. From now on we identify $\Ind^G_P(W' \otimes_K V)^{\tilde{\frd}}$ with $\Ind^G_Q\Big(W' \otimes_K \Ind^Q_P(V)\Big)^{\tilde{\frd}}$.
Suppose now that $f: G \ra W' \otimes_K \Ind^Q_P(V)$ lies in $\Ind^G_Q\Big(W' \otimes_K \Ind^Q_P(V)\Big)^{\tilde{\frd}}$.
Because $\tilde{\frd}$ contains $W \otimes_K \frd^\frq_\frp$ (cf. \ref{kernel}), it follows that $f$ takes values in

\vspace{-0.3cm}
$$W' \otimes_K \Ind^Q_P(V)^{\frd^\frq_\frp} = W' \otimes_K i^Q_P(V) \;.$$


\noindent This, together with \ref{kernel}, shows that

\vspace{-0.3cm}
$$\cF^G_P(M,V) = \Ind^G_Q\Big(W' \otimes_K \Ind^Q_P(V)\Big)^{\tilde{\frd}} =
\Ind^G_Q\Big(W' \otimes_K i^Q_P(V)\Big)^\frd = \cF^G_Q\Big(M,i^{L_Q}_{L_P(L_Q \cap U_P)}(V)\Big) \;.$$
\qed

\vskip8pt

\begin{rmk}\label{Remark}
We remark that also the description in Cor. \ref{duality_cor} has a counterpart in the 'enriched' version, i.e., including a smooth representation $V$.
Indeed, let $V$ be as before a smooth admissible $L_P$-representation. By letting act $U_P$ trivially on $V$,  it has the structure of a $P$-representation. Then we get a topological isomorphism
\begin{eqnarray*}
\Ind^{G_0}_{P_{0}}(W'\otimes_K V)^\frd &  \cong & \Big(\varprojlim\nolimits_n{\rm Ind}^{G_0}_{P^n}\big( (U(\fru_P^{-})_n \otimes_K W) \hat{\otimes}_K V'/\frd_n\big)\Big)'  .
\end{eqnarray*}
Indeed, for the proof we write again
$W'\otimes_K V =  \varinjlim\nolimits_H W' \otimes_K V^H.$ Here $H$ runs through the set of all normal compact  open subgroups of $(L_P)_0.$ Each subspace $W'\otimes_K V^H$ is stable by the action of $P_0.$
Moreover, we have $(W'\otimes_K V^H)'\cong W\otimes_K (V^H)'$ as $V^H$ is finite-dimensional (cf. \cite[Prop. 20.13]{S1}).
By the very definition of $C^{an}(G,W'\otimes V)$ \cite{ST1}  and again by the  finite-dimensionality of $V^H$ we see  by using a variant of Prop. \ref{DualityII} that
\begin{eqnarray*}
\Ind^{G_0}_{P_{0}}(W'\otimes_K V)^\frd & = &\varinjlim\nolimits_n \varinjlim\nolimits_H  {\rm Ind}^{G_0}_{P^{n}} (\cO(U_P^{-,n}) \otimes_K W' \otimes_K V^H)^\frd \\  &  = & \Big(\varprojlim\nolimits_n\varprojlim\nolimits_H {\rm Ind}^{G_0}_{P^{n}}( U(\fru_P^{-})_n \otimes_K W \otimes_K (V^H)'/\frd_n)\Big)' \\
&=&   \Big(\varprojlim\nolimits_n{\rm Ind}^{G_0}_{P^n}\big(((U(\fru_P^{-})_n \otimes_K W) \hat{\otimes}_K V'/\frd_n\big)\Big)'.
\end{eqnarray*}
\end{rmk}

\bigskip

\setcounter{enumi}{0}

\begin{para}\label{locanBGG}{\it Locally analytic {\rm BGG}-resolutions.} Recall that
for a character $\lambda \in X^\ast({\bf T})$, we denote by $M(\lambda)=U(\frg) \otimes_{U(\frb)} K_\lambda$ the corresponding Verma module and by $L(\lambda)$ its simple quotient.
Let $\Delta$ be the set of simple roots, $\Phi$
the set of  roots and $\Phi^+$ the set of positive roots of ${\bf G}$ with respect to our data  ${\bf T\subset B}$. Let
$$X_+=\{\lambda \in  X^\ast({\bf T}) \mid  \forall \alpha \in \Delta: (\lambda,\alpha^\vee )\geq 0 \}$$
\noindent be the set of dominant weights in $X^\ast({\bf T}).$
If $\lambda \in X_+$, then $L(\lambda)$ is finite-dimensional and  comes from an irreducible algebraic $G$-representation. In this situation, we also write $V(\lambda)$ for $L(\lambda).$  Finally, let $W$ be the Weyl group of ${\bf G}$ and denote by $w_0\in W$ its longest element with respect to the Bruhat order.

Denote by $\cdot$ the dot action of $W$ on $X^\ast({\bf T})$  given by
$$w\cdot \lambda= w(\lambda+\rho)-\rho \;,$$
where $\rho=\frac{1}{2}\sum_{\alpha \in \Phi^+} \alpha$.
For a dominant weight $\lambda\in X_+$, the {\rm BGG}-resolution of the finite-dimensional $G$-module $V(\lambda)$ has the following shape

\begin{numequation}\label{BGG}
0 \rightarrow  M(w_0\cdot \lambda) \rightarrow \bigoplus_{w\in W \atop \ell(w)=\ell(w_0)-1} M(w\cdot \lambda) \rightarrow \dots \rightarrow \bigoplus_{w\in W \atop \ell(w)=1} M(w\cdot \lambda) \rightarrow M(\lambda) \rightarrow
V(\lambda)\rightarrow 0.
\end{numequation}

\noindent We refer to \cite{Ku} for the definition of the differentials in this complex.
By applying our exact functor $\cF^G_B$, we get a resolution

\addtocounter{enumi}{1}
\begin{multline}
0 \leftarrow \cF^G_{B}( M(w_0\cdot \lambda)) \leftarrow \bigoplus_{w\in W\atop \ell(w)=\ell(w_0)-1} \cF^G_B(M(w\cdot \lambda))
\leftarrow \dots \\
 \dots \leftarrow \bigoplus_{w\in W \atop \ell(w)=1} \cF^G_B(M(w\cdot \lambda)) \leftarrow \cF^G_B (M(\lambda))
\leftarrow
\cF^G_B(V(\lambda))\leftarrow 0 \;,
\end{multline}
\noindent which by Prop. \ref{exact_in_both} coincides with

\addtocounter{enumi}{1}

\begin{multline}\label{lcBGG}
0 \leftarrow \Ind^{G}_{B}((w_0\cdot \lambda)^{-1}) \leftarrow \bigoplus_{w\in W\atop \ell(w)=\ell(w_0)-1} \Ind^{G}_B((w\cdot \lambda)^{-1})\leftarrow \dots \\
\dots \leftarrow \bigoplus_{w\in W \atop \ell(w)=1} \Ind^{G}_B((w\cdot \lambda)^{-1}) \leftarrow  \Ind^{G}_B(\lambda^{-1}) \leftarrow V(\lambda)\otimes i^G_B(K) \leftarrow 0 \;.
\end{multline}

There is also a parabolic version of the {\rm BGG}-resolution due to Lepowsky \cite{Le}.
Let $P=P_I\subset G, \,I\subset \Delta$ be a standard parabolic subgroup and let $W_I\subset W$ be the subgroup generated by the 
reflections in $I$. Let
$$X^+_I=\{\lambda \in X^\ast({\bf T}) \mid \forall \alpha \in I: (\lambda,\alpha) \geq 0\}$$
be the set of $L_I$-dominant weights.
In particular $X^+ \subset X_I^+$. Let $^IW=W_I\backslash W$ which we identify with the representatives  of shortest length in $W$.
Let $^Iw_0$ be the element of maximal length in $^IW$. If $\lambda$ is in $X_+$ and $w\in {}^IW$ then $w\cdot \lambda \in X_I^+$, cf. \cite[p. 502]{Le}. The $I$-parabolic {\rm BGG}-resolution of $V(\lambda)$ is given by a sequence
$$0 \rightarrow  M_I({}^Iw_0\cdot \lambda) \rightarrow \bigoplus_{w\in{}^IW\atop \ell(w)=\ell({}^Iw_0)-1} M_I(w\cdot \lambda) \rightarrow \dots
\rightarrow \bigoplus_{w\in{}^IW \atop \ell(w)=1} M_I(w\cdot \lambda) \rightarrow M_I(\lambda) \rightarrow
V(\lambda)\rightarrow 0 \;.$$
Again, we refer to \cite{Ku} for the definition of the differentials in this complex. By applying our exact functor $\cF^G_P$, we get a resolution
\addtocounter{enumi}{1}
\begin{multline}
0 \leftarrow \cF^G_P( M_I({}^Iw_0\cdot \lambda)) \leftarrow \bigoplus_{w\in {}^IW\atop \ell(w)=\ell({}^Iw_0)-1} \cF^G_P(M_I(w\cdot \lambda))
\leftarrow \dots \\
\dots \leftarrow \bigoplus_{w\in{} ^IW \atop \ell(w)=1} \cF^G_P(M_I(w\cdot \lambda)) \leftarrow \cF^G_P (M_I(\lambda))
\leftarrow
\cF^G_P(V(\lambda))\leftarrow 0 
\end{multline}
which coincides by Prop. \ref{exact_in_both} with
\addtocounter{enumi}{1}
\begin{multline}
0 \leftarrow \Ind^{G}_P(V_I({}^Iw_0\cdot \lambda)') \leftarrow \bigoplus_{w\in {}^IW\atop \ell(w)=\ell({}^Iw_0)-1} \Ind^G_P(V_I(w\cdot \lambda)')\leftarrow \dots \\
\dots \leftarrow \bigoplus_{w\in{}^IW \atop \ell(w)=1} \Ind^G_P(V_I(w\cdot \lambda)')\leftarrow  \Ind^G_P(V_I(\lambda)') \leftarrow V(\lambda)\otimes i^G_P(K) \leftarrow 0.
\end{multline}
\end{para}

\begin{exam} Let $\bG ={\rm SL}_2$, $\bT \sub \bG$ the diagonal split torus, and $\lambda \in X^\ast(\bT)$ the trivial character. Thus $L(\lambda) = K$ is the trivial representation. Put $\lambda' = s \cdot \lambda = s(\rho)-\rho = -2\rho \in X^\ast(\bT)$, where $s \in W$ is the non-trivial element. The {\rm BGG}-resolution of the trivial representation is the short exact sequence
$$0\rightarrow M(\lambda') \rightarrow M(\lambda) \rightarrow L(\lambda) \rightarrow 0.$$
By applying the functor $\cF^G_B$ to this sequence, we obtain an exact sequence
$$0\rightarrow i^G_B(K) \rightarrow \Ind^{G}_B(K) \rightarrow \Ind^{G}_B((\lambda')^{-1}) \rightarrow 0 $$
The last map coincides with the derivative map, cf. \cite[\S 6]{ST1}, \cite{Mo2}.
\end{exam}

\section{Irreducibility results}\label{irredresults}

The results in this section are valid under the following assumption on the residue characteristic $p$ of $L$: \vskip8pt

\begin{assumption}\label{hyp} If the root system $\Phi = \Phi(\frg,\frt)$ has irreducible components of type $B$, $C$ or $F_4$, we assume $p > 2$, and if $\Phi$ has irreducible components of type $G_2$, we assume that $p > 3$.
\end{assumption}

In section \ref{HWM} we prove certain results on highest weight modules under the same assumption, and these results are used in this section, which is why this restriction is imposed here. It might be possible that a more refined analysis in section \ref{HWM} shows that one can obtain results which are actually sufficient for our purposes here, without assuming \ref{hyp}. \vskip8pt

\begin{dfn}\label{maximal} Let $M$ be an object of the category $\cO$. We call a standard parabolic subalgebra $\frp$ {\it maximal for $M$} if $M \in \cO^\frp$ and if $M \notin \cO^\frq$ for all parabolic subalgebras $\frq$ strictly containing $\frp$.
\end{dfn}

It follows from \cite[sec. 9.4]{H2} that for every object $M$ of $\cO$ there is unique standard parabolic subalgebra $\frp$ which is maximal for $M$.
The same definition applies for objects in the subcategory $\cO_{\alg}$ in which case we also say that the standard parabolic subgroup $P$ corresponding to $\frp$ is {\it maximal for $M$}. In that case $M$ lies in $\cO^\frp_\alg$, by \ref{T-alg implies L-alg}. \vskip8pt

\begin{thm}\label{irredG_0} Suppose \ref{hyp} holds. Let $M \in \cO_\alg^\frp$ be simple and assume that $P$ is maximal for $M$. Then

\vskip8pt

(i) $D(G_0) \otimes_{D(\frg,P_0)}M$ is simple as a $D(G_0)$-module.

\vskip5pt

(ii) $\cF^G_P(M) = \Ind^G_P(W')^\frd$ is topologically irreducible as a $G_0$-representation.

\vskip5pt

(iii) $\cF^G_P(M) = \Ind^G_P(W')^\frd$ is topologically irreducible as a $G$-representation.
\end{thm}

\noindent \Pf By the remark after \ref{equivalence}, part (i) implies part (ii), and part (ii) implies obviously part (iii). Assertion (i) is a special of Theorem \ref{irredH} below if we take $H = G_0$. \qed

\vskip12pt

\begin{para}\label{generalizations} {\it Generalization to open normal subgroups.} It will be useful later 
(in the proof of \ref{irredgeneral}), to have a similar criterion for the irreducibility of subrepresentations of $\cF^G_P(M)|_H$, 
where $H$ is an arbitrary open normal subgroup of $G_0$. For each $g \in G_0,$ the subspace

$$\left\{f \in \Ind^{G_0}_{P_0}(W')^\frd \midc \supp(f) \sub H g P_0\right\}$$

\vskip8pt

\noindent of $\Ind_{P_0}^{G_0}(W')^\frd$ is closed and stable under the action of $H$, and we therefore have a decomposition of $H$-representations

$$\Ind_{P_0}^{G_0}(W')^\frd|_H \; = \; \bigoplus_{g \in H\bksl G_0/P_0}
\left\{f \in \Ind^{G_0}_{P_0}(W')^\frd \midc \supp(f) \sub H g P_0\right\} \;.$$

\vskip8pt

\noindent (The $H$-representation $\Ind_{P_0}^{G_0}(W')^\frd|_H$ is local in the sense that a function $f$, when restricted to a double coset $HgP_0$ and extended by zero, call this function $(f|_{HgP_0})_!$, still lies in that space. This is obviously the case for the induced representation $\Ind_{P_0}^{G_0}(W')$. The condition to be annihilated by the differential operators $\frz \cdot_r$ with in $\frz \in \frd$ is a condition on the germ of $f$ at any element in $G_0$. Hence the function $(f|_{HgP_0})_!$ belongs to $\Ind_{P_0}^{G_0}(W')^\frd$ as well.)

We define the representation $\Ind^{HP_0}_{P_0}(W')^\frd$ as in \ref{basicdfn}. Extending functions on $HP_0$ by zero gives an isomorphism of $H$-representations

$$\Ind^{HP_0}_{P_0}(W')^\frd \cong \left\{f \in \Ind^{G_0}_{P_0}(W')^\frd \midc \supp(f) \sub H P_0\right\} \;.$$

\vskip8pt

\noindent For $g \in G_0$, we denote by $\Ind^{HP_0}_{P_0}(W')^{\frd,g}$ the space $\Ind^{HP_0}_{P_0}(W')^\frd$ equipped with the $H$-action defined by

$$(h\cdot_g f)(x) = f(g^{-1}h^{-1}g x) \;.$$


\noindent The map

$$\begin{array}{ccc} \Ind^{HP_0}_{P_0}(W')^{\frd,g} & \lra & \left\{f \in \Ind^{G_0}_{P_0}(W')^\frd \midc \supp(f) \sub H g P_0\right\} \;, \\
& &\\
f & \longmapsto & x \mapsto \left\{\begin{array}{ccl} f(g^{-1}x) & , & x \in H g P_0\\
0 & , & \mbox{else}   \end{array} \right. \end{array}$$

\vskip8pt

\noindent is then an isomorphism of $H$-representations. In the theorem below we use the following notation. 
For a $D(H)$-module $N$ and $g \in G_0,$ we denote by $\delta_g \star N$ the space $N$, equipped with the structure of a $D(H)$-module given by $\delta \cdot_g n = (\delta_{g^{-1}} \delta \delta_g)n$, where $n \in N$, $\delta \in D(H)$, and the product $\delta_{g^{-1}} \delta \delta_g$, is an element of $D(G_0)$, and is an element of $D(H)$.
\end{para}

\setcounter{enumi}{0}

\begin{thm}\label{irredH} Suppose \ref{hyp} holds. Let $M \in \cO_\alg^\frp$ be simple and assume that $\frp$ is maximal for $M$. Let $H$ be an open normal subgroup of $G_0$, and let $g, g_1, g_2 \in G_0$. Then

\vskip8pt

(i) The dual space of $\Ind^{HP_0}_{P_0}(W')^{\frd,g}$ is isomorphic to $\delta_g \star \left(D(HP_0) \otimes_{D(\frg,P_0)} M\right)$ as $D(H)$-module.

(ii) The $D(H)$-module $\delta_g \star \left(D(HP_0) \otimes_{D(\frg,P_0)} M\right)$ is simple.

(iii) The $D(H)$-modules $\delta_{g_1} \star \left(D(HP_0) \otimes_{D(\frg,P_0)} M\right)$ and $\delta_{g_2} \star \left(D(HP_0) \otimes_{D(\frg,P_0)} M\right)$ are isomorphic if and only if $g_1 HP_0 = g_2 HP_0$.

(iv) The $H$-representation $\Ind^{HP_0}_{P_0}(W')^{\frd,g}$ is topologically irreducible.

(v) The topological  $H$-representations $\Ind^{HP_0}_{P_0}(W')^{\frd, g_1}$ and $\Ind^{HP_0}_{P_0}(W')^{\frd, g_2}$ are isomorphic if and only if $g_1 HP_0 = g_2 HP_0$.
\end{thm}

\noindent \Pf (i) The proofs of \ref{basiciso} and \ref{Prop_admissible} only make use of the fact that $G_0$
is open (hence its Lie algebra is equal to $\frg$) and that it contains $P_0$, and the corresponding statements carry over for any group with these properties. This shows statement (i) for the case when $g = 1$. From this the general case follows immediately.

(iv) This follows from (ii), by \ref{equivalence}.

(v) This follows from (iii), by \ref{equivalence}.

We will now start with the proofs of (ii) and (iii). The first step is to reduce to the case of a suitable subgroup $H_0 \sub H$ which is normal in $G_0$ and uniform pro-$p$. Having this accomplished, we rename $H_0$ to $H$. The second step is to pass to modules over the Banach algebra $D_r(H)$, cf. \cite[sec. 4]{ST2}. The third step consists in studying the restriction of these modules to the subring $U_r(\frg) = U_r(\frg,H)$, which is the completion of $U(\frg)$ with respect to the norm $\|\cdot\|_r$ on $D_r(H)$.

{\it Step 1: reduction to $H_0$.} The standard parabolic subgroup $\bP \sub \bG$ has a smooth model $\bP_0 \sub \bG_0$, and the unipotent radical $\bU_\bP^-$ of the parabolic subgroup opposite to $\bP$ has a smooth model $\bU_{\bP,0} \sub \bG_0$ as well.

Let $\Lie(\bG_0)$, $\Lie(\bP_0)$ and $\Lie(\bU_{\bP,0})$ be the Lie algebras of these (smooth) group schemes. These are $O_L$-lattices in $\frg = \Lie(G)$, $\frp = \Lie(P)$ and $\fru_P^- = \Lie(U_P^-)$, respectively. Moreover, $\Lie(\bG_0)$, $\Lie(\bP_0)$ and $\Lie(\bU_{\bP,0})$ are $\Zp$-Lie algebras, and we have

$$\Lie(\bG_0) = \Lie(\bU^-_{\bP,0}) \oplus \Lie(\bP_0) \;.$$

\vskip8pt

\noindent For $m_0 \ge 1$ ($m_0 \ge 2$ if $p=2$) the $O_L$-lattices $p^{m_0} \Lie(\bG_0)$, $p^{m_0} \Lie(\bP_0)$ and $p^{m_0} \Lie(\bU_{\bP,0})$ are powerful $\Zp$-Lie algebras, cf. \cite[sec. 9.4]{DDMS}, and hence
$\exp_G: \frg \dashrightarrow G$ converges on these $O_L$-lattices. Therefore,

\begin{numequation}\label{H_0}
H_0 = \exp_G\Big(p^{m_0} \Lie(\bG_0)\Big) \;, \;\; H_0^+ = \exp_G\Big(p^{m_0} \Lie(\bP_0)\Big) \;, \;\; H_0^- = \exp_G\Big(p^{m_0} \Lie(\bU^-_{\bP,0})\Big)
\end{numequation}


\noindent are uniform pro-$p$ groups. The adjoint action of $G_0 = \bG_0(O_L)$ leaves $\Lie(\bG_0)$ invariant, and hence $p^{m_0} \Lie(\bG_0)$. This shows that $H_0$ is normal in $G_0$, and analogous arguments show that $H_0^+$ and $H_0^-$ are normal in $P_0$ and $U_{P,0}^-$, respectively. It follows from the existence of 'coordinates of the second kind' that $H_0 = H_0^-H_0^+$ and $H_0 \cap P_0 = H_0^+$ and $H_0 \cap U^-_{P,0} = H_0^-$, cf. \cite[2.2.4 (ii)]{OS}.

Moreover, for large enough $m_0$ the group $H_0$ is contained $H$, and $D(H) = \bigoplus_{h \in H/H_0} \delta_h D(H_0)$. This implies that, as $D(H_0)$-modules,

\begin{numequation}\label{reduction1}
\delta_g \star \left(D(HP_0) \otimes_{D(\frg,P_0)} M\right) = \bigoplus_{h \in HP_0/H_0P_0} \delta_{gh} \star \left(D(H_0P_0) \otimes_{D(\frg,P_0)} M\right) \;.
\end{numequation}


\noindent Let us now assume that statements (ii) and (iii) hold for $H_0$. Then the direct sum on the right of \ref{reduction1} consists of simple $D(H_0)$-modules which are pairwise non-isomorphic. As they are permuted by the action of $H$, it follows that the left side is simple as $D(H)$-module.
Furthermore, if $\delta_{g_1} \star \left(D(HP_0) \otimes_{D(\frg,P_0)} M\right)$ is isomorphic to $\delta_{g_2} \star \left(D(HP_0) \otimes_{D(\frg,P_0)} M\right)$ as $D(H)$-module, then also as $D(H_0)$-module. But then we must have $g_1 h_1 H_0P_0 = g_2 h_2 H_0 P_0$ for some $h_1, h_2 \in H$, in particular $g_1 H P_0 = g_2 H P_0$.

Thus we have shown that statements (ii) and (iii) for $D(H)$ follow from the corresponding statements for $D(H_0)$.

To simplify notation, we will from now on assume that $H$, and its subgroups $H^+$ and $H^-$, are defined as in \ref{H_0}, and are in particular uniform pro-$p$ groups.

{\it Step 2: passage to $D_r(H)$.} We put

\vspace{-0.3cm}
$$\kappa = \left\{\begin{array}{lcl} 1 & , & p>2 \\
2 & , & p=2 \end{array} \right.$$


\noindent (We point out that in \cite{OS}, which we are going to use in the following, $\kappa$ is denoted by $\vep_p$.) Let $r$ always denote a real number in $(0,1) \cap p^\bbQ$ with the property:
\begin{numequation}\label{r_and_s}
\hskip-20pt \mbox{there is $m \in \bbZ_{\ge 0}$ such that $s = r^{p^m}$ satisfies $\frac{1}{p} < s$ and $s^{\kappa} < p^{-1/(p-1)}$ .}
\end{numequation}

\vspace{-0.3cm}
\noindent For the definition of the canonical $p$-valuation on a uniform pro-$p$ group we refer to \cite[2.2.3]{OS}.
We let $\|\cdot\|_r$ denote the norm on $D(H)$ associated to $r$ and the canonical $p$-valuation, cf. \cite[2.2.6]{OS} (where this norm is denoted by $\bar{q}_r$). $D_r(H)$ denotes the corresponding Banach space completion. This is a noetherian Banach algebra, and $D(H) = \varprojlim_{r <1} D_r(H)$ (cf. \cite{ST2} for more information). If $\widetilde{H} \sub G_0$ is a compact open subgroup which contains $H$, then
\begin{numequation}\label{dcomp}
D(\widetilde{H}) = \bigoplus_{g \in \widetilde{H}/H} \delta_g D(H)
\end{numequation}


\noindent and we let $\|\cdot\|_r$ be the maximum norm on $D(\widetilde{H})$ given by

\begin{numequation}\label{maxnorm}
\|\sum_{g \in \widetilde{H}/H} \delta_g \lambda_g \|_r = \max\{\|\lambda_g \|_r \midc g \in \widetilde{H}/H \} \;.
\end{numequation}


\noindent Then the completion $D_r(\widetilde{H})$ of $D(\widetilde{H})$ with respect to $\|\cdot\|_r$ has an analogous decomposition:

$$D_r(\widetilde{H}) = \bigoplus_{g \in \widetilde{H}/H} \delta_g D_r(H)$$


\noindent and therefore $D_r(\widetilde{H}) =  D_r(H) \otimes_{D(H)} D(\widetilde{H})$. We are going to use the preceding discussion in the case when $\widetilde{H} = HP_0$. Put $\bM = D(HP_0) \otimes_{D(\frg,P_0)} M$. To show that $\bM$ is a simple $D(H)$-module it suffices, by \cite[3.9]{ST2}, to show that

\vspace{-0.3cm}
$$\bM_r := D_r(H) \otimes_{D(H)} \bM = D_r(HP_0) \otimes_{D(\frg,P_0)} M$$


\noindent is a simple $D_r(H)$-module for a sequence of $r$'s converging to $1$. By \ref{basiciso} and \ref{nonzero}

\vspace{-0.3cm}
$$\cF^{G_0}_{P_0}(M)' = D(G_0) \otimes_{D(HP_0)} \bM$$


\noindent is non-zero, and thus $\bM = \varprojlim_{r<1} \bM_r$ is non-zero. Hence we must have $\bM_r \neq 0$ for $r$ sufficiently close to $1$. As the image of $M$ in $\bM_r$ generates $\bM_r$ as $D_r(HP_0)$-module, and because $M$ is simple, the canonical map $M \ra \bM_r$ must be injective when $\bM_r \neq 0$. From now on we assume that, in addition to \ref{r_and_s}, $r$ is such that $\bM_r \neq 0$, and we consider $M$ as being contained in $\bM_r$.

{\it Step 3: passage to $U_r(\frg)$.} Let $U_r(\frg) = U_r(\frg,H)$ be the topological closure of $U(\frg)$ in $D_r(H)$. It is important for our approach to have a useful description of $U_r(\frg)$. We will freely use the following fact, which follows from \cite[5.6]{Sch}:
\vskip-12pt

\begin{numequation}\label{density1}
\hskip-50pt \mbox{$U(\frg)$ is dense in $D_r(H)$ if $r^{\kappa} < p^{-\frac{1}{p-1}}$. In this case $U_r(\frg) = D_r(H)$.}
\end{numequation}

\vskip-12pt

\noindent Let $(P_m(H))_{m \ge 1}$ be the lower $p$-series of $H$, cf. \cite[1.15]{DDMS}. Note that $P_1(H) = H$. For $m \ge 0$ put $H^m : = P_{m+1}(H)$ so that $H^0 = H$. We refer to \cite[sec. 4.5, sec. 9.4]{DDMS} for the notion of a $\Zp$-Lie algebra of a uniform pro-$p$ group. It follows from \cite[3.6]{DDMS} that $H^m$ is a uniform pro-$p$ group with $\Zp$-Lie algebra equal to $p^m\Lie_\Zp(H)$. Let $s = r^{p^m}$ be as in \ref{r_and_s}. Denote by $\|\cdot\|_s^{(m)}$ the norm on $D(H^m)$ associated to $s$ and the canonical $p$-valuation on $H^m$. Then, by \cite[6.2, 6.4]{Sch}, the restriction of $\|\cdot\|_r$ on $D(H)$ to $D(H^m)$ is equivalent to $\|\cdot\|_s^{(m)}$, and $D_r(H)$ is a finite and free $D_s(H^m)$-module a basis of which is given by any set of coset representatives for $H/H^m$. By \ref{density1} we can conclude \vskip-12pt

\begin{numequation}\label{density2}
\mbox{If $s = r^{p^m}$ is as in \ref{r_and_s}, then $U(\frg)$ is dense in $D_s(H^m)$, hence $U_r(\frg,H) = D_s(H^m)$.}
\end{numequation}

\vskip-12pt

\noindent In particular, $U_r(\frg) \cap H = H^m$ is an open subgroup of $H$. Let

\vspace{-0.3cm}
$$\frm_r := U_r(\frg)M$$


\noindent be the $U_r(\frg)$-submodule of $\bM_r$ generated by $M$. Because we assume $\bM_r \neq 0$ we also have $\frm_r \neq 0$, and $M$ is contained in $\frm_r$.

Likewise, we equip $D(H^+)$ ($D(H^-)$, resp.) with the norm $\|\cdot\|_r$ associated to the canonical $p$-valuation on $H^+$ ($H^-$, resp.), and give $D(P_0)$ ($D(U^-_{P,0})$, resp.) the maximum norm as above (cf. \ref{dcomp} and \ref{maxnorm}). Again, we denote by $D_r(P_0)$ ($D_r(U^-_{P,0})$, resp.) the corresponding completions. As is easy to see, these norms on $D(P_0)$ ($D(U^-_{P,0})$, resp.) are equal to the restriction of the norms on $D(HP_0)$ ($D(U^-_{P,0}H)$, resp.) coming from the norm $\|\cdot\|_r$ on $D(H)$ by the recipe explained above.

Let $U_r(\frp)$ be the closure of $U(\frp)$ in $D_r(H^+)$. Then $U_r(\frp)$ is the completion of $U(\frp)$ with respect to the norm associated to the canonical $p$-valuation on $H^+$. Because the canonical $p$-valuation of an element of $h \in H$ ($h \in H^+$, resp.) can be read off from $\log_G(h) \in p^{m_0}\Lie(\bG_0)$ ($\log_G(h) \in p^{m_0}\Lie(\bP_0)$, resp.), the canonical $p$-valuation on $H^+$ is the restriction of the canonical $p$-valuation on $H$. Hence $U_r(\frp)$ is also the closure of $U(\frp)$ in $U_r(\frg)$.

It follows from \ref{density2} (applied to $H^+$ and $\frp = \Lie_\Zp(H^+)$) that $D_r(P_0)$ is generated as a module over $U_r(\frp)$ by finitely many Dirac distributions $\delta_{g_1}, \ldots, \delta_{g_k}$, with $g_i \in P_0$. Because $P_0$ acts on $M$, the $U_r(\frg)$-module $\frm_r$ is actually a module over the subring

\begin{numequation}\label{D_r_frg_P_0}
D_r(\frg,P_0) = U_r(\frg)D_r(P_0) = \sum_{i=1}^k U_r(\frg) \cdot \delta_{g_i}
\end{numequation}

\noindent generated by $U_r(\frg)$ and $D_r(P_0)$ inside $D_r(G_0)$. Moreover, $\frm_r$ is a finitely generated $U_r(\frg)$-module (in fact, generated by a single vector of highest weight), hence carries a canonical $U_r(\frg)$-module topology, as $U_r(\frg)$ is a noetherian Banach algebra (cf. \cite[1.4.2]{K1} which applies here because we assume $r \in (\frac{1}{p},1) \cap p^\bbQ$).

The module $M$ is clearly dense in $\frm_r$ with respect to this topology. It follows from \cite[1.3.12]{F}
or \cite[3.4.8]{OS} that $\frm_r$ is a simple $U_r(\frg)$-module, and in particular a simple $D_r(\frg,P_0)$-module.

\setcounter{enumi}{0}

\begin{sublemma}\label{P_{0,r}} Let $r$ and $s$ be as in \ref{r_and_s}. Put $P_{0,r} = HP_0 \cap D_r(\frg,P_0)$. Then

(i) $P_{0,r} = H^mP_0 = H^{-,m}P_0$, where $H^{-,m} = P_m(H^-)$.

(ii) $D_r(HP_0) =  \bigoplus_{g \in HP_0/P_{0,r}} \delta_g D_r(\frg,P_0)$.
\end{sublemma}

\noindent \Pf By \ref{density2} (and the line following) we have $U_r(\frg) \cap H = H^m$, and thus $P_{0,r} \supset H^mP_0$. 
On the other hand, we have

\vspace{-0.3cm}
\begin{numequation}\label{D_r_H_P_0_I}
D_r(HP_0) = \bigoplus_{g \in HP_0/H} \delta_g D_r(H)
\end{numequation}
\noindent and

\vspace{-0.3cm}
\begin{numequation}\label{decompU_r}
D_r(H) = \bigoplus_{h \in H/H^m} \delta_h U_r(\frg) \;.
\end{numequation}

\noindent \ref{D_r_H_P_0_I} and \ref{decompU_r} taken together give

\vspace{-0.3cm}
\begin{numequation}\label{D_r_H_P_0_II}
D_r(HP_0) = \bigoplus_{g \in HP_0/H^m} \delta_g U_r(\frg)  = \bigoplus_{h \in H^-/H^{-,m}} \bigoplus_{g \in P_0/H^{+,m}} \delta_h \delta_g U_r(\frg) \;.
\end{numequation}

\noindent \ref{D_r_H_P_0_II} and \ref{D_r_frg_P_0} taken together give

\vspace{-0.3cm}
\begin{numequation}\label{decompU_r2}
D_r(\frg,P_0) = \bigoplus_{g \in P_0/H^{+,m}} \delta_g U_r(\frg) \;.
\end{numequation}

\noindent If now $h \in H$ is such that $\delta_h \in D_r(\frg,P_0)$, then \ref{decompU_r} and \ref{decompU_r2} together show that $\delta_h = \delta_g \delta$ with $g \in P_0 \cap H$ and $\delta \in U_r(\frg)$. Therefore $\delta  = \delta_{g^{-1}h} \in H \cap U_r(\frg) = H^m$, and so $h \in H^mP_0$. This shows (i). Because the inclusion $H^- \sub H$ induces a bijection $H^-/H^{-,m} \ra (HP_0)/(H^mP_0)$, assertion (ii) now follows from (i) and \ref{D_r_H_P_0_II} and \ref{decompU_r2}. \qed

\vskip8pt

By \ref{P_{0,r}} (ii) we have  $\frm_r = D_r(\frg,P_0) \otimes_{D(\frg,P_0)} M \sub D_r(HP_0) \otimes_{D(\frg,P_0)} M = \bM_r$. Using \ref{P_{0,r}} (ii) we can conclude

\vspace{-0.3cm}
\begin{numequation}\label{decomp}
\bM_r = D_r(HP_0) \otimes_{D(\frg,P_0)} M = D_r(HP_0) \otimes_{D_r(\frg,P_0)} \frm_r = \bigoplus_{g \in HP_0/P_{0,r}} \delta_g \frm_r \,.
\end{numequation}

\noindent Note that, as $U_r(\frg)$-modules, the submodule $\delta_g \frm_r$ on the right hand side is the same as $\delta_g \star \frm_r$, where we use the notation as introduced before \ref{irredH} (for $U_r(\frg)$ instead of $D(H)$). By Theorem \ref{U_r_modules} below the modules $\delta_g \star \frm_r$ are all simple, and there is no isomorphism of $U_r(\frg)$-modules $\delta_{g_1} \star \frm_r \cong \delta_{g_2} \star \frm_r$ if $g_1P_{0,r} \neq g_2P_{0,r}$. This shows that $\bM_r$ is a simple $D_r(H)$-module.

Now assume that $\delta_{g_1} \star \bM$ and $\delta_{g_2} \star \bM$ are isomorphic as $D(H)$-modules. Then, after tensoring with $D_r(H)$ we find that $\delta_{g_1} \star \bM_r$ and $\delta_{g_2} \star \bM_r$ are isomorphic as $D_r(H)$-modules, hence as $U_r(\frg)$-modules. Then \ref{decomp}, together with \ref{U_r_modules}, implies that $g_1h_1 P_{0,r} = g_2 h_2 P_{0,r}$ for some $h_1, h_2 \in H$, and hence $g_1HP_0 = g_2HP_0$.

We have therefore reduced statements (ii) and (iii) of \ref{irredH} to the assertions of the theorem below. \qed

\setcounter{enumi}{0}

\begin{thm}\label{U_r_modules} Assume \ref{hyp} holds. Let $M \in \cO^\frp_\alg$ be as in \ref{irredH}. Let $H = \exp_G\left(p^{m_0}\Lie(\bG_0)\right)$ be uniform pro-$p$. Assume that $r \in (\frac{1}{p},1) \cap p^{\bbQ}$ and $s$ are as in \ref{r_and_s}. Assume that

\vspace{-0.3cm}
$$\frm_r = D_r(\frg,P_0) \otimes_{D(\frg,P_0)} M \neq 0 \;.$$


\noindent (This will be the case if $r$ is close enough to 1.) Then, for every $g \in G_0$ the $U_r(\frg)$-module $\delta_g \star \frm_r$ is simple. For any $g_1, g_2 \in G_0$ with $g_1P_{0,r} \neq g_2P_{0,r}$ the $U_r(\frg)$-modules $\delta_{g_1} \star \frm_r$ and $\delta_{g_2} \star \frm_r$ are not isomorphic.
\end{thm}

\noindent \Pf We continue to use the notation introduced in the proof of \ref{irredH}. We have already seen that $\frm_r$ is simple as $U_r(\frg)$-module, and this implies that $\delta_g \star \frm_r$ is simple as well.

It is trivial to check that for $x, y \in G_0$ one has $\delta_x \star (\delta_y \star \frm_r) = \delta_{xy} \star \frm_r$, and if $x \in P_{0,r}$, then $\delta_x \star \frm_r \ra \frm_r$, $n \mapsto \delta_x \cdot n$, is an isomorphism of $U_r(\frg)$-modules.

Now let $\phi: \delta_{g_1} \star \frm_r \stackrel{\simeq}{\lra} \delta_{g_2} \star \frm_r$ be an isomorphism of $U_r(\frg)$-modules. A straightforward calculation shows that $\phi$ is also a $U_r(\frg)$-module isomorphism $\delta_{g_2^{-1}g_1} \star \frm_r \stackrel{\simeq}{\lra} \frm_r$, so that we may assume $g_2 = 1$. Let $I_0 \sub G_0$ be the Iwahori subgroup whose image in $\bG(O_L/(\pi_L))$ is $\bB(O_L/(\pi_L))$.\footnote{We denote the Iwahori subgroup by $I_0$ because we use $I$ for the subset of simple roots corresponding to $\frp$.} Using the Bruhat decomposition

\vspace{-0.3cm}
$$G_0 = \coprod_{w \in W/W_P} I_0wP_0$$


\noindent we may write $g = g_1= h^{-1}wp_1$ with $h \in I_0$, $w \in W$ and $p_1 \in P_0$.
By the Iwahori decomposition

\vspace{-0.3cm}
$$I_0 = (I_0 \cap {U_{P,0}^-}) \cdot (I_0 \cap P_0)$$


\noindent we can write $h = up_2$ with $p_2 \in I_0 \cap P_0$ and $u \in I_0 \cap U_{P,0}^-$. The same reasoning as before then shows that an isomorphism $\delta_g \star \frm_r \stackrel{\simeq}{\lra} \frm_r$ induces an isomorphism of $U_r(\frg)$-modules which we again denote by $\phi$:

\vspace{-0.3cm}
$$\phi: \delta_w \star \frm_r \stackrel{\simeq}{\lra} \delta_u \star \frm_r \,.$$


\noindent {\it Step 1.} We show first that this can only happen if $w \in W_P$.  Let $\lambda \in X({\bf T})^\ast$ be the highest weight of $M$, i.e. $M = L(\lambda)$, and $I = \{\alpha \in \Delta \midc \langle \lambda, \alpha^\vee \rangle \in \bbZ_{\ge 0} \}$, cf. Example \ref{Example_Verma}.

\vskip8pt

Then by Cor. \ref{locfinitesubalgebra} and the maximality condition with respect to $\frp$, the parabolic subalgebra $\frp$ is $\frp_I$, where the root system of the Levi subalgebra of $\frp_I$ has $I$ as a basis of simple roots. Suppose $w$ is not contained in $W_I = W_P$. Then there is a positive root  $\beta \in \Phi^+\setminus\Phi^+_I$ such that $w^{-1}\beta < 0$, hence $w^{-1}(-\beta) > 0$, cf. \cite[2.3]{Car}. Consider a non-zero element element $y \in \frg_{-\beta}$, and let $v^+ \in M$ be a weight vector of weight $\lambda$ (uniquely determined up to a non-zero scalar). Then we have the following identity in $\delta_w \star \frm_r$,

\vspace{-0.3cm}
$$y \cdot_w v^+ = \Ad(w^{-1})(y) \cdot v^+ = 0$$


\noindent as $\Ad(w^{-1})(y) \in \frg_{-w^{-1}\beta}$ annihilates $v^+$. We have $\phi(v^+) = v$ for some nonzero $v \in \frm_r$. And therefore

\vspace{-0.3cm}
$$0 = \phi(y \cdot_w v^+) = y \cdot_u \phi(v^+) = y \cdot_u v = \Ad(u^{-1})(y) \cdot v \;.$$


\noindent On the other hand, $y' := \Ad(u^{-1})(y)$ is an element of $\fru_P^-$ and as such acts injectively on $M$, cf. Cor. \ref{injective}. We show that $y'$ actually acts injectively on $\frm_r$, too. Indeed, let $v\in \frm_r$ with $y'\cdot v=0.$ Write $v$ as a convergent sum $v = \sum_{\mu \in \Lambda(v)} v_{\lambda-\mu}$ where $\Lambda(v)$ is a (non-empty) subset of $\bbZ_{\ge 0} \alpha_1 \oplus \ldots \oplus \bbZ_{\ge 0} \alpha_\ell$ and $v_{\lambda-\mu} \in M_{\lambda-\mu} \setminus \{0\}$ is a vector of weight $\lambda - \mu$ (cf. the fourth assertion in \cite[1.3.12]{F}). Here $\Delta = \{\alpha_1, \ldots, \alpha_\ell\}$ is the set of simple roots. Write $y' = \sum_{\gamma \in B} y_\gamma$, where $B$ is a non-empty subset of $\Phi^+\setminus \Phi^+_I$ and $y_\gamma$ is a non-zero element of $\frg_{-\gamma} \subset \fru_P^-$. Then we have

\vspace{-0.3cm}
$$0 = y' \cdot v = \sum_{\nu \in \bbZ_{\ge 0} \alpha_1 \oplus \ldots \oplus \bbZ_{\ge 0} \alpha_\ell} \left(\sum_{\mu \in \Lambda(v), \gamma \in B, \nu = \mu+\gamma} y_\gamma \cdot v_{\lambda-\mu}\right) \;,$$


\noindent where

$$\sum_{\mu \in \Lambda(v), \gamma \in B, \nu = \mu+\gamma} y_\gamma \cdot v_{\lambda-\mu}$$


\noindent lies in $M_{\lambda-\nu}$. Because of the uniqueness of the expansions of elements of $\frm_r$ as convergent series of weight vectors, cf. \cite[1.3.12]{F}, we can conclude that $\sum_{\mu \in \Lambda(v), \gamma \in B, \nu = \mu+\gamma} y_\gamma \cdot v_{\lambda-\mu}$ vanishes.

Define on $\bbQ \alpha_1 \oplus \ldots \oplus \bbQ \alpha_\ell$ the lexicographic ordering as in the proof of Prop. \ref{notlocnilp}. Choose $\gamma^+ \in B$ and $\mu^+ \in \Lambda(v)$, both minimal with respect to this ordering. With $\nu^+ = \gamma^+ + \mu^+$ we then have

\vspace{-0.3cm}
$$0 = \sum_{\mu \in \Lambda(v), \gamma \in B, \nu^+ = \mu+\gamma} y_\gamma \cdot v_{\lambda-\mu} = y_{\gamma^+} \cdot v_{\lambda-\mu^+} \;.$$

\noindent This contradicts Cor. \ref{injective}, and we can thus conclude that $w$ must be an element of $W_P$.

\medskip
From now on we may even assume that $w=1$ since $W_P \subset P_0 \sub P_{0,r}$.

\vskip8pt

\noindent {\it Step 2.} Now let $u \in U^-_{P,0}$ and let $\phi: \frm_r \stackrel{\simeq}{\lra} \delta_u \star \frm_r$ be an isomorphism of $U_r(\frg)$-modules. We suppose that $u$ is not contained in $P_{0,r}$ and are seeking to derive a contradiction. Let $v^+ \in M_\lambda \setminus \{0\}$ be a vector of highest weight as above. Put $\phi(v^+) = v$ with some non-zero $v \in \frm_r$. Then we have for any $x \in \frt$, the following identity in $\delta_u \star \frm_r$:

\vspace{-0.3cm}
$$\lambda(x)v = \phi(x \cdot v^+) = x \cdot_u \phi(v^+) = \Ad(u^{-1})(x) \cdot v \;.$$


\noindent We have thus for all $x \in \frt$, the following identity in $\frm_r$

\vspace{-0.3cm}
\begin{numequation}\label{identity}
\lambda(x)v = \Ad(u^{-1})(x) \cdot v \hskip5pt .
\end{numequation}

\vspace{-0.3cm}
\noindent Note that this equation only involves the action of elements of $\fru_P^- \oplus \frt$, because $\Ad(u^{-1})(x)$ is in $\fru_P^- \oplus \frt$. Next we embed $\frm_r$ into the ''formal completion'' of $M$, i.e.,

\vspace{-0.3cm}
$$\hat{M} = \prod_{\mu} M_\mu$$


\noindent by mapping the weight spaces $M_\mu \sub \frm_r$ to the corresponding space $M_\mu \sub \hat{M}$ (cf. the fourth assertion in \cite[1.3.12]{F}).
Then $\hat{M}$ is in an obvious way a module for $U(\fru_P^- \oplus \frt)$. This module structure extends to a representation of $U^-_P $ as follows. Since $U_P^-$ is a unipotent group, the exponential $\exp_{U_P^-}: \fru_P^- \dashrightarrow U_P^-$ is actually defined on the whole Lie algebra, so that we have a bijective map $\exp_{U_P^-}: \fru_P^- \ra U_P^-$. Let $\log_{U_P^-}: U_P^- \ra \fru_P^-$ be the inverse map. Then we can define for $u \in U_P^-$ and $v = \sum_\mu v_\mu \in \hat{M}$:

$$\delta_u \cdot v = \sum_\mu \sum_{n \ge 0} \frac{1}{n!} \log_{U_P^-}(u)^n \cdot v_\mu \,.$$


\noindent Note that this sum is well-defined in $\hat{M}$, because $\log_{U_P^-}(u)$ is in $\fru_P^-$, and there are hence only finitely many terms of given weight in this sum. (We continue to write the action of an element $u \in U_P^-$ on $\hat{M}$ by $\delta_u$.) Moreover, it gives an action of $U_P^-$ on $\hat{M}$ because

$$\begin{array}{rcl}
\exp(\log_{U_P^-}(u_1)) \cdot \exp(\log_{U_P^-}(u_2)) &
= & \exp\big(\cH(\log_{U_P^-}(u_1),\log_{U_P^-}(u_2))\big) \\
\\
& = & \exp(\log_{U_P^-}(u_1u_2)) \,,
\end{array}$$

\vskip5pt

\noindent where $\cH(X,Y) = \log(\exp(X)\exp(Y))$ is the Baker-Campbell-Hausdorff series (which converges on $\fru_P^-$, as $\fru_P^-$ is nilpotent). It is then immediate that this action is compatible with the action of $U(\fru_P^- \oplus \frt)$. The identity \ref{identity} implies then the following identity in $\hat{M}$

\vspace{-0.3cm}
$$\delta_{u^{-1}} \cdot( x \cdot (\delta_u \cdot v)) = \Ad(u^{-1})(x) \cdot v = \lambda(x)v \hskip5pt ,$$


\noindent for all $x \in \frt$, and thus, multiplying both sides of the previous equation with $\delta_u$,

\vspace{-0.3cm}
$$x \cdot (\delta_u \cdot v) = \lambda(x) \delta_u \cdot v \hskip5pt .$$

\noindent Hence $\delta_u \cdot v \in \hat{M}$ is a weight vector of weight $\lambda$ and must therefore be equal to a non-zero scalar multiple of $v^+$. After scaling $v$ appropriately we have $\delta_u \cdot v = v^+$ or
$v = \delta_{u^{-1}} \cdot v^+$ with

\begin{numequation}\label{series}
\delta_{u^{-1}} \cdot v^+ = \sum_{n \ge 0} \frac{1}{n!} (-\log_{U_P^-}(u))^n \cdot v^+ =: \Sigma \hskip5pt .
\end{numequation}

\noindent Our goal is to show that the series $\Sigma$, which is an element of $\hat{M}$, does in fact not lie in the image of $\frm_r$ in $\hat{M}$, if $u \notin P_{0,r}$, thus achieving a contradiction.

{\it Step 3.} Let $\frg_\bbZ$ be a $\bbZ$-form of $\frg$, i.e. $\frg_\bbZ \otimes_\bbZ L = \frg$. We fix a Chevalley basis $(x_\gamma,y_\gamma,h_\alpha \midc \gamma \in \Phi^+, \alpha \in \Delta)$ of $[\frg_\bbZ,\frg_\bbZ]$. We have $x_\gamma \in \frg_\gamma$, $y_\gamma \in \frg_{-\gamma}$, and $h_\alpha = [x_\alpha,y_\alpha] \in \frt$, for $\alpha \in \Delta$. Then

$$\Lie_\Zp(H^-) = p^{m_0}\Lie(\bU^-_{\bP,0}) = \bigoplus_{\beta \in \Phi^+ \setminus \Phi^+_I} O_L y_\beta^{(0)} \;,$$


\noindent where $y_\beta^{(0)} = p^{m_0}y_\beta$. Moreover, the $\Zp$-Lie algebra of $H^{-,m}$ is

\vspace{-0.3cm}
$$\Lie_\Zp(H^{-,m}) = p^m\Lie(H^-) = \bigoplus_{\beta \in \Phi^+ \setminus \Phi^+_I} O_L y_\beta^{(m)} \,,$$


\noindent where $y_\beta^{(m)} = p^{m_0+m}y_\beta$.
We assume that $r$ and $s$ are as in \ref{r_and_s}. Then \ref{density2} shows that $U_r(\fru_P^-) = U_r(\fru_P^-,H^-) = D_s(H^{-,m})$. Elements in $U_r(\fru_P^-)$ thus have a description as power series in $(y^{(m)}_\beta)_{\beta \in \Phi^+ \setminus \Phi^+_I}$:

\vspace{-0.3cm}
$$U_r(\fru_P^-) = \left\{\sum_{n = (n_\beta)} d_n  (y^{(m)})^n \midc \lim_{|n| \ra \infty} |d_n|s^{\kappa|n|} = 0 \right\} \,,$$


\noindent where $(y^{(m)})^n$ is the product of the $(y^{(m)}_\beta)^{n_\beta}$ over all $\beta \in \Phi^+ \setminus \Phi^+_I$, taken in some fixed order.
Let $\|\cdot\|_s^{(m)}$ be the norm on $D_s(H^{-,m})$ induced by the canonical $p$-valuation on $H^{-,m}$. Then we have for any generator $y^{(m)}_\beta$

\vspace{-0.3cm}
\begin{numequation}\label{norm_generator}
\| y^{(m)}_\beta\|_s^{(m)} = s^\kappa \;,
\end{numequation}

\noindent as follows from \cite[5.3, 5.8]{Sch}. We recall that by \ref{P_{0,r}} we have $P_{0,r} = H^{-,m}P_0$. Write

$$\log_{U_P^-}(u) = \sum_{\beta \in B(u)} z_\beta \;,$$

\noindent with a non-empty set $B(u) \sub \Phi^+ \setminus \Phi^+_I$ and non-zero elements $z_\beta \in \frg_{-\beta}$. Put

$$B^+(u) = \{\beta \in B(u) \midc z_\beta \notin O_L y^{(m)}_\beta \} \;.$$

\noindent This is a non-empty subset of $B(u)$ since $u \notin P_{0,r}$. Put $B'(u) = B(u) \setminus B^+(u)$,

$$z^+ = \sum_{\beta \in B^+(u)} z_\beta \hskip10pt \mbox{ and } \hskip10pt z' = \sum_{\beta \in B'(u)} z_\beta = \log_{U_P^-}(u) - z^+ \;.$$

\noindent Then $z' \in \Lie_\Zp(H^{-,m})$, and thus $\exp(z') \in H^{-,m}$. The element $u_1 = u\exp(z')^{-1} = u\exp(-z')$ does not lie in $H^{-,m}$, and $\delta_u \star \frm_r \cong \delta_{u_1} \star \frm_r$. We may hence replace $u$ by $u_1 = u \exp(-z')$. Then we compute $\log_{U^-}(u_1) = \log_{U^-}(u \exp(-z'))$ by means of the Baker-Campbell-Hausdorff formula. Because of the commutators appearing in this formula we have

\begin{numequation}\label{log}
\log_{U^-}(u_1) \in z^+ + \sum \left( \, \mbox{iterated commutators of } \frg_{-B^+(u)} \mbox{ with } \frg_{-B'(u)} \, \right) \hskip5pt ,
\end{numequation}

\noindent where $\frg_{-B^+(u)} = \bigoplus_{\beta \in B^+(u)} \frg_{-\beta}$ and analogous for $\frg_{-B'(u)}$.
Recall that the {\it height} $ht(\beta)$ of the root $\beta = \sum_{\alpha \in \Delta}n_\alpha \alpha$ is defined to be the sum $\sum_{\alpha \in \Delta}n_{\alpha}$.
Put $ht'(u) = \min\{ht(\beta) \midc \beta \in B'(u)\}$.  It follows from the preceding formula \ref{log} that
if the right summand does not vanish we have  $ht'(u_1) > ht'(u)$.

The process of passing from $u$ to $u_1$ can be repeated finitely many times, but will finally produce an element $u_N \in U^-_{P,0} \setminus H^{-,m}$ which has the property that $B'(u_N) = \emptyset$ (and hence $u_{N+1} = u_N$). We will denote this element again by $u$.

\vskip8pt

Next we chose an extremal element $\beta^+$ among the $\beta \in B(u) = B^+(u)$, i.e. one of the minimal generators of the cone $\sum_{\beta \in B(u)} \bbR_{\ge 0}\beta$
inside $\bigoplus_{\alpha \in \Delta} \bbR \alpha$. Then we have

$$\bbR_{>0}\beta^+ \cap \sum_{\beta \in B(u), \beta \neq \beta^+} \bbR_{\ge 0}\beta \hskip5pt = \hskip5pt \emptyset \,.$$

\noindent This means in particular that no positive multiple of $\beta^+$ can be written as a linear combination \linebreak $\sum_{\beta \in B(u), \beta \neq \beta^+} c_\beta \beta$ with non-negative integers $c_\beta \in \bbZ_{\ge 0}$. It follows that if $n$ is a positive integer and

\begin{numequation}\label{implication}
\begin{array}{l}
n\beta^+ = \gamma_1 + \ldots + \gamma_m \hskip5pt \mbox{ with not necessarily distinct } \hskip5pt \gamma_i \in B(u) \\
\\
\Rightarrow \hskip8pt \left[m=n \hskip5pt \mbox{ and } \hskip5pt \gamma_1 = \ldots = \gamma_n = \beta^+ \right] \hskip3pt .
\end{array}
\end{numequation}

\vskip8pt

After these intermediate considerations we return to the series

$$\Sigma = \sum_{n \ge 0} \frac{1}{n!} (-\log_{U_P^-}(u))^n \cdot v^+ = \sum_{n \ge 0} \frac{(-1)^n}{n!} (z_{\beta^+} + z_{\beta_2} + \ldots + z_{\beta_k})^n \cdot v^+$$

\noindent where $B(u) = \{\beta^+, \beta_2, \ldots, \beta_k \}$. It follows from \ref{implication} and Cor. \ref{injective}  that if we write $\Sigma$ as a (formal) sum of weight vectors, there is for any $n \in \bbZ_{\ge 0}$ a unique non-zero weight vector in this series which is of weight $\lambda - n \beta^+$, and this element is $\frac{(-1)^n}{n!}z_{\beta^+}^n \cdot v^+$.

\vskip8pt

{\it Step 4.} The last part of the proof is to show that the formal sum of weight vectors

\vspace{-0.3cm}
\begin{numequation}\label{expseries}
\sum_{n \ge 0} \frac{(-1)^n}{n!}z_{\beta^+}^n \cdot v^+
\end{numequation}

\vspace{-0.3cm}
\noindent cannot be the corresponding sum of weight components of an element of $\frm_r$, when considered as an element of $\hat{M}$ and written as a sum of weight vectors.

Write $\Phi^+ = \{\beta_1, \ldots, \beta_t\}$ such that $\Phi_I^+ = \{\beta_{\tau+1}, \ldots, \beta_t\}$. (Note: the roots $\beta_i$ are not necessarily the same as in Step 3.) Choose finitely many weight vectors $v_j \in M$ of weight $\lambda - \mu_j$, $j=1,\ldots,k$, generating $M$ as $U(\fru^-_P)$-module. We can and will assume that $v_j$ is of the form $\left(\prod_{\eta=\tau+1}^t y_{\beta_k}^{\nu_{j,\eta}}\right) \cdot v^+$.

Put $y_i = y_{\beta_i}$ and $y^{(m)}_i = y^{(m)}_{\beta_i}$ for $i=1, \ldots, \tau$. To show that \ref{expseries} does not converge to an element in $\frm_r$, we write

\vspace{-0.3cm}
\begin{numequation}\label{basicrelation}
\frac{y_{\beta^+}^n}{n!}.v^+ = \sum_{1 \le j \le k} \sum_{\nu \in \cI_{n,j}} c_{\nu,j}^{(n)} y_1^{\nu_1} \cdot \ldots \cdot y_\tau^{\nu_\tau}.v_j \,,
\end{numequation}

\noindent where $\cI_{n,j} \sub \bbZ_{\ge 0}^\tau$ consists of those $\nu = (\nu_1,\ldots,\nu_\tau)$ such that $\mu_j + \nu_1\beta_1 + \ldots + \nu_\tau\beta_\tau = n\beta^+$. It follows from the Prop. \ref{both_conditions}
that there is at least one index $(\nu,j) = (\nu_1, \ldots, \nu_\tau, j)$ with the property that

\begin{numequation}\label{estimate_c}
|c_{\nu,j}^{(n)}|_K \ge \left|\frac{1}{n!}\right|_K  \hskip10pt \mbox{and} \hskip10pt \nu_1 + \ldots + \nu_\tau + \sum_{k=\tau+1}^t \nu_{j,k} \ge n \,.
\end{numequation}

\noindent We will use this inequality to estimate the $||\cdot||_r$ - norm of any lift of $\frac{1}{n!}z_{\beta^+}^n.v^+$ to

$$\bigoplus_{1 \le j \le k} U_r(\fru_P^-) \otimes_K Kv_j\;.$$

\noindent Here the $\|\cdot\|_r$ - norm on this free $U_r(\fru_P^-)$-module is the supremum norm of the $||\cdot||_r$ - norms on each 
summand, defined by

\vspace{-0.3cm}
$$\|\delta \otimes v_j\|_r := \|\delta\|_r \;,$$


\noindent for $\delta \in U_r(\fru_P^-)$. Because $B'(u)=\emptyset$ (cf. Step 3), we have $z_{\beta^+} = p^{-a}y^{(m)}_{\beta^+}$ for some $a>0$. We then get from \ref{basicrelation}

\vspace{-0.3cm}
$$\frac{z_{\beta^+}^n}{n!}.v^+ =  p^{-na} \frac{(y^{(m)}_{\beta^+})^n}{n!} \cdot v^+ = \sum_{1 \le j \le k} \sum_{\nu \in \cI_{n,j}} c_{\nu,j}^{(n)} p^{-na+(m_0+m)(n- \nu_1-\ldots-\nu_\tau)} (y^{(m)}_1)^{\nu_1} \cdot \ldots \cdot (y^{(m)}_\tau)^{\nu_\tau}.v_j\,.$$

\vskip8pt

\noindent It follows from \cite[6.2, 6.4]{Sch} that the restriction of the norm $\|\cdot\|_r$ on $D(H,K)$ to $D(H^m,K)$ is equivalent to the norm $\|\cdot\|^{(m)}_s$ on $D(H^m,K)$ (cf. also \cite[7.4]{Sch}, where it is stated that the induced topologies are equivalent). Therefore, we are now going to estimate the $\|\cdot\|^{(m)}_s$ - norm of the term

$$c_{\nu,j}^{(n)} p^{-na+(m_0+m)(n- \nu_1-\ldots-\nu_r)} (y^{(m)}_1)^{\nu_1} \cdot \ldots \cdot (y^{(m)}_r)^{\nu_r} \;,$$

\vskip8pt

\noindent where $(\nu,j)$ is such that \ref{estimate_c} holds. By \ref{norm_generator}, the $\|\cdot\|^{(m)}_s$-norm of this term is greater or equal to

$$\begin{array}{cl}
& \left|\frac{1}{n!}\right|_K p^{na + (m_0+m)(\nu_1+\ldots+\nu_r-n)} s^{\nu_1+\ldots+\nu_r} \\
 & \\
= & \left|\frac{1}{n!}\right|_K p^{na- n + (m_0+m-1)(\nu_1+\ldots+\nu_r-n)} (ps)^{\nu_1+\ldots+\nu_r} \,.
\end{array}$$

\noindent Note that \ref{estimate_c} implies that $n - (\nu_1 + \ldots + \nu_r)$ is bounded from above by some constant $C_1$. Hence we get

\vspace{-0.3cm}
$$p^{na- n + (m_0+m-1)(\nu_1+\ldots+\nu_r-n)} \ge p^{n(a-1)} \cdot p^{-(m_0+m-1)C_1}$$

\noindent is bounded away from $0$ (recall that $a \in \bbZ_{>0}$). And because $s> \frac{1}{p}$ we have $ps >1$, and the term $(ps)^{\nu_1+\ldots+\nu_r}$ is unbounded as $n \ra \infty$ ($\nu_1 + \ldots + \nu_\tau \ge n - C_1$). Altogether we get that any lift of $\frac{1}{n!}z_{\beta^+}^n.v^+$ to
$\bigoplus_{1 \le j \le t} U_r(\fru^-) \otimes_K Kv_i$ has an $\|\cdot\|_r$ - norm which exceeds $C_2(ps)^n$, where $C_2>0$ is some constant (cf. the relation between the norms $\|\cdot\|_r$ and $\|\cdot\|^{(m)}_s$ mentioned above).

The sum $\sum_{n \ge 0} \frac{1}{n!}z_{\beta^+}^n.v^+$, which is an element of $\hat{M}$ is therefore not contained in the image of $\frm_r$ (under the injection $\frm_r \hra \hat{M}$). And as we have seen before, this in turn proves that $\sum_{n \ge 0} \frac{(-\log(u))^n}{n!}.v^+ = \delta_{u^{-1}}.v^+ \in \hat{M}$ is not in the image of $\frm_r$. \qed

\begin{thm}\label{irredgeneral} Assume that Assumption \ref{hyp} holds. Let $M \in \cO_\alg$ be simple and assume that $\frp$ is maximal 
for $M$ (cf. \ref{maximal}). Let $V$ be a smooth and irreducible $L_P$-representation. Then $\cF^G_P(M,V) = \Ind^G_P(W'\otimes_K V)^\frd$ is topologically irreducible as a $G$-representation.
\end{thm}

\noindent \Pf We start by giving an outline of the proof. Put $X = (\cF^G_P(M,V),\pi)$. Then, to any $G$-subrepresentation $U \subset X$ there 
is a smooth representation $U_{sm}$ and a natural $G$-morphism $\Ind^G_P(W')^\frd \otimes_K U_{sm} \ra U$. Here $U_{sm}$ is a subrepresentation of $X_{sm}$, and we will show that $X_{sm}$ is isomorphic to the smoothly induced representation $\ind^G_P(V)$. We will prove that if $U$ is closed and non-zero, 
then $U_{sm}$ is non-zero as well, and the composite map

\vspace{-0.3cm}
$$\Ind^G_P(W')^\frd \otimes_K U_{sm} \, \hra \, \Ind^G_P(W' )^\frd \otimes_K \ind^G_P(V) \,
\lra \, X$$


\noindent is surjective. As this map factors through $U$ we necessarily have $U=X$.

\vskip10pt

\noindent {\it Step 1: the smooth representation $U_{sm}$.}
For a $G$-subrepresentation $U \sub X$, we put

\vspace{-0.3cm}
$$U_{sm} = \varinjlim_H \Hom_H(\Ind_P^G(W')^\frd|_H, U|_H) \,,$$


\noindent where the limit is taken over all compact open subgroups $H$ of $G$, and the homomorphisms are continuous homomorphisms of 
topological vector spaces. Of course, it suffices to take the limit over a set of compact open subgroups which is a neighborhood basis 
of $1 \in G$. There is an action of $G$ defined on $U_{sm}$ as follows: for $\phi \in U_{sm}$,
$g \in G$ put

\vspace{-0.3cm}
$$(g\phi)(f) = \pi(g)(\phi(\pi'(g^{-1})(f))) \,,$$


\noindent where $\pi'$ is the representation on $\Ind^G_P(W')^\frd.$ Note that $U_{sm}$ is by construction a smooth representation. Taking for $U$ the whole space $X$ gives us a smooth $G$-representation 
$X_{sm}$.
Note that there is always a continuous map of $G$-representations

\vspace{-0.3cm}
$$\Phi_U: \Ind_P^G(W')^\frd \otimes_K U_{sm} \lra U \,,
\; f \otimes \phi \mapsto \phi(f) \,.$$


\noindent In order to analyze $U_{sm}$ we will need to understand the restriction of $\Ind_P^G(W')^\frd$ to compact-open subgroups $H$.

\noindent {\it Step 2: restricting $\Ind_P^G(W')^\frd$ to compact open subgroups.} Let $H \sub G_0$ be an open normal subgroup. We have

\vspace{-0.3cm}
\begin{numequation}\label{generaldecomp}
\Ind_P^G(W')^\frd|_H \; = \; \Ind_{P_0}^{G_0}(W')^\frd|_H \; = \; \bigoplus_{\gamma \in G_0/HP_0}
\Ind^H_{H \cap \gamma P_0 \gamma^{-1}}((W')^\gamma)^\frd \,.
\end{numequation}

\noindent The notation on the right hand side has the following meaning. The underlying $K$-vector spaces of $(W')^\gamma$ and $W'$ are the same. Further, if $(\rho',W')$ is the given representation of then $((\rho')^\gamma, (W')^\gamma)$ is the representation of $H \cap \gamma P_0 \gamma^{-1}$ defined by

\vspace{-0.3cm}
$$(\rho')^\gamma(h)= \rho'(\gamma^{-1}h\gamma) \;.$$


\noindent It is now straightforward to check that, as $H$-representations, with the notation introduced before \ref{irredH},

\vspace{-0.3cm}
$$\Ind^H_{H \cap \gamma P_0 \gamma^{-1}}((W')^\gamma)^\frd \; \simeq \; \delta_\gamma \star \Ind^{HP_0}_{P_0}(W')^\frd \;.$$


\noindent By \ref{irredH} (ii, iii), all $H$-representations on the right hand side of \ref{generaldecomp} are topologically irreducible, and between any two such representations $\Ind^H_{H \cap \gamma_1 P_0 \gamma_1^{-1}}(W)$ and $\Ind^H_{H \cap \gamma_2 P_0 \gamma_2^{-1}}(W)$ with $\gamma_1HP_0 \neq \gamma_2HP_0$, there is no non-zero continuous $H$-equivariant morphism. Similarly, for the representation $\Ind_P^G(W' \otimes_K V)^\frd$ we have

\vspace{-0.3cm}
$$\Ind_P^G(W' \otimes_K V)^\frd|_H \; = \; \Ind_{P_0}^{G_0}(W' \otimes_K V )^\frd|_H \; = \; \bigoplus_{\gamma \in G_0/HP_0}
\Ind^H_{H \cap \gamma P_0 \gamma^{-1}}((W')^\gamma \otimes_K V^\gamma)^\frd \,.$$


\noindent Denote by $V^{H \cap P_0} \sub V$ the subspace on which $H \cap P_0$ acts trivially. Then

\vspace{-0.3cm}
$$\Ind^H_{H \cap \gamma P_0 \gamma^{-1}}((W')^\gamma \otimes_K (V^{H \cap P_0})^\gamma)^\frd$$


\noindent is an $H$-subrepresentation of $\Ind^H_{H \cap \gamma P_0 \gamma^{-1}}((W')^\gamma \otimes_K V^\gamma)^\frd$. And

\vspace{-0.3cm}
$$\Ind^H_{H \cap \gamma P_0 \gamma^{-1}}((W')^\gamma \otimes_K (V^{H \cap P_0})^\gamma)^\frd$$


\noindent is canonically isomorphic to

\vspace{-0.3cm}
$$\left(\Ind^H_{H \cap \gamma P_0 \gamma^{-1}}((W')^\gamma)\right)^\frd \otimes_K V^{H \cap P_0} \,.$$


\noindent Let

\vspace{-0.3cm}
$$\phi: \Ind_{P_0}^{G_0}(W')^\frd|_H \ra \Ind_{P_0}^{G_0}(W' \otimes_K V )^\frd|_H$$


\noindent be a continuous homomorphism of $H$-representations. For each $\gamma,$ let $f_\gamma$ be a non-zero vector in

$$\Ind^H_{H \cap \gamma P_0 \gamma^{-1}}((W')^\gamma)^\frd \,.$$

\noindent Because of the irreducibility of the representations $\Ind^H_{H \cap \gamma P_0 \gamma^{-1}}((W')^\gamma)^\frd$, the map $\phi$ is uniquely determined by the images $\phi(f_\gamma) \in \Ind_{P_0}^{G_0}(W' \otimes_K V )^\frd$. By the very definition of the representation $\Ind_{P_0}^{G_0}(W' \otimes_K V )^\frd$, the function $\phi(f_\gamma): G_0 \ra W' \otimes_K V$ is rigid-analytic on the components of some covering of $G_0$ by 'polydiscs' $\Delta_i$, and takes values in a $BH$-subspace
of $W' \otimes_K V$. But any such $BH$-subspace is finite-dimensional, hence contained in a subspace of the form $W' \otimes_K V^{H' \cap P_0}$ for some (small enough) open subgroup $H' \sub H$ which is normal in $G_0$. Shrinking $H'$ we may further suppose that each $\Delta_i$ is $H'$-stable (by multiplication from the left), and that each $\phi(f_\gamma)$ takes values in $W' \otimes_K V^{H' \cap P_0}$. For any $h \in H$, we have

$$\phi(h.f_\gamma)(g) = [h.\phi(f_\gamma)](g) = \phi(f_\gamma)(h^{-1}g) \in W' \otimes_K V^{H'\cap P_0} \,.$$

\vskip8pt

\noindent Because the subspace of $\Ind^H_{H \cap \gamma P_0 \gamma^{-1}}((W')^\gamma)^\frd$ generated by the functions $h.f_\gamma$, $h \in H$, is dense in $ \Ind^H_{H \cap \gamma P_0 \gamma^{-1}}((W')^\gamma)^\frd$, we have $\phi(f)(g) \in W' \otimes_K V^{H' \cap P_0}$ for all $f \in  \Ind^H_{H \cap \gamma P_0 \gamma^{-1}}((W')^\gamma)^\frd$. It follows that $\phi$ induces a continuous map of $H'$-representations

$$\phi: \Ind_{P_0}^{G_0}(W')^\frd|_{H'} \ra \Ind_{P_0}^{G_0}(W' \otimes_K V^{H' \cap P_0} )^\frd|_{H'} = \left[\Ind^{G_0}_{P_0} (W')^\frd \right]|_{H'} \otimes_K V^{H' \cap P_0}$$

\vskip8pt

\noindent By \ref{irredH} (ii, iii), the irreducible $H'$-representation $\Ind^{H'}_{H' \cap \gamma P_0 \gamma^{-1}}((W')^\gamma)^\frd$ contained in the left hand side is mapped by $\phi$ to the
corresponding summand $\left[\Ind^{H'}_{H' \cap \gamma P_0 \gamma^{-1}}((W')^\gamma)^\frd\right] \otimes_K V^{H' \cap P_0}$ on the right hand side. The map $\phi$, restricted to this summand, is thus of the form $f \mapsto f \otimes v$ for some $v \in V^{H' \cap P_0}$.
This shows that any element in $X_{sm}$ is ''locally'' (i.e. on the subrepresentations $\Ind^{H'}_{H' \cap \gamma P_0 \gamma^{-1}}(W^\gamma)^\frd$) given by vectors in $V^{H' \cap P_0}$ (for sufficiently small subgroups $H'$). It is now an easy matter to verify that the 
$G$-action on $X_{sm}$ identifies $X_{sm}$ with the smoothly induced representation $\ind_P^G(V)$. Thus we have shown the

\begin{sublemma} The canonical map $\ind_P^G(V) \ra X_{sm}$, $\varphi \mapsto [f \mapsto (g \mapsto f(g) \otimes \varphi(g))]$, is an isomorphism.
\end{sublemma}

\noindent {\it Step 3: $U_{sm}$ is non-zero for non-zero $U$.} Let $U \sub X$ be a non-zero closed $G$-invariant subspace. Let $f \in U$ be a non-zero element. Then, as we have seen before, $f$ takes values in a vector space $W' \otimes_K V^{H \cap P_0}$ for a sufficiently small compact open subgroup $H$. Therefore, $f$ is contained in

$$\bigoplus_{\gamma \in G_0/HP_0}
\Ind^H_{H \cap \gamma P_0 \gamma^{-1}}\left((W')^\gamma \otimes_K (V^{H \cap P_0})^\gamma \right)^\frd = \bigoplus_{\gamma \in G_0/HP_0}
\Ind^H_{H \cap \gamma P_0 \gamma^{-1}}\left((W')^\gamma\right)^\frd \otimes_K V^{H \cap P_0}$$

\noindent It follows from this that $U$ contains a non-zero $H$-invariant subspace of

$$\bigoplus_{\gamma \in G_0/HP_0}
\Ind^H_{H \cap \gamma P_0 \gamma^{-1}}\left((W')^\gamma \right)^\frd \otimes_K V^{H\cap P_0} \,.$$

\noindent Because the representations $\Ind^H_{H \cap \gamma P_0 \gamma^{-1}}(W^\gamma)$ are irreducible, and because there is no non-zero continuous intertwiner between any two such (with $H\gamma_1 P_0 \neq H \gamma_2 P_0$), any such subspace is isomorphic to

\vspace{-0.3cm}
$$\bigoplus_{\gamma \in G_0/HP_0}
\Ind^H_{H \cap \gamma P_0 \gamma^{-1}}\left((W')^\gamma \right)^\frd \otimes_K V_\gamma \,,$$

\noindent with a subspace $V_\gamma$ of $V^{H \cap P_0}$. Thus, any non-zero vector in some $V_\gamma$ gives rise to a non-zero $H$-equivariant homomorphism from $\Ind_{P_0}^{G_0}(W')^\frd|_H$ to $U$. 
Therefore, $U_{sm} \neq \{0\}$.

\vskip8pt

\noindent {\it Step 4: $\Ind^G_P(W')^\frd \otimes_K U_{sm}$ surjects onto $X$.} Let $\phi_0 \in U_{sm}$ be a non-zero element which we identify with an element of the smoothly induced representation $\ind^G_P(V)$. Without loss of generality we may assume $\phi_0(1) \neq 0$. Consider the $P$-morphism 
$\Pi: U_{sm} \ra V$ given by $\phi \mapsto \phi(1)$. As $\phi_0(1) \neq 0$ we deduce that the image of $\Pi$ is a non-zero subrepresentation of $V$, and is hence equal to $V$ (because $V$ was supposed to be irreducible).
Therefore, for any $v \in V$ there is some $\phi \in U_{sm}$ with $\phi(1) = v$. As $U_{sm}$ is a $G$-representation we find that for any $g \in G$ and any $v \in V$ there is some 
$\phi \in U_{sm}$ with $\phi(g) = v$. Then, on a neighborhood $N$ of $g$, the map $\phi$ is constant with value $v$ on this neighborhood.
Let $S \sub G_0$ be a compact locally analytic subset such that $G = S \cdot P$ and hence $G_0 = S \cdot P_0$. (That is, every $g \in G$ is the product $s \cdot p$ with uniquely determined $s \in S$ and $p \in P$). Then there is a compact open subset $S' \sub S$ and a compact open neighborhood of the identity $P' \sub P_0$ such that $S' \cdot P' \subset N$. Let $f: G_0 \ra W'$ be a function whose support is contained in $S' \cdot P_0$. Then the function $S \ra W' \otimes_K V$, $s \mapsto f(s) \otimes \phi(s)$, is equal to the function $S \ra W' \otimes_K V$, $s \mapsto f(s) \otimes v$.

\vskip5pt

Let $f \in \Ind^G_P(W' \otimes_K V)^\frd$ be any element. Then $f$ is uniquely determined by its restriction to $S$.
As we have seen above, the set $f(G_0)$ is contained in a finite-dimensional vector space 
$W' \otimes_K V_0$ with $V_0 \sub V$. Let $v_1, \ldots, v_r$ be a basis of $V_0$. Write 
$f(s) = f_1(s) \otimes v_1 +\ldots + f_r(s) \otimes v_r$ with locally analytic $W'$-valued functions $f_i$ on $S$. 
Extend each function $f_i$ to $G$ by $f_i(s \cdot p) = \rho'(p^{-1})(f_i(s))$, where $\rho'$ is the representation of $P$ on $W'$. 
Then $f_i \in \Ind^G_P(W')$ for all $i$. In fact, examining the action of the differential operators in $\frd$ shows that 
$f_i \in \Ind^G_P(W')^\frd$ for all $i$. For any $s \in S$ choose some $\phi_{i,s} \in U_{sm}$ such that $\phi_{i,s}(x) = v_i$ for all $x$ in a compact open neighborhood $N_{i,s} \sub S$ of $s$. As $S$ is compact, finitely many of the $N_{i,s}$ will cover $S$. We can then choose a (finite) disjoint refinement $(N_{i,j})_j$ of the finite covering. Then we restrict $f_i$ to each of these $N_{i,j} \cdot P$, and extend it by $0$ to $(S
  \setminus N_{i,j}) \cdot P$. Denote the function thus obtained by $f_{i,j}$. Again, all $f_{i,j}$ lie in $\Ind^G_P(W')^\frd$, and $x \mapsto f_{i,j}(x) \otimes \phi_{i,s(i,j)}(x) = f_{i,j}(x) \otimes v_i$, for a suitably chosen $s(i,j) \in S$. Obviously, we have: $f = \sum_{i,j} f_{i,j} \otimes \phi_{i,s(i,j)}$. Indeed, by construction,
both $f|_S$ and the restriction of the sum to $S$ coincide. And both are functions in $\Ind^G_P(W' \otimes_K V)^\frd$; therefore, they are equal.
This shows that $f$ can be written as a sum of simple tensors of functions in $\Ind^G_P(W')^\frd$ and functions in $U_{sm}$ hence, the 
map $\Ind^G_P(W')^\frd \otimes_K U_{sm} \ra \Ind^G_P(W' \otimes_K V)^\frd = X$ is surjective. \qed

\vskip18pt

\begin{exam}\label{Example}
a) Let $M$ be a finite-dimensional irreducible algebraic ${\bf G}$-module, so that in the above theorem we may choose ${\bf P=G}$. Then we deduce that $\cF^G_G(M,V)=M \otimes_K V$ is topologically irreducible for any smooth irreducible representation $V$ of $G$. This result was already proved by D. Prasad \cite[app.]{ST4}, and our proof is modeled after his approach.

b) Let $M= U(\frg) \otimes_{U(\frp)} W \in \cO^\frp_\alg$ be an irreducible generalized Verma module for some irreducible $\bL_\bP$-representation $W$. Then by Cor. \ref{locfinitesubalgebra} the parabolic subalgebra $\frp$ is maximal in the above sense. It follows  that $\cF^G_P(M)=\Ind^G_P(W')$ is topologically irreducible. This result was proved in \cite{OS} for an arbitrary irreducible finite-dimensional locally analytic $L_P$-representation $W.$
\end{exam}

\section{Composition series}\label{Composition series}

\begin{para} Before we formulate our main result in this section, we recall the definition of (smooth) generalized Steinberg representations, cf. \cite{BW}, \cite{Ca}.
Let $P$ be a standard parabolic subgroup of $G$ and denote by $i^G_P=\ind^G_P(1)=C^\infty(G/P,K)$ the smooth induced representation of locally constant functions on $G/P.$ The generalized Steinberg representation to $P$ is the quotient

$$v^G_P= i^G_P/\sum_{P\subsetneq Q \subset G}i^G_Q \,.$$

\noindent This is an irreducible $G$-representation and all irreducible subquotients of $i^G_P$ are of the form $v^G_Q$ with $P\subset Q$. Each such representation occurs with multiplicity one, cf. loc.cit.
\end{para}

\setcounter{enumi}{0}

\begin{para} {\it How to obtain a composition series for $\cF^G_P(M,V)$.} We are going to describe a method for finding a composition series of the representation $\cF^G_P(M,V)$. Let $M$ be an object of $\cO^\frp_\alg$ and $V$ an object of $\Rep^{\infty,a}_K(L_P)$ of finite length. Let $(0) = V_0 \subsetneq V_1 \subsetneq \ldots \subsetneq V_r = V$ be a composition series of smooth $L_P$-representations for $V$. By the exactness of $\cF^G_P$ in the second argument, cf. Prop. \ref{exact_in_both}, we get a chain of inclusions

\vspace{-0.3cm}
$$(0) = \cF^G_P(M,V_0) \subsetneq  \cF^G_P(M,V_1) \subsetneq \ldots \subsetneq  \cF^G_P(M,V_r) =  \cF^G_P(M,V) \,.$$

\noindent Further the quotient $\cF^G_P(M,V_{i+1})/ \cF^G_P(M,V_i)$ is isomorphic to $\cF^G_P(M,V_{i+1}/V_i)$. We may hence assume that $V$ is irreducible.

Let $M$ be an object of the category $\cO_\alg^\frp.$ Then it has a
Jordan-H\"older series

\vspace{-0.3cm}
$$M=M^0 \supsetneq M^1 \supsetneq M^2 \supsetneq \ldots \supsetneq M^r
\supsetneq M^{r+1}=(0) $$

\noindent in the same category. Thus we may apply our functor $\cF^G_P$ to the
pair $(M,V)$. We get a sequence of surjections

\vspace{-0.3cm}
$$\cF^G_P(M,V) = \cF^G_P(M^0,V) \stackrel{p_0}{\lra} \cF^G_P(M^1,V)
\stackrel{p_1}{\lra} \cF^G_P(M^2,V) \stackrel{p_2}{\lra} \ldots
\stackrel{p_{r-1}}{\lra} \cF^G_P(M^r,V) \stackrel{p_r}{\lra} (0) \,.$$


\noindent For any integer $i$ with $0\leq i\leq r,$ we put

\vspace{-0.3cm}
$$q_i:=p_i\circ p_{i-1} \circ \cdots \circ p_1 \circ p_0 \,.$$

\noindent and

\vspace{-0.3cm}
$$\cF^i:=\ker(q_i)$$

\noindent which is a closed subrepresentation of $\cF^G_P(M,V)$. We obtain a filtration

\vspace{-0.3cm}
\begin{numequation}\label{filtration_neu}
\cF^{-1}=(0)\subset \cF^0 \subset \cdots \subset \cF^{r-1} \subset
\cF^{r}=\cF^G_P(M,V)
\end{numequation}
\noindent by closed subspaces with

\vspace{-0.3cm}
$$\cF^i/\cF^{i-1} \cong \cF^G_P(M^i/M^{i+1},V) \,.$$


Let $Q_i = L_i \cdot U_i \supset P$ be the standard parabolic subgroup
which is maximal for $M^i/M^{i+1}$. Then

$$\cF^G_P(M^i/M^{i+1},V) = \cF^G_{Q_i}(M^i/M^{i+1},i^{L_i}_{L_P\cdot
L_i\cap P}(V)) \,.$$

\noindent Then we can choose a Jordan-H\"older series for $i^{L_i}_{L_P\cdot
L_i\cap P}(V)$ and we obtain a Jordan-H\"older series for

\vspace{-0.3cm}
$$\cF^G_{Q_i}(M^i/M^{i+1}, i^{L_i}_{L_P\cdot L_i\cap P}(V)).$$

\noindent By refining the filtration \ref{filtration_neu} we get thus a
Jordan-H\"older series of $\cF^G_P(M)$.

In the case where $V$ is actually the
trivial representation the irreducible subquotients of the smooth
representation $i^{L_i}_{L_P\cdot L_i\cap P}=i^{Q_i}_P$ are the generalized
Steinberg representations $v^{Q_i}_Q$ with $Q_i\supset Q\supset P$, recalled above. Thus the
irreducible subquotients of $\cF^G_P(M,1) = \cF^G_P(M)$ are of the form
$\cF^G_{Q_i}(M^i/M^{i+1},v^{Q_i}_Q),\; i=1,\ldots,r$, and where $Q_i\supset Q\supset P$.

\end{para}

\vskip8pt

\begin{rmk}
Independently of our work, B. Schraen \cite{Schr} determined the Jordan-H\"older series for certain subquotients of parabolically induced locally analytic representations for the group ${\rm GL}_3(\bbQ_p)$, including the locally analytic Steinberg representation.
\end{rmk}


\section{Applications to equivariant vector bundles on Drinfeld's half space}

\setcounter{enumi}{1}

Let $\cX$ be Drinfeld's half space of dimension $d\geq1$ over $K$. This is a rigid-analytic variety over $K$ given by the complement of all  $K$-rational hyperplanes in projective space $\bbP_K^d,$ i.e.,

$$\cX = \bbP_K^d\setminus \bigcup \nolimits_{H \subsetneqq K^{d+1}}\mathbb \bbP(H),$$
where $H$ runs over the set of $K$-rational hyperplanes in $K^{d+1}.$

\noindent There is  natural action of $G = \GL_{d+1}(K)$ on $\cX$ induced by the algebraic
action $m: \bG \times \bbP^d_K \rightarrow \bbP^d_K$ of $\bG = \GL_{d+1/K}$  defined by

$$g\cdot [q_0:\cdots :q_d]:=m(g,[q_0:\cdots :q_d]):= [q_0:\cdots :q_d]g^{-1} \,.$$

\noindent It is known that $\cX$ is a Stein space \cite[\S 1, Prop. 4]{ScStI}. Moreover, for every homogeneous vector bundle $\cE$  on $\bbP_K^d$ the space of sections $\cE(\cX)=H^0(\cX,\cE)$ has the structure of a $K$-Fr\'echet space equipped with a continuous $G$-action. Its topological dual
$\cE(\cX)'$ is a locally analytic $G$-representation, cf. \cite{ST3,O}.

In \cite{O} one of us studied the structure of $\cE(\cX)$ for homogeneous  vector bundles  $\cE$ on $\bbP^d_K$. The theorem below generalizes the result of Schneider and Teitelbaum \cite{ST3} for the canonical bundle
$\cE = \Omega^d$ resp. Pohlkamp \cite{Po} for the structure sheaf $\cE = \cO$.

\begin{thm}\label{TheoremVB}
Let $\cE$ be a homogeneous vector bundle on $\bbP^d_K.$ There is a $G$-equivariant filtration by closed $K$-subspaces of
$\cE(\cX)$
\begin{numequation}\label{filtration}
 \cE(\cX)=\cE(\cX)^0 \supset \cE(\cX)^{1} \supset \cdots \supset \cE(\cX)^{d-1} \supset \cE(\cX)^d =  H^0(\bbP^d,\cE),
\end{numequation}
\noindent such that for $j=0,\ldots,d-1,$ there are extensions  of locally analytic $G$-representations

\vspace{-0.3cm}
\begin{numequation}\label{extensions}
 0 \rightarrow v^{G}_{{P_{(j+1,1,\ldots,1)}}}(H^{d-j}(\bbP^d_K,\cE)') \rightarrow (\cE(\cX)^{j}/\cE(X)^{j+1})' \rightarrow  \Ind^{G}_{P_{(j+1,d-j)}}(U_j')^{\frd_j} \rightarrow 0.
\end{numequation}
\end{thm}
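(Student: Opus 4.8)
\noindent {\it Sketch of the argument.} The plan is to reduce Theorem \ref{TheoremVB} to a local cohomology computation on $\bbP^d_K$ relative to the hyperplane arrangement whose complement is $\cX$, and then to translate the resulting graded pieces into the language of the functors $\cF^G_P$. Throughout one uses that $\cX$ is a Stein space, so $H^q(\cX,\cF)=0$ for $q>0$, and that $H^q(\bbP^d_K,\cF)$ is finite dimensional for all $q$. First I would introduce the $G$-stable stratification of $\bbP^d_K$ by unions of $K$-rational linear subvarieties: for $0\le t\le d$ set $Y_t=\bigcup_{\dim_K W=d+1-t}\bbP(W)$, the union of all $K$-rational linear subspaces of codimension $t$, so that $\bbP^d_K=Y_0\supset Y_1=\bbP^d_K\setminus\cX\supset\cdots\supset Y_d$, each $Y_t$ is closed, and $Y_t\setminus Y_{t+1}$ is the disjoint union, over the $K$-rational linear subvarieties $Z$ of codimension $t$, of the Drinfeld half spaces of the $Z\cong\bbP^{d-t}_K$. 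Feeding this decreasing filtration by closed subsets into the associated local cohomology exact couple, together with the vanishing $H^{>0}(\cX,\cF)=0$, produces a finite decreasing filtration of the Fr\'echet space $\cF(\cX)$ by closed $G$-stable subspaces; this is (the predual of) the filtration built in \cite{O}, and after reindexing so that $\cF(\cX)^d=H^0(\bbP^d_K,\cF)$ it is exactly \eqref{filtration}. The point which forces us to redo the computation of loc. cit. is to pin down the $G$-equivariant structure on the graded pieces correctly.

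Next I would identify $\cF(\cX)^j/\cF(\cX)^{j+1}$. The group $G$ acts transitively on the set of $j$-dimensional $K$-rational linear subvarieties $Z=\bbP(W)\subset\bbP^d_K$, with stabiliser the maximal standard parabolic $P_{(j+1,d-j)}$, so the contribution of the stratum $Y_{d-j}\setminus Y_{d-j+1}$ is a representation induced from $P_{(j+1,d-j)}$: it is built from the local cohomology of $\cF$ along a single such $Z$, which by purity and the homogeneity of $\cF$ is computed by a Bott/BGG-type formula as $H^0$ of $\cF$ twisted by (powers of) the conormal bundle along $Z$, restricted to the half space $\cX_Z\subset Z$. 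Dualising by means of \eqref{iso_ind_dual}, and using that the half-space cohomology of the conormal twist of $\cF|_Z$ is again — by loc. cit. or by induction on $d$ — governed by a module in category $\cO^\frp_\alg$, one rewrites this dual as $\Ind^{an,G}_{P_{(j+1,d-j)}}(U_j')^{\frd_j}$ with the smooth $L_{P_{(j+1,d-j)}}$-representation $U_j$ and the module of differential equations $\frd_j$ appearing in the statement; here $\frd_j$ is obtained precisely as the kernel of the generalised Verma presentation \eqref{basicsequence} of the category-$\cO$ module attached to the conormal twist, exactly as in Sections 3 and 5.

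The remaining ingredient is the short exact sequence \eqref{extensions}. The local cohomology sequence along $Y_{d-j}\setminus Y_{d-j+1}$ does not only see a single stratum: the part of $H^{d-j}_{Y_{d-j}}(\bbP^d_K,\cF)$ which interacts with the coherent cohomology $H^{d-j}(\bbP^d_K,\cF)$ contributes a subobject recording the combinatorics of flags of linear subvarieties lying above a fixed $j$-plane. On the smooth level this combinatorial object is the generalised Steinberg representation of the building of $\GL_{d-j}(K)$ inflated to $G$, i.e. $v^G_{P_{(j+1,1,\ldots,1)}}$, with coefficients in $H^{d-j}(\bbP^d_K,\cF)'$; it sits inside $(\cF(\cX)^j/\cF(\cX)^{j+1})'$ with quotient the induced piece described above, which gives \eqref{extensions}. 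Finally one checks that all subspaces occurring are closed and the sequences strict, using that $\Ind^{an,G}_P(\cdot)^\frd$ and the smooth inductions are admissible (Sections 3 and 4), so that the duality between projective limits of Fr\'echet spaces and inductive limits of Banach spaces, as in Propositions \ref{Duality} and \ref{DualityII}, applies.

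The hard part, as already in \cite{O} and \cite{ST3}, is the homological and topological bookkeeping: the hyperplane arrangement is infinite, so the local cohomology must be computed as a limit over finite sub-arrangements (a ``fundamental complex'' argument), and one must carry strictness and closedness through every dualisation and every passage to a limit. Interlaced with this is the correct determination of the $G$-action on the graded pieces — the point treated imprecisely in loc. cit. — and the identification of the modules $\frd_j$ via the parabolic BGG resolution of the relevant conormal twists. \qed
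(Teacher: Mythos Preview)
This theorem is not proved in the present paper: it is quoted from \cite{O}, where the filtration and the description of its graded pieces are established. The paper here only \emph{uses} Theorem \ref{TheoremVB} as input, and its own contribution in this section is the subsequent material (Propositions \ref{Propositionweight} and \ref{Propositionweight_b} and the final Corollary) showing that, for line bundles, the outer terms of \eqref{extensions} are irreducible via Theorem \ref{irredII}.

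That said, your sketch is a fair outline of the argument actually carried out in \cite{O}: one stratifies $\bbP^d_K\setminus\cX$ by the closed unions $Y_t$ of $K$-rational linear subvarieties of fixed codimension, feeds this into a local-cohomology spectral sequence (or rather an acyclic ``fundamental complex'' built from a cofinal system of finite sub-arrangements), uses the Stein property of $\cX$ to collapse higher cohomology, and then identifies each graded piece as a $G$-representation induced from the stabiliser $P_{(j+1,d-j)}$ of a single $j$-plane. The Steinberg contribution does arise from the combinatorics of the simplicial structure on the set of flags, exactly as you say. Two points in your sketch are looser than what \cite{O} actually does: the local cohomology along a single $Z$ is not computed by an abstract purity statement but by an explicit \v{C}ech/Cousin complex (this is where the explicit monomial bases in Proposition \ref{Propositionweight} come from), and the appearance of the smooth factor $v^{L_{(j+1,d-j)}}_{L_{(j+1,d-j)}\cap B}$ in $U_j$ is not obtained by an induction on $d$ but directly from the combinatorics of the arrangement inside a fixed $Z$. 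These are refinements rather than gaps; your overall strategy matches the source.
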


Here, for a decomposition $(n_1,\ldots,n_s)$ of $d+1$, the symbol $P_{(n_1,\ldots,n_s)}$ denotes the corresponding lower standard parabolic subgroup of $G$. The module $v^{G}_{{P_{(j+1,1,\ldots,1)}}}(H^{d-j}(\bbP^d_K,\cE)')$ is a generalized Steinberg representation with coefficients in the finite-dimensional  algebraic $G$-module $H^{d-j}(\bbP^d_K,\cE)'$.

\noindent The $P_{(j+1,d-j)}$-representation $U_j'$ is a tensor product $N_{j}' \otimes_K v^{\GL_{d-j}}_{\GL_{d-j}\cap B}$ of an 
algebraic representation $N_{j}'$ and the Steinberg representation $v^{\GL_{d-j}}_{\GL_{d-j}\cap B}$ of the factor 
$\GL_{d-j}$ of the Levi subgroup $L_{(j+1,d-j)} \subset P_{(j+1,d-j)}$. The first one is characterized by the property that it generates the
kernel $\tilde{H}^{d-j}_{\bbP^j_K} (\bbP^d_K, \cE)$ of the natural  homomorphism
$$H^{d-j}_{\bbP^j_K} (\bbP^d_K, \cE) \rightarrow  H^{d-j}(\bbP^d_K,\cE)$$

\noindent as a module with respect to $U(\frg)$. Here $\bbP^j_K=V(X_{j+1},\ldots,X_d)$ is the linear subvariety of $\bbP^d_K$ defined by the vanishing of the coordinates $X_{j+1},\ldots,X_d.$ The module $\frd_j$ is
just the kernel of the induced surjection
$U(\frg) \otimes_{U(\frp_{(j+1,d-j)})} N_j' \to \tilde{H}^{d-j}_{\bbP^j_K} (\bbP^d_K, \cE)$.

\vskip5pt

In the case where $\cE$  arises from an irreducible representation of the Levi subgroup $L_{(1,d)},$  we could make our result more precise, i.e.,  concerning the structure of $N_j.$ Rather than recalling this result in full generality, we restrict our attention from now on  to homogeneous line bundles on $\bbP^d_K$, where we will get even a more precise formula, cf. Prop. \ref{Propositionweight_b}.

\vskip5pt

Let $s\in \bbZ$ and denote by  $\lambda '=(s,\ldots,s) \in \bbZ^d$ the constant integral weight of $\GL_d$.
Let $r=\lambda_0\in \bbZ$ and set
$$\lambda:=(r,s,\ldots,s)\in \bbZ^{d+1} \,.$$

\noindent We denote by $\cL_\lambda$ the homogeneous line bundle on $\bbP^d_K$ such that its fibre in the base point is the irreducible algebraic $L_{(1,d)}$-representation corresponding to $\lambda$. Then we obtain
\begin{numequation}\label{linebundle}
 \cL_\lambda=\cO(r-s)
\end{numequation}

\noindent where the $G$-linearization is given by the tensor product of the natural one on $\cO(r-s) $ with the character $\det^s$.

Put $w_j:=s_j\cdots s_1,$ where $s_i\in W$ is the (standard) simple reflection in the Weyl group $W \cong S_{d+1}$ of $G$.
Recall that $\cdot$ denotes the dot action of $W$ on $X^\ast({\bf T})_{\bbQ}$.
If $ \chi=(\chi_0,\ldots,\chi_d)\in \bbZ^{d+1}$ we get
$$w_i \cdot \chi =   (\chi_1-1,\chi_2-1,\ldots, \chi_i-1,\chi_0 +i , \chi_{i+1},\ldots,\chi_d) \,.$$
Hence for $\chi=\lambda=(r,s,\ldots,s)$, we compute

\vspace{-0.3cm}
\begin{eqnarray*}
w_0 \cdot \lambda & = &  \lambda \\ 
w_1 \cdot \lambda & = &  (s-1,r+1 ,s,\ldots,s) \\
& \vdots & \\
w_i \cdot \lambda & = &  (s-1,s-1,\ldots, s-1,r+i , s,\ldots,s) \\
& \vdots & \\
w_d \cdot \lambda & = &  (s-1 ,s-1,\ldots,s-1,r+d)\,.
\end{eqnarray*}

\vskip8pt

In particular, there is at most one integer $0\leq i_0 \leq d$, such that $w_{i_0}\cdot \lambda$ is dominant with respect to the Borel subgroup $\bB^+$ of upper triangular matrices. In fact, this integer is characterized by the non-vanishing of $H^{i_0}(\bbP^d_K, \cL_\lambda)$ (cf. \cite[Thm. IV']{Bo} resp. \cite[Thm. III.5.1]{Ha}),
which is $i_0=0$ for $r\geq s$ resp. $i_0=d$ for $s \geq r+d+1$.
Otherwise, there is a unique integer $i_0 < d$ with $w_{i_0}\cdot \lambda = w_{i_0+1}\cdot \lambda.$ This is the case for $0\leq i_0=s-r-1<d+1.$ We get
\begin{numequation}\label{domordkl}
w_{i}\cdot \lambda \succ  w_{i+1}\cdot \lambda
\end{numequation}
\noindent for all $i\geq i_0$ (resp. $i > i_0$ if $w_{i_0}\cdot \lambda = w_{i_0+1}\cdot \lambda$), and
\begin{numequation}\label{domordgr}
w_{i}\cdot \lambda \prec w_{i+1}\cdot \lambda
\end{numequation}
\noindent for all $i<i_0$, with respect to the dominance order $\succ$ on $X^\ast({\bf T})_{\bbQ}$. We put

$$\mu_{i,\lambda}:=\left\{
\begin{array}{cc}
w_{i-1}\cdot \lambda  & : i \leq i_0 \\
w_{i} \cdot \lambda & : i > i_0 \,
\end{array} \right\} \,,i=1,\ldots,d.$$

\noindent This is a ${\bf L_{(i,d-i+1)} }$-dominant weight (with respect to the Borel subgroup ${\bf L_{(i,d-i+1)}} \cap {\bf B^+}$).
Let $V_{i,\lambda}$ be the finite-dimensional irreducible ${\bf L_{(i,d-i+1)} }$-module
with highest weight $\mu_{i,\lambda}.$ Let ${\bf P^+_{(i,d-i+1)}}$ be the upper triangular parabolic subgroup to the decomposition $(i,d+1-i)$ and denote by ${\bf U^+_{(i,d-i+1)}}$ its unipotent radical. By considering the trivial action of ${\bf U^+_{(i,d-i+1)}}$ on $V_{i,\lambda}$, we may view it as a ${\bf P^+_{(i,d-i+1)}}$-module.

In the following we identify the linear subvarieties $\bbP^{d-i}_K, \,0\leq i\leq d,$ with the closed subschemes $V(X_0,\ldots,X_{i-1}) \subset \bbP^d_K$ defined by the vanishing  of the first $i$ coordinate functions.
Note that the stabilizer of this subvariety in ${\bf G}$  is just ${\bf P^+_{(i,d+1-i)}}.$
\footnote{We note that for formulating Thm. \ref{TheoremVB} we have used in loc.cit. the identification of $\bbP^{d-i}_K$ with $V(X_{d-i+1},\ldots,X_d)$. Therefore the standard parabolic subgroups used there are lower (block) triangular. Afterwards we used the conjugacy of  $V(X_0,\ldots,X_{i-1})$ and  $V(X_{d-i+1},\ldots,X_d)$ within  $\bbP^d_K$ via the action of ${\bf G}$ on $\bbP^d_K.$}
In loc.cit. we  saw that we can realize $V_{i,\lambda}$ as a submodule of $\tilde{H}^{i}_{\bbP^{d-i}_K}(\bbP^d_K,\cL_\lambda)$. 

In fact, $\tilde{H}^{i}_{\bbP^{d-i}_K}(\bbP^d_K,\cL_\lambda)$ is naturally a module for ${\bf P^+_{(i,d-i+1)}} \ltimes U(\frg)$. This symbolic notation signifies that it is both a $U(\frg)$-module and an algebraic representation of ${\bf P^+_{(i,d-i+1)}}$, and that these two 
structures are compatible in the sense that $h.(\frx.(h^{-1}.v)) = {\rm Ad}(h)(\frx).v$ for all $h \in {\bf P^+_{(i,d-i+1)}}$, $\frx \in \frg$, $v \in \tilde{H}^{i}_{\bbP^{d-i}_K}(\bbP^d_K,\cL_\lambda)$. 

The operation of $\frg$ is induced by the homogeneous line bundle. The second one is induced by functoriality since it is the stabilizer of $\bbP^{d-i}_K$.

\begin{prop}\label{Propositionweight}
For $1\leq i \leq d,$ the ${\bf P^+_{(i,d-i+1)}} \ltimes U(\frg)$-module
$\tilde{H}^{i}_{\bbP^{d-i}_K}(\bbP^d_K,\cL_\lambda)$ coincides with the  module $\bigoplus_{k_0,\ldots, k_{i-1} \leq 0  \atop {k_{i},\ldots, k_d \geq 0  \atop {k_0+\cdots + k_d=0}}} K\cdot X_0^{k_0}X_1^{k_1}\cdots X_d^{k_d} \cdot  V_{i,\lambda} .$
\end{prop}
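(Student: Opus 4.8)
The plan is to compute $\tilde H^i_{\bbP^{d-i}_K}(\bbP^d_K,\cL_\lambda)$ completely explicitly in terms of the homogeneous coordinate ring $S=K[X_0,\dots,X_d]$ of $\bbP^d_K$, and then to read off the stated description. Let $I=(X_0,\dots,X_{i-1})$ be the ideal of $\bbP^{d-i}_K=V(X_0,\dots,X_{i-1})$, and recall that $\cL_\lambda$ is $\cO(r-s)$ with its $G$-linearisation twisted by $\det^s$. Since $X_0,\dots,X_{i-1}$ is a regular sequence, $H^j_I(S)=0$ for $j\ne i$, and $H^i_I(S)$ is the cokernel of $\bigoplus_{a<i}S_{X_0\cdots\widehat{X_a}\cdots X_{i-1}}\to S_{X_0\cdots X_{i-1}}$; hence it has $K$-basis the monomials $X^k=X_0^{k_0}\cdots X_d^{k_d}$ with $k_0,\dots,k_{i-1}\le-1$ and $k_i,\dots,k_d\ge 0$. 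Comparing sheaf local cohomology with the local cohomology of the graded module, and using that $H^i(\bbP^d_K,\cO(r-s))=0$ for $1\le i\le d-1$, while for $i=d$ one passes from $H^d_{\bbP^0_K}(\bbP^d_K,\cL_\lambda)$ to the kernel of its map onto $H^d(\bbP^d_K,\cL_\lambda)$, one obtains in all cases that $\tilde H^i_{\bbP^{d-i}_K}(\bbP^d_K,\cL_\lambda)$ has $K$-basis $\{X^k\mid k_0,\dots,k_{i-1}\le-1,\ k_i,\dots,k_d\ge 0,\ \textstyle\sum_j k_j=r-s\}$, carrying the $\det^s$-twisted $G$-action.

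Next I make the two actions explicit on this basis: the $\frg=\mathfrak{gl}_{d+1}$-action induced by $\cL_\lambda$ is by the twisted differential operators $E_{ab}\mapsto X_a\partial_{X_b}$ for $a\ne b$, $E_{aa}\mapsto X_a\partial_{X_a}+s$; the geometric action of the stabiliser $\bP^+_{(i,d-i+1)}$ of $\bbP^{d-i}_K$ restricts on the Levi $\bL_{(i,d-i+1)}=\GL_i\times\GL_{d-i+1}$ to the monomial action on the variable groups $\{X_0,\dots,X_{i-1}\}$ and $\{X_i,\dots,X_d\}$, again twisted by $\det^s$, and the unipotent radical acts through the $E_{ab}$ with $a<i\le b$, each of which annihilates any monomial with $k_a=-1$. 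I then locate $V_{i,\lambda}$ inside this module by exhibiting a highest weight vector,
$$v^+=\begin{cases}X_0^{-1}\cdots X_{i-2}^{-1}X_{i-1}^{r+i-1-s}, & i\le i_0,\\[4pt] X_0^{-1}\cdots X_{i-1}^{-1}X_i^{r+i-s}, & i>i_0.\end{cases}$$
In each case $v^+$ lies in the basis above (using $i\le s-r-1$, resp.\ $i\ge s-r$), has total degree $r-s$, and adding the twist-shift $(s,\dots,s)$ to its exponent vector yields exactly $\mu_{i,\lambda}=w_{i-1}\cdot\lambda$, resp.\ $w_i\cdot\lambda$. Moreover $v^+$ is annihilated by the positive root vectors of $\bL_{(i,d-i+1)}\cap\bB^+$: those supported in the first block push a first-block exponent up to $\ge 0$, hence act by $0$ on $H^i_I(S)$, while those supported in the second block act via a $\partial_{X_b}$ in a variable not occurring in $v^+$. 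Therefore $U(\frl_{(i,d-i+1)})v^+\cong V_{i,\lambda}$, realised concretely as the span of the monomials $X_0^{-1-c_0}\cdots X_{i-1}^{-1-c_{i-1}}$ with $c_j\ge 0$ and $\sum c_j=s-r-i$ (when $i\le i_0$), resp.\ of the monomials $X_0^{-1}\cdots X_{i-1}^{-1}X_i^{b_i}\cdots X_d^{b_d}$ with $b_j\ge 0$ and $\sum b_j=r+i-s$ (when $i>i_0$); on $V_{i,\lambda}$ the unipotent radical acts by $0$, so $V_{i,\lambda}$ is a $\bP^+_{(i,d-i+1)}$-submodule.

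It then remains to check the displayed equality monomial by monomial. For any $m=(k_0,\dots,k_d)$ with $k_0,\dots,k_{i-1}\le 0$, $k_i,\dots,k_d\ge 0$ and $\sum_j k_j=0$, multiplication by $X^m$ preserves the basis inequalities and the total degree, so $X^m\cdot V_{i,\lambda}\subseteq\tilde H^i_{\bbP^{d-i}_K}(\bbP^d_K,\cL_\lambda)$. Conversely, let $X^k$ be a basis monomial of the target. If $i>i_0$, choose integers $0\le b_j\le k_j$ for $j\ge i$ with $\sum_{j\ge i}b_j=r+i-s$ — possible since $r+i-s\ge 0$ and $\sum_{j\ge i}k_j=(r-s)-\sum_{j<i}k_j\ge r+i-s$ — and put $m_j=k_j+1$ for $j<i$, $m_j=k_j-b_j$ for $j\ge i$; then $m$ is of the required type and $X^k=X^m\cdot\big(X_0^{-1}\cdots X_{i-1}^{-1}X_i^{b_i}\cdots X_d^{b_d}\big)\in X^m\cdot V_{i,\lambda}$. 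If $i\le i_0$, argue symmetrically with $0\le c_j\le|k_j|-1$ for $j<i$, $\sum_{j<i}c_j=s-r-i$ (possible because $\sum_{j<i}(|k_j|-1)=-\sum_{j<i}k_j-i\ge s-r-i$), $m_j=k_j+1+c_j$ for $j<i$, $m_j=k_j$ for $j\ge i$. Hence $\tilde H^i_{\bbP^{d-i}_K}(\bbP^d_K,\cL_\lambda)=\sum_m X^m\cdot V_{i,\lambda}$, which by the first paragraph is stable under $\frg$ and $\bP^+_{(i,d-i+1)}$; this is the assertion — with the right-hand side of the Proposition read as the $\bP^+_{(i,d-i+1)}\ltimes U(\frg)$-submodule generated by $V_{i,\lambda}$, i.e.\ spanned by the translates $X^m\cdot V_{i,\lambda}$ (which need not be linearly independent).

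I expect the main obstacle to be careful bookkeeping rather than a single hard idea: keeping the $\det^s$-twist consistent through the $\frg$-action, the Levi action and the $\bT$-weights; cleanly separating the two regimes $i\le i_0$ and $i>i_0$, according to whether $V_{i,\lambda}$ is carried by the first or by the second block of variables; and the boundary case $i=d$, where one must verify that passing to $\tilde H^d$ reinstates precisely the monomials with $k_d\ge 0$. The cohomological inputs (vanishing of intermediate cohomology of line bundles on $\bbP^d$, the \v{C}ech description of local cohomology, the comparison with the graded local cohomology module) are standard once the normalisations are fixed.
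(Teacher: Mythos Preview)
The paper does not give a proof here; it merely cites \cite{O}, Proposition~1.4.2. Your argument is a correct and self-contained proof along the standard lines that computation must take: the \v{C}ech description of $H^i_I(S)$ for the regular sequence $X_0,\dots,X_{i-1}$, the comparison with sheaf local cohomology via the long exact sequence for the open $U=\bbP^d\setminus\bbP^{d-i}$ (your treatment of the boundary cases $i=1$ and $i=d$ is right: for $i=d$ one has $\tilde H^d\cong H^{d-1}(U,\cO(r-s))$, which reinstates exactly the constraint $k_d\ge 0$), the explicit location of $V_{i,\lambda}$ via the highest weight vectors $v^+$ (these are the same vectors the paper uses in its Proposition~\ref{Propositionweight_b}), and the monomial-by-monomial verification of the span.

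Your closing remark that the translates $X^m\cdot V_{i,\lambda}$ need not be linearly independent is a valid observation: for $i>i_0$ two distinct pairs $(m,b)$ and $(m',b')$ with $m_j=m'_j$ for $j<i$ but $m_j+b_j=m'_j+b'_j$ for $j\ge i$ give the same monomial, so the $\bigoplus$ in the displayed formula should indeed be read as a generating sum inside the ambient space rather than as an internal direct sum. This does not affect the content of the assertion, only its typographical rendering.
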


\noindent \Pf This was proved in loc.cit. Prop. 1.4.2.
\qed

From this latter statement we deduce immediately the following corollary.

\begin{cor}
 For $1\leq i \leq d,$ the $U(\frg)$-module
$\tilde{H}^{i}_{\bbP^{d-i}_K}(\bbP^d_K,\cL_\lambda)$ lies in the category $\cO_\alg^{\frp^+_{(i,d-i+1)}}.$
\end{cor}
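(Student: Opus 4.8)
The plan is to read the statement off Proposition~\ref{Propositionweight}. Write $M:=\tilde{H}^{i}_{\bbP^{d-i}_K}(\bbP^d_K,\cL_\lambda)$ and grade $M=\bigoplus_{n\ge 0}M_n$, where $M_n$ is the span of those terms $X_0^{k_0}\cdots X_d^{k_d}\cdot V_{i,\lambda}$ in the description of Proposition~\ref{Propositionweight} with $n=k_i+\cdots+k_d=-(k_0+\cdots+k_{i-1})$. The degree-zero piece $M_0$ is precisely the copy of $V_{i,\lambda}$ sitting as the ``constant'' monomials; it is finite-dimensional, it is stable under $\frp^+_{(i,d-i+1)}=\frl_{(i,d-i+1)}\oplus\fru^+_{(i,d-i+1)}$ (the Levi part preserves the grading and $\fru^+_{(i,d-i+1)}$ lowers it, hence kills $M_0$), and it generates $M$ as a $U(\frg)$-module, since every monomial in the description is obtained from $V_{i,\lambda}$ by iterated application of the differential operators attached to the root vectors spanning the opposite nilradical $\fru^-$ of $\frp^+_{(i,d-i+1)}$. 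Thus $W:=V_{i,\lambda}\subset M$ is a finite-dimensional $\frp^+_{(i,d-i+1)}$-subrepresentation generating $M$, and we get a surjection of $U(\frg)$-modules $U(\frg)\otimes_{U(\frp^+_{(i,d-i+1)})}V_{i,\lambda}\twoheadrightarrow M$.

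Now $V_{i,\lambda}$ is the irreducible algebraic $\bL_{(i,d-i+1)}$-representation of highest weight $\mu_{i,\lambda}\in X^\ast(\bT)$, inflated to $\bP^+_{(i,d-i+1)}$ with trivial unipotent radical, so the generalized parabolic Verma module $U(\frg)\otimes_{U(\frp^+_{(i,d-i+1)})}V_{i,\lambda}$ is an object of $\cO^{\frp^+_{(i,d-i+1)}}_\alg$ exactly as in Example~\ref{Example_Verma}. Since $\cO^{\frp^+_{(i,d-i+1)}}_\alg=\cO_\alg\cap\cO^{\frp^+_{(i,d-i+1)}}$ is closed under quotients (both factors are), the corollary follows. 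If one prefers to argue directly, one checks the three axioms from Proposition~\ref{Propositionweight}: condition~(3) is the $U(\frg)$-generation by $V_{i,\lambda}$ just noted; for condition~(2), each $M_n$ is, as an $\frl_{(i,d-i+1)}=\mathfrak{gl}_i\times\mathfrak{gl}_{d-i+1}$-module, the tensor product of a symmetric power of the (dual of the) standard $\mathfrak{gl}_i$-module, a symmetric power of the standard $\mathfrak{gl}_{d-i+1}$-module, and $V_{i,\lambda}$, hence finite-dimensional and integrating to an algebraic $\bL_{(i,d-i+1)}$-representation, so $\frl_{(i,d-i+1)}$ acts semisimply and locally finitely and all $\bT$-weights lie in $X^\ast(\bT)$; and for condition~(1), $\fru^+_{(i,d-i+1)}$ maps $M_n$ into $M_{n-1}$ (with $M_{-1}=0$), hence acts locally nilpotently.

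The only delicate point is bookkeeping. Theorem~\ref{TheoremVB} was stated using the linear subvariety $V(X_{d-i+1},\ldots,X_d)$ and lower block–triangular parabolics, whereas Proposition~\ref{Propositionweight} and this corollary use $V(X_0,\ldots,X_{i-1})$ and the upper parabolic $\bP^+_{(i,d-i+1)}$; I would invoke the identification recalled in the paragraph before Proposition~\ref{Propositionweight} (conjugacy of these linear subvarieties under $\bG$) to be sure that the surjection produced by the construction of $\frd_j$ preceding Theorem~\ref{TheoremVB} is indeed the map $U(\frg)\otimes_{U(\frp^+_{(i,d-i+1)})}V_{i,\lambda}\twoheadrightarrow M$ above, and to fix the direction in which $\fru^\pm_{(i,d-i+1)}$ shifts the grading $n$ (equivalently, the sign in the $\frg$-action on the monomials $X_0^{k_0}\cdots X_d^{k_d}$ coming from the linearization $g\cdot[q]=[q]g^{-1}$). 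Everything else is a routine unwinding of Proposition~\ref{Propositionweight}.
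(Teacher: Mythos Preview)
Your proof is correct and follows the same approach the paper intends: the paper simply states that the corollary follows immediately from Proposition~\ref{Propositionweight}, and your argument is exactly the natural way to unpack that word ``immediately'' --- exhibiting $V_{i,\lambda}$ as a finite-dimensional generating $\frp^+_{(i,d-i+1)}$-submodule (which the paper records just before Proposition~\ref{Propositionweight}), hence obtaining a surjection from the parabolic Verma module $M_I(\mu_{i,\lambda})$, and then invoking closure of $\cO^{\frp^+}_{\alg}$ under quotients. Your direct verification of the axioms is a fine alternative; the only place to be a little careful is the claimed tensor-product description of $M_n$ as an $\frl_{(i,d-i+1)}$-module, which is morally right but requires tracking that the ``coefficient monomial times $V_{i,\lambda}$'' presentation is not the naive $\frl$-decomposition --- however this is irrelevant, since your primary Verma-quotient argument already settles the matter without it.
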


\begin{rmk}
It is well-known  that the above objects lie in $\cO,$ although, mostly the case of the full flag variety is considered, cf. e.g. \cite{Ku}, \cite{AL}. Indeed this can be seen  by using the Grothendieck-Cousin complex which was used in loc.cit. to compute  $\tilde{H}^{i}_{\bbP^{d-i}_K}(\bbP^d_K,\cL_\lambda)$.  Here all contributions of this complex are objects in $\cO.$ Hence the cohomology of this complex which computes $\tilde{H}^{i}_{\bbP^{d-i}_K}(\bbP^d_K,\cL_\lambda)$ lies in $\cO$, as well.
\end{rmk}

Recall that we denote for a character $\mu \in X^\ast({\bf T})$ by $L(\mu)$ the irreducible highest weight $U(\frg)$-module of weight $\mu$.

\setcounter{enumi}{0}

\begin{prop}\label{Propositionweight_b}
For $1\leq i \leq d,$ the ${\bf P^+_{(i,d-i+1)}} \ltimes U(\frg)$-module
$\tilde{H}^{i}_{\bbP^{d-i}_K}(\bbP^d_K,\cL_\lambda)$ is isomorphic to $L(\mu_{i,\lambda}).$
\end{prop}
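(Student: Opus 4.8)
\noindent{\it Proof strategy.} Write $\widetilde{H}=\tilde{H}^{i}_{\bbP^{d-i}_K}(\bbP^d_K,\cL_\lambda)$. The plan is first to reduce the statement to a simplicity assertion. By Proposition \ref{Propositionweight} (and the characterisation of $N_j'$ recalled before Theorem \ref{TheoremVB}), $\widetilde{H}$ is generated over $U(\frg)$ by its multidegree-zero part, which is the irreducible $\frl_{(i,d-i+1)}$-module $V_{i,\lambda}$ of highest weight $\mu_{i,\lambda}$; moreover $\fru^+_{(i,d-i+1)}$, acting by the first-order operators $X_a\partial_{X_b}$ with $a<i\le b$, annihilates this part. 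Hence $\widetilde{H}$ is a quotient of the generalised parabolic Verma module $M^+:=U(\frg)\otimes_{U(\frp^+_{(i,d-i+1)})}V_{i,\lambda}$ (an object of $\cO^{\frp^+_{(i,d-i+1)}}_\alg$, cf.\ Example \ref{Example_Verma}), say $M^+\twoheadrightarrow\widetilde{H}$ with kernel $\frd$. Since $M^+$ is a quotient of the Verma module $M(\mu_{i,\lambda})$, it has $L(\mu_{i,\lambda})$ as its only simple quotient; so the proposition is equivalent to the assertion that $\frd$ is the maximal submodule of $M^+$, i.e.\ that $\widetilde{H}$ is simple.

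To prove $\widetilde{H}$ simple I would work with the explicit model. By Proposition \ref{Propositionweight}, $\widetilde{H}=\bigoplus_{m\ge0}\widetilde{H}[m]$ is graded by the total bidegree $m=\sum_{j\ge i}k_j=-\sum_{j<i}k_j$, with $\widetilde{H}[0]=V_{i,\lambda}$ a finite-dimensional $\frl_{(i,d-i+1)}$-module; here $\frl_{(i,d-i+1)}$ preserves the grading, $\fru^+_{(i,d-i+1)}$ lowers $m$ by $1$, and $\fru^-_{(i,d-i+1)}$ raises it by $1$. One checks that $m$ is recovered from the $\frt$-weight (pair with a coweight that is central for $\frl_{(i,d-i+1)}$ and pairs to $1$ with $\epsilon_i,\dots,\epsilon_d$), so every weight vector of $\widetilde{H}$ is homogeneous; consequently any $\frn^+$-maximal vector of $\widetilde{H}$ is homogeneous and lies in the kernel of $\fru^+_{(i,d-i+1)}\colon\widetilde{H}[m]\to\widetilde{H}[m-1]$. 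It therefore suffices to show that for $m\ge1$ no $\frl_{(i,d-i+1)}$-highest weight vector of $\widetilde{H}[m]$ is annihilated by $\fru^+_{(i,d-i+1)}$; this is verified from the monomial description, using that $E_{ab}=X_a\partial_{X_b}$ ($a<i\le b$) applied to a top-degree monomial cannot vanish and that the combinatorics of the admissible multidegree cone (no positive relation among its extremal rays) excludes cancellation. Granting this, the maximal vector of a nonzero submodule $N'\subseteq\widetilde{H}$ must lie in $\widetilde{H}[0]=V_{i,\lambda}$, hence (as $V_{i,\lambda}$ is $\frl$-irreducible of highest weight $\mu_{i,\lambda}$) be proportional to the highest weight vector; it then generates $V_{i,\lambda}$, hence all of $\widetilde{H}$, so $N'=\widetilde{H}$ and $\widetilde{H}\cong L(\mu_{i,\lambda})$.

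I expect this last point — the absence of maximal vectors in positive bidegree — to be the real difficulty. As a check, or an alternative route, one can compare formal characters: Proposition \ref{Propositionweight} yields $\operatorname{ch}\widetilde{H}=\operatorname{ch}V_{i,\lambda}\cdot\sum_{m\ge0}h_m(e^{-\epsilon_0},\dots,e^{-\epsilon_{i-1}})\,h_m(e^{\epsilon_i},\dots,e^{\epsilon_d})$, and since $\mu_{i,\lambda}$ lies in the dot-orbit of the (at most simply singular) dominant weight $\lambda=(r,s,\dots,s)$ for the type-$A$ group $\GL_{d+1}$, one can determine $\operatorname{ch}L(\mu_{i,\lambda})$ — for instance inductively along the linear stratification, using Bott's theorem for the finite-dimensional pieces $H^\bullet(\bbP^d_K,\cL_\lambda)$ through the (reduced) Grothendieck--Cousin complex — and verify it equals the expression above. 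The delicate case in that approach is the singular range $2\le s-r\le d$, where $\mu_{i,\lambda}$ lies on a wall and $M^+$ genuinely has several composition factors.
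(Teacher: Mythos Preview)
Your strategy is correct, and the gap you worry about is much smaller than you think. The crucial observation you are missing is that in the explicit model of Proposition~\ref{Propositionweight} the weight spaces are one-dimensional: the monomial $X_0^{k_0}\cdots X_d^{k_d}$ has $\frt$-weight $(k_0,\ldots,k_d)$ (after the obvious normalisation), so distinct monomials have distinct weights. Consequently every weight vector is a scalar multiple of a single monomial, and there is no cancellation to exclude. Your step ``no $\frl$-highest maximal vector in positive bidegree'' then reduces to checking that a single monomial with $m\ge 1$ is not killed by all $E_{ab}=X_a\partial_{X_b}$, $a<i\le b$; since $m=\sum_{b\ge i}k_b\ge 1$ forces some $k_b>0$, the operator $E_{ab}$ multiplies by $k_b\neq 0$ and lands back in the module (the exponent $k_a$ increases from a strictly negative value to one that is still $\le 0$). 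So your argument closes immediately once you note the one-dimensionality of weight spaces; the character comparison is unnecessary.

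The paper's proof is organised slightly differently but uses the same underlying mechanism. After twisting to reduce to $s=0$, it splits into the ranges $r\ge 0$, $r\le -d-1$, and $-d<r<0$ (which determine whether $\mu_{i,\lambda}=w_i\cdot\lambda$ or $w_{i-1}\cdot\lambda$), writes down the highest weight monomial $v_{i,\lambda}$ explicitly in each case, and then, rather than excluding maximal vectors in positive degree, shows the dual statement: starting from an arbitrary monomial in a nonzero submodule $N$, one can apply a sequence of $L_{(k,l)}$'s (first with $k<i\le l$ to push all negative exponents to $-1$ or all nonnegative exponents to $0$, then within the Levi) to reach $v_{i,\lambda}$, hence $N=\widetilde{H}$. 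Both arguments exploit exactly that $L_{(k,l)}$ acts on a monomial by the nonzero scalar $k_l$ times another monomial; the paper's ``climb to the top'' and your ``nothing maximal above degree $0$'' are two sides of the same elementary computation.
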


\noindent \Pf In order to show the statement it suffices to check the following conditions.

\smallskip
1) $\tilde{H}^{i}_{\bbP^{d-i}_K}(\bbP^d_K,\cL_\lambda)$ is a highest weight module of weight $\mu_{i,\lambda}$.

\smallskip
2) $\tilde{H}^{i}_{\bbP^{d-i}_K}(\bbP^d_K,\cL_\lambda)$ is irreducible.

\noindent Because of identity \ref{linebundle} it suffices to check the conditions in the case where $s=0.$ Then $\cL_\lambda=\cO(r)$ is the r-th Serre twist with its natural $G$-linearization.
All cohomology groups are $K$-subspaces of $\bigoplus_{k_0,\ldots, k_{d}\in \bbZ} K\cdot X_0^{k_0}X_1^{k_1}\cdots  X_d^{k_d}$.
Here the action of $\frg$ is given as follows. For a root $\alpha=\alpha_{i,j}=\epsilon_i-\epsilon_j\in \Phi,$ let
$$L_\alpha :=L_{(i,j)} \in \frg_\alpha$$ be the standard generator of the
weight space $\frg_\alpha$  in $\frg.$ Then the action  of $\frg$  is determined by
\begin{numequation}\label{Lieaktion}
L_{(i,j)}\cdot X_0^{k_0}X_1^{k_1}\cdots  X_d^{k_d} = k_j\cdot \frac{X_i}{X_j}\cdot X_0^{k_0}X_1^{k_1}\cdots  X_d^{k_d}
\end{numequation}
and
\begin{equation*}
t\cdot X_0^{k_0}X_1^{k_1}\cdots  X_d^{k_d} = (\sum\nolimits_ i k_i t_i)\cdot X_0^{k_0}X_1^{k_1}\cdots  X_d^{k_d} \; \mbox { for }t=(t_0,\ldots,t_d) \in \frt.
\end{equation*}

We prove statements 1) and 2) case by case.

a) Case $r\geq 0$. Then $\mu_{i,\lambda}=w_i\cdot \lambda$ and
$$V_{i,\lambda}=  \bigoplus_{k_i,\ldots, k_{d} \geq 0  \atop {k_i+\cdots + k_d=r+i}} K\cdot X_0^{-1}X_1^{-1}\cdots X_{i-1}^{-1} X_i^{k_i}\cdots X_d^{k_d}.$$ Further  $v_{i,\lambda}=X_0^{-1}X_1^{-1}\cdots X_{i-1}^{-1} X_i^{r+i}$ is a maximal vector in $\tilde{H}^{i}_{\bbP^{d-i}_K}(\bbP^d_K,\cL_\lambda)$ of weight $\mu_{i,\lambda}$.

It is easy to see by formula \ref{Lieaktion} that $v_{i,\lambda}$ generates  $\tilde{H}^{i}_{\bbP^{d-i}_K}(\bbP^d_K,\cL_\lambda)$ as a $U(\frg)$-module, so condition 1) is satisfied. As for condition 2), let $N\subset \tilde{H}^{i}_{\bbP^{d-i}_K}(\bbP^d_K,\cL_\lambda) $ be a submodule, $N\neq(0)$. Then $N$ lies in the category $\cO$ as a submodule, hence $\frt$ acts semi-simply. Let $X_0^{k_0}X_1^{k_1}\cdots  X_d^{k_d}\in N$, where $k_0,\ldots k_{i-1}<0$, $k_i\,\ldots,k_d\geq 0$
and $\sum^d_{j=0} k_j=r.$ Then by repeated multiplication with $L_{(k,l)}$ with $0\leq k\leq i-1$ and $l\geq i$, we can achieve that  - up to scalar - $k_0=\cdots=k_{i-1}=-1.$ By further multiplying with $L_{(k,l)}$ where $k=i$ and $l\geq i+1,\ldots,d$, we see that $v_{i,\lambda}\in N.$ Thus $N=\tilde{H}^{i}_{\bbP^{d-i}_K}(\bbP^d_K,\cL_\lambda)$.

b) Case $r\leq -d-1$. Then $\mu_{i,\lambda}=w_{i-1}\cdot \lambda$ and
$$V_{i,\lambda}=  \bigoplus_{k_0,\ldots, k_{i-1} < 0  \atop {k_0+\cdots + k_{i-1}=r}} K\cdot X_0^{k_0}X_1^{k_1}\cdots X_{i-1}^{k_{i-1}}.$$ Further $X_0^{-1}X_1^{-1}\cdots X_{i-2}^{-1} X_{i-1}^{r+i-1}$ is a maximal vector in $\tilde{H}^{i}_{\bbP^{d-i}_K}(\bbP^d_K,\cL_\lambda)$ of weight $\mu_{i,\lambda}$.

It is easy to see by formula \ref{Lieaktion} that $v_{i,\lambda}$ generates  $\tilde{H}^{i}_{\bbP^{d-i}_K}(\bbP^d_K,\cL_\lambda)$ as a $U(\frg)$-module, so condition 1) is satisfied. As for condition 2), let $N\subset \tilde{H}^{i}_{\bbP^{d-i}_K}(\bbP^d_K,\cL_\lambda) $ be a submodule, $N\neq(0)$.  Let $X_0^{k_0}X_1^{k_1}\cdots  X_d^{k_d}\in N$, where $k_0,\ldots k_{i-1}<0$, $k_i\,\ldots,k_d\geq 0$
and $\sum^{d}_{j=0} k_j=r.$ Then by repeated multiplication with $L_{(k,l)}$ with $0\leq k\leq i-1$ and $l\geq i$, we can achieve that  - up to scalar - $k_i=\cdots=k_{d}=0.$ By further multiplying with $L_{(k,l)}$ where $k < i-1$ and $l=i-1$, we see that $v_{i,\lambda}\in N.$ Thus $N=\tilde{H}^{i}_{\bbP^{d-i}_K}(\bbP^d_K,\cL_\lambda)$.

c) Case $0>r > -d-1$. Then for $i\leq i_0=-r-1$, we have  $\mu_{i,\lambda}=w_{i-1}\cdot \lambda$  and
$$V_{i,\lambda}=  \bigoplus_{k_0,\ldots, k_{i-1} < 0  \atop {k_0+\cdots + k_{i-1}=r}} K\cdot X_0^{k_0}X_1^{k_1}\cdots X_{i-1}^{k_{i-1}}.$$ Further $X_0^{-1}X_1^{-1}\cdots X_{i-2}^{-1} X_{i-1}^{r+i-1}$ is a maximal vector in $\tilde{H}^{i}_{\bbP^{d-i}_K}(\bbP^d_K,\cL_\lambda)$ of weight $\mu_{i,\lambda}$.

For $i> i_0=-r-1$, we have  $\mu_{i,\lambda}=w_{i}\cdot \lambda$  and
$$V_{i,\lambda}=  \bigoplus_{k_0,\ldots, k_{i-1} < 0  \atop {k_0+\cdots + k_{i-1}=r}} K\cdot X_0^{k_0}X_1^{k_1}\cdots X_{i-1}^{k_{i-1}}.$$ Further $X_0^{-1}X_1^{-1}\cdots X_{i-2}^{-1} X_{i-1}^{r+i-1}$ is a highest weight vector in $\tilde{H}^{i}_{\bbP^{d-i}_K}(\bbP^d_K,\cL_\lambda)$ of weight $\mu_{i,\lambda}$.

Here the reasoning is a mixture of the previous cases. \qed

\bigskip
Now we use the above result for the computation of $\tilde{H}^i_{\bbP^{d-i}_K}(\bbP^d_K,\cE)$ where $\bbP^{d-i}_K$ is identified with
the closed subscheme $V_i=V(X_{d-i+1},\ldots ,X_d)$ of $\bbP^d_K.$ Consider the block matrix
$$ z_i:=\left( \begin{array}{cc} 0  & I_i \\  I_{d+1-i} & 0 \end{array} \right) \in G,$$
where $I_j\in {\rm GL}_{j}(K)$ denotes the $j\times j$-identity matrix. Then $V(X_0,\ldots X_{i-1})$ is transformed into $V(X_{d-i+1},\ldots ,X_d)$ under the action of $z_i$ on $\bbP^d_K.$ We have
\begin{equation}\label{conjugation}
z_i \cdot {\bf P_{(d-i+1,i)}} \cdot z_i^{-1} = {\bf P^+_{(i,d+1-i)}}
\end{equation}
and on the Levi subgroups the conjugacy map
is given by
$${\bf L_{(d-i+1,i)}} \ni\left( \begin{array}{cc} A  & 0 \\  0 & B \end{array} \right) \mapsto \left( \begin{array}{cc} B & 0 \\  0 & A \end{array} \right) \in {\bf L_{(i,d-i+1)}}.$$
Hence the  ${\bf P_{(d-i+1,i)}}\ltimes U(\frg)$-module  $\tilde{H}^{i}_{V_i}(\bbP^d_K,\cE),$
is  given by $\tilde{H}^i_{\bbP^{d-i}_K}(\bbP^d_K,\cE)$ twisted with the action of $z_i.$
In particular, we can choose $V_{i,\lambda}$ - equipped with its action of ${\bf P_{(d-i+1,i)}}$ via the isomorphism \ref{conjugation} - to be the representation $N_{d-i}$ of Thm. \ref{TheoremVB}. Its highest weight is $z_i^{-1}\cdot \mu_{i,\lambda}$.

\begin{cor} If $\cE=\cL_\lambda$ is a line bundle, then
the contributions $\Ind^{G}_{P_{(j+1,d-j)}}(U_j')^{\frd_j}$ in \ref{extensions} are topologically irreducible, $j=0,\ldots,d-1$.
\end{cor}

\noindent \Pf By definition of our functor $\cF^G_P$ there is for $j=0,\ldots,d-1,$ the identity
$\Ind^{G}_{P_{(j+1,d-j)}}(U_j')^{\frd_j}$ $=\cF^G_{P_{(j+1,d-j)}}(L(z_j^{-1}\cdot\mu_{j,\lambda})).$
By Thm. \ref{irredG_0} all these contributions are topologically irreducible since the occuring standard parabolic subgroups are
(proper)  maximal and the simple modules  $L(z_j^{-1}\cdot\mu_{j,\lambda})$ do not lie in $\cO^{\frg}$ (since they are not finite-dimensional). \qed

Now we consider the remaining ingredients of the filtration \ref{filtration}. For the line bundle $\cL_\lambda$, we compute using \cite[Thm. III.5.1]{Ha}

\vspace{-0.3cm}
\begin{eqnarray*}
H^{\ast}(\bbP^d_K,\cL_\lambda) & = & H^{i_0}(\bbP^d_K,\cL_\lambda) \\ \\ & = & \left\{ \begin{array}{cl}
{\rm Sym}^{r-s}(K^{d+1})\otimes \det^s & \mbox { for  $r-s \geq 0$ and $i_0=0$} \medskip \\ \medskip
{\rm Sym}^{-d-1-(r-s)}(K^{d+1})'\otimes \det^{s-1} & \mbox{ for $r-s \leq -d-1$ and $i_0=d$}  \\ \smallskip
(0) &  \mbox{ otherwise }
     \end{array}\right. .
\end{eqnarray*}

Hence we see that the filtration step $\cL_\lambda(\cX)^d$ vanishes or it is  irreducible.   Further we conclude
that the contribution  $v^{G}_{{P_{(j+1,1,\ldots,1)}}}(H^{d-j}(\bbP^d_K,\cE)')$ in \ref{extensions} vanishes for $j\neq 0.$
In the case $j=0$ this object coincides with $v^{G}_{B}(K) \otimes ({\rm Sym}^{-d-1-(r-s)}(K^{d+1})\otimes \det^{1-s})$
which is irreducible by \cite[app.]{ST4} resp. Thm. \ref{irredgeneral}.
Consequently, the filtration \ref{filtration} is essentially a Jordan-H\"older series of $H^0(\cX,\cL_\lambda)$, i.e.,
by refining the filtration in the case $r-s\leq d-1$ in the naive way we get a real Jordan-H\"older series.


\section{Appendix: some properties of highest weight modules in $\cO$}\label{HWM}

This section is about relations in a simple $U(\frg)$-module $M \in \cO^\frp_\alg$ of the form

$$y_\gamma^n \cdot v^+ = \sum_{\nu \in \cI_n} c_\nu y_{\beta_1}^{\nu_1} \cdot \ldots \cdot y_{\beta_t}^{\nu_t} \cdot v^+ \,,$$


\noindent where

- $\Phi^+ = \{\beta_1, \ldots, \beta_t\}$,

- $\cI_n$ consists of all $t$-tuples $(\nu_1, \ldots, \nu_t) \in \bbZ_{\ge 0}^t$ satisfying $\nu_1\beta_1 + \ldots +\nu_t \beta_t = n\gamma$,

- the elements $y_\gamma \in \frg_{-\gamma}$, $y_{\beta_i} \in \frg_{-\beta_i}$ are part of a Chevalley basis,

- $v^+$ is a highest weight vector for $M$,

- the standard parabolic subalgebra $\frp = \frp_I$ is maximal for $M$,

-  $\gamma \in \Phi^+ \setminus \Phi_I^+$,

- the coefficients $c_\nu$ are in $K$.

\vskip8pt

\noindent Our aim (cf. \ref{both_conditions}) is to show that there is at least one $\nu$ with $|c_\nu| \ge 1$ and $\nu_1 + \ldots + \nu_t \ge n$.

In particular, this result implies that $y_\gamma$ acts injectively on $M$ (take $c_\nu = 0$ for all $\nu$). In fact, we prove first this statement about the injectivity of the action of $y_\gamma$ (in \ref{injective}) and later use it to prove the more general result about the absolute value of (at least one) $c_\nu$.

That $y_\gamma$ acts injectively on $M$ (assuming $\gamma \notin \Phi_I$) is also contained in a paper by I. Dimitrov, O. Mathieu and 
I. Penkov, as was kindly pointed out to us by V. Mazorchuk, cf. \cite[3.6]{DMP}. Our proof is completely different from the proof given 
in loc.cit.

We begin with a standard commutator relation valid in any associative unital algebra $A$. For $x \in A$ we let $\ad(x): A \ra A$ be defined by $\ad(x)(z) = [x,z] = xz-zx$.
In the following we also write $[x^{(i)},z]$ for $\ad(x)^i(z)$, the value on $z$ of the $i$-th iteration of $\ad(x)$.

\begin{lemma}\label{generallemma} Let $x,z_1, \ldots, z_n \in A$. For all $k \in \bbZ_{\ge 0}$ one has

$$x^k \cdot z_1z_2 \ldots z_n = \sum_{
i_1 + \ldots + i_{n+1} = k} {k \choose i_1  \ldots  i_{n+1}} [x^{(i_1)},z_1]\cdot \ldots \cdot [x^{(i_n)},z_n]x^{i_{n+1}}$$


\noindent where, as usual,

$${k \choose i_1  \ldots  i_{n+1}} = \frac{k!}{(i_1!) \cdot \ldots \cdot (i_{n+1}!)}$$


\noindent Similarly,

$$[x^{(k)}, z_1z_2 \ldots z_n] = \sum_{
i_1 + \ldots + i_n = k} {k \choose i_1  \ldots  i_n} [x^{(i_1)},z_1]\cdot \ldots \cdot [x^{(i_n)},z_n] \;.$$
\end{lemma}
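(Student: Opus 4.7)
The plan is a short double induction, organized so that the second (commutator) identity is established first and then used to derive the first. The key structural observation is that $\mathrm{ad}(x): y \mapsto [x,y]$ is a derivation on $A$, i.e.\ $[x, yz] = [x,y]z + y[x,z]$, so iterating $\mathrm{ad}(x)$ gives a Leibniz-type formula.

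First I would prove the second identity by induction on $n$. The base case $n=1$ is tautological. The case $n=2$, namely
$$[x^{(k)}, uv] \;=\; \sum_{a+b=k} \binom{k}{a,\,b}\,[x^{(a)},u]\,[x^{(b)},v],$$
is a straightforward induction on $k$ starting from the derivation property and using Pascal's rule. The inductive step from $n-1$ to $n$ then proceeds by writing $z_1 z_2 \cdots z_n = z_1 \cdot (z_2 \cdots z_n)$, applying the $n=2$ formula, and expanding $[x^{(b)}, z_2 \cdots z_n]$ by the induction hypothesis. The multinomial coefficients combine via $\binom{k}{a,b}\binom{b}{i_2,\ldots,i_n} = \binom{k}{a,i_2,\ldots,i_n}$.

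Next I would establish the $n=1$ case of the first identity,
$$x^k z \;=\; \sum_{i+j=k}\binom{k}{i,\,j}\,[x^{(i)},z]\,x^j,$$
by induction on $k$. The inductive step rewrites $x \cdot [x^{(i)},z] = [x^{(i+1)},z] + [x^{(i)},z]\,x$ (which is just the definition of $[x^{(i+1)}, z]$), splits the resulting sum into two pieces, reindexes one of them, and recombines using Pascal's identity $\binom{k}{j-1} + \binom{k}{j} = \binom{k+1}{j}$.

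Finally, the general first identity follows by applying the $n=1$ case to $z := z_1 z_2 \cdots z_n$, giving
$$x^k \cdot z_1 \cdots z_n \;=\; \sum_{i+j=k}\binom{k}{i,\,j}\,[x^{(i)}, z_1 \cdots z_n]\,x^j,$$
and then expanding each $[x^{(i)}, z_1 \cdots z_n]$ via the second identity. The composite coefficient is
$$\binom{k}{i,\,j}\binom{i}{i_1,\ldots,i_n} \;=\; \frac{k!}{i_1!\cdots i_n!\, j!} \;=\; \binom{k}{i_1,\ldots,i_n,\,i_{n+1}},$$
with $i_{n+1}=j$, matching the claimed formula. There is no genuine obstacle here: the lemma is a purely formal identity valid in any associative unital algebra, and the only care required is in the bookkeeping of multinomial coefficients once the $n=1$ cases of both formulas are in hand.
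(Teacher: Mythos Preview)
Your proposal is correct and matches the paper's approach: the paper simply says ``This is straightforwardly proved by induction'' without further detail, and your outline is exactly such an induction, carefully organized. Your choice to prove the commutator identity first and then bootstrap the $x^k$-identity via the $n=1$ case is clean and works without issue.
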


\noindent \Pf This is straightforwardly proved by induction on $k$. \qed

\vskip18pt

\begin{lemma}\label{lemma1} Let $x \in \frg$, let $M$ be a
$U(\frg)$-module and $v \in M$.

\vskip8pt

\noindent (i) If $x$ acts locally finitely on $v$ (i.e., the $K$-vector space generated by $(x^i.v)_{i \ge 0}$ is finite-dimensional), then $x$ acts locally finitely on $U(\frg).v$.

\vskip8pt

\noindent (ii) If $x.v = 0$ and $[x,[x,y]] = 0$ for some $y \in \frg$, then

$$x^ny^n.v = n! [x,y]^n.v \;.$$

\end{lemma}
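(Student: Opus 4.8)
The plan is to derive both statements directly from the commutator identity in Lemma~\ref{generallemma}, with no further input.

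For part (a), I would first record two reductions: the set of vectors $w \in M$ on which $x$ acts locally finitely is a $K$-subspace of $M$, and $U(\frg).v$ is spanned by the vectors $u.v$ with $u = z_1 z_2 \cdots z_n$, $z_i \in \frg$ (since $\frg$ generates $U(\frg)$ as a $K$-algebra). Hence it is enough to show that $x$ acts locally finitely on each such $u.v$. Applying Lemma~\ref{generallemma} yields, for arbitrary $k \ge 0$,
$$x^k \cdot z_1 \cdots z_n = \sum_{i_1 + \cdots + i_{n+1} = k} {k \choose i_1 \cdots i_{n+1}} [x^{(i_1)}, z_1] \cdots [x^{(i_n)}, z_n]\, x^{i_{n+1}} \,,$$
where each $[x^{(i)}, z_j]$ lies in $\frg$, because $\ad(x)$ preserves $\frg$. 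Let $V_0 \subseteq M$ be the finite-dimensional span of $\{x^i.v : i \ge 0\}$. Then $x^{i_{n+1}}.v \in V_0$, and applying $[x^{(i_n)}, z_n], \ldots, [x^{(i_1)}, z_1]$ successively keeps the result inside the subspace $\frg \cdot(\frg \cdots(\frg \cdot V_0))$ with $n$ factors $\frg$, which is finite-dimensional and does not depend on $k$ (only $n$ operators from $\frg$ are ever applied, regardless of $k$). Hence $x^k.(u.v)$ lies in this fixed finite-dimensional space for all $k$, which proves the claim.

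For part (b), I would set $h = [x,y]$ and note that the hypothesis $[x,[x,y]] = 0$ is equivalent to $[x^{(i)}, y] = 0$ for all $i \ge 2$, while $[x^{(0)},y] = y$ and $[x^{(1)},y] = h$. Apply the first identity of Lemma~\ref{generallemma} with $k = n$ and $z_1 = \cdots = z_n = y$, and evaluate both sides at $v$. A summand survives only if $i_{n+1} = 0$ (otherwise $x^{i_{n+1}}.v = 0$ since $x.v = 0$) and $i_j \le 1$ for all $j \le n$ (otherwise $[x^{(i_j)},y] = 0$ in $U(\frg)$); together with $i_1 + \cdots + i_n = n$ this forces $i_1 = \cdots = i_n = 1$. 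The unique remaining term has multinomial coefficient $\tfrac{n!}{1! \cdots 1! \cdot 0!} = n!$ and equals $h^n.v$, so $x^n y^n.v = n!\,h^n.v = n!\,[x,y]^n.v$, as asserted.

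I do not expect a genuine obstacle. The only points needing minor care are, in (a), the observation that the ambient finite-dimensional space does not grow with $k$, and in (b) the routine verification that every summand other than the one with all $i_j = 1$ vanishes upon evaluation at $v$; both of these are immediate from the shape of the identity in Lemma~\ref{generallemma}.
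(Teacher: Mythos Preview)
Your proof is correct and follows essentially the same route as the paper: both parts are derived directly from the commutator identity of Lemma~\ref{generallemma}, with the same reductions and the same case analysis. Your presentation is slightly more explicit (spelling out the ambient finite-dimensional space in (a) and the multinomial coefficient in (b)), but the argument is identical in substance.
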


\noindent \Pf (i) It suffices to show that $x$ acts locally finitely on any element of the form $z_1 \ldots z_n.v$ 
(for arbitrary $n$ and arbitrary elements $z_1, \ldots, z_n \in \frg$).  Any element of $U(\frg)$ of the 
form $[x^{(i_1)},z_1]\cdot \ldots \cdot [x^{(i_n)},z_n]$ is contained in $\frg \cdot \ldots \cdot \frg$ ($n$ factors), and $\frg \cdot \ldots \cdot \frg$ is a finite-dimensional $K$-subspace of $U(\frg)$. As the elements $x^i.v$, $i \ge 0$, are all contained in a finite-dimensional vector space, it follows from the formula 
in Lemma \ref{generallemma} that all elements $x^i \cdot z_1 \ldots z_n.v$ are contained in a finite-dimensional $K$-vector space.

\vskip8pt

(ii) In the formula of Lemma \ref{generallemma} we let $z_i = y$, $1 \le i \le n$, and get that the term

\vspace{-0.3cm}
$$[x^{(i_1)},y]\cdot \ldots \cdot [x^{(i_n)},y]x^{i_{n+1}}.v$$


\noindent vanishes as soon as $i_{n+1} > 0$ or some $i_j > 1$ for $1 \le j \le n$. Therefore, the only non-zero term corresponds to $(i_1,\ldots, i_{n+1}) = (1, \ldots, 1, 0)$. \qed

\vskip12pt

As before, we let $\Phi = \Phi(\frg,\frt)$ be the root system and

$$\Phi^+ = \{\beta_1, \ldots, \beta_t\} \hskip10pt \mbox{and} \hskip10pt \Delta = \{\alpha_1, \ldots, \alpha_\ell\} \sub \Phi^+$$


\noindent be a set of positive roots and a corresponding basis.

\begin{lemma}\label{lemma2} Assume $\gamma \in \Phi^+$ is not simple and write  $\gamma = \alpha + \beta$ with $\alpha \in \Delta$ and $\beta \in \Phi^+$. Suppose that $i\beta - j \alpha$ is in $\Phi^+$ for some $i,j \in \bbZ_{>0}$.
\vskip8pt

\noindent (i) Then $(i\beta - j\alpha) - \gamma  =  (i-1)\beta -(j+1)\alpha$ is {\bf either} a positive root {\bf or} not in $\Phi \cup \{0\}$. Therefore: {\bf either} $[\frg_{i\beta - j\alpha}, \frg_{-\gamma}]$ is equal to a root space $\frg_\chi$ with $\chi = i'\beta - j'\alpha \in \Phi^+$ for some $i',j' \in \bbZ_{>0}$ {\bf or} $[\frg_{i\beta - j\alpha}, \frg_{-\gamma}] = 0$.
\vskip8pt

\noindent (ii) Moreover, $(i\beta - j\alpha) - \alpha  =  i\beta -(j+1)\alpha$ is {\bf either} a positive root {\bf or} not in $\Phi \cup \{0\}$. Therefore: {\bf either} $[\frg_{i\beta - j\alpha}, \frg_{-\alpha}]$ is equal to a root space $\frg_\chi$ with $\chi = i'\beta - j'\alpha \in \Phi^+$ for some $i',j' \in \bbZ_{>0}$ {\bf or} $[\frg_{i\beta - j\alpha}, \frg_{-\alpha}] = 0$.
\vskip8pt

\noindent (iii) Let $M$ be a $U(\frg)$-module and $v \in M$ be annihilated by the radical $\fru$ of $\frb$. Let $x \in \frg_\beta$ and $y \in \frg_{-\gamma}$. Then, for any sequence of non-negative integers $i_1, \ldots, i_n$ we have

\vspace{-0.3cm}
$$[x^{(i_1)},y]\cdot \ldots \cdot [x^{(i_n)},y].v = 0$$


\noindent if there is at least one $i_j > 1$.
\end{lemma}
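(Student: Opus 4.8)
\textbf{Plan of proof for Lemma \ref{lemma2}.}

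The plan is to reduce everything to a purely combinatorial statement about the lattice of roots of the form $i\beta - j\alpha$, and then to iterate the one-step observations (a) and (b). For part (a), I would argue as follows. We have $\gamma = \alpha + \beta$, so $(i\beta - j\alpha) - \gamma = (i-1)\beta - (j+1)\alpha$. The key point is that $\beta$ and $\alpha$ are linearly independent (since $\alpha$ is simple and $\beta \in \Phi^+$ with $\alpha + \beta \in \Phi$, so $\beta \neq \pm \alpha$), so the expression $(i-1)\beta - (j+1)\alpha$ is never $0$ unless $i=1$ and $j=-1$, which is excluded. It remains to rule out the possibility that $(i-1)\beta - (j+1)\alpha$ is a \emph{negative} root. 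Here I would use the $\alpha$-string through $i\beta - j\alpha$: since $\alpha$ is simple and $i\beta - j\alpha \in \Phi^+$ with $j > 0$, the coefficient of $\alpha$ in $i\beta - j\alpha$ (when expressed in the basis $\Delta$) is strictly negative only in the $\alpha$-slot contribution coming from $-j\alpha$ together with whatever $\beta$ contributes; the point is that $i\beta - j\alpha$ being a \emph{positive} root forces the $\beta$-part to dominate, and subtracting one more copy of $\beta$ and adding one copy of $\alpha$ cannot turn a positive root into a negative one in a single step without passing through $0$ — this is exactly the standard fact that if $\delta \in \Phi^+$ and $\delta - \eta \in \Phi$ for $\eta$ a positive root, then $\delta - \eta \in \Phi^+ \cup (-\Phi^+)$, and the sign is determined by comparing $\delta$ and $\eta$ in the root lattice; writing out the coefficients of $(i-1)\beta - (j+1)\alpha$ in terms of $\Delta$ and using that $i \ge 1$ shows all non-$\alpha$ coefficients are $\ge 0$, forcing it into $\Phi^+$ whenever it lies in $\Phi$. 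The conclusion about $[\frg_{i\beta-j\alpha}, \frg_{-\gamma}]$ is then immediate from the fact that the bracket of two root spaces is the root space of the sum if that sum is a root, and is $0$ otherwise; and $(i\beta - j\alpha) - \gamma = (i-1)\beta - (j+1)\alpha$ is, when a root, of the required form $i'\beta - j'\alpha$ with $i' = i-1 \ge 0$ and $j' = j+1 > 0$, and if $i' = 0$ we'd get $-(j+1)\alpha$, which is a root only if $j+1 = 1$, i.e. $j = 0$, contradicting $j > 0$; so in fact $i' > 0$.

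Part (b) is proved by the identical argument, replacing $\gamma$ by $\alpha$: $(i\beta - j\alpha) - \alpha = i\beta - (j+1)\alpha$, with $i \ge 1$ still, $j+1 > 0$, and the same positivity-of-coefficients reasoning shows this is either a positive root or not in $\Phi \cup \{0\}$; the bracket statement follows as before.

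For part (c), the plan is to induct on $n$, peeling off the rightmost factor. First note that $[x^{(0)}, y] = y \in \frg_{-\gamma}$, and since $\gamma \in \Phi^+ \setminus \{0\}$, $y$ does not annihilate $v$ in general, but $[x^{(i)}, y]$ for $i \ge 1$ lands in $\frg_{i\beta - \gamma} = \frg_{(i-1)\beta - \alpha}$, and for $i = 1$ this is $\frg_{\beta - \gamma} = \frg_{-\alpha}$, a negative root space; for $i \ge 2$ it is $\frg_{(i-1)\beta - \alpha}$. The crucial step: I claim that for $i \ge 1$, $[x^{(i)}, y]$ is a root vector for a root whose $\alpha$-coordinate (in the $\Delta$-expansion) is negative, i.e. it is a \emph{negative} root (or the space is zero); this follows from iterating part (a) (bracketing repeatedly with $x \in \frg_\beta$ and once with $y \in \frg_{-\gamma}$, tracking the $i\beta - j\alpha$ shape). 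Now suppose some $i_j > 1$; take the largest such index. Using Lemma \ref{generallemma} to expand the action on $v$ and the fact that $v$ is annihilated by $\fru = \bigoplus_{\delta \in \Phi^+} \frg_\delta$, I would show that whenever a product $[x^{(i_1)}, y] \cdots [x^{(i_n)}, y]$ with some $i_j > 1$ is applied to $v$, one can commute a \emph{positive} root vector (arising from a sub-bracket forced by $i_j > 1$ via part (b), which produces root vectors of the shape $i'\beta - j'\alpha$ that become positive once enough $\beta$'s accumulate — more precisely, $[x^{(i_j)}, \cdot]$ with $i_j \ge 2$ inserts an extra $\beta$, and combined with the $-\alpha$ already present the resulting root $i'\beta - j'\alpha$ has $i' \ge j'$ hence is positive) past the remaining factors to hit $v$ and annihilate it. The honest bookkeeping — showing that the "extra" copy of $\beta$ forces positivity and that the commutators generated along the way all remain of the controlled form $i\beta - j\alpha$ — is the main obstacle; I expect to organize it as a double induction, outer on $n$ and inner on $\sum_j i_j$, reducing the length or the total degree at each step, with the base case being the vanishing of a single positive root vector on $v$.

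\textbf{Main obstacle.} The delicate point is part (c): controlling that \emph{all} iterated commutators stay inside the two-dimensional cone of roots $\{i\beta - j\alpha\}$ and correctly identifying the sign of each as a function of $(i,j)$ — specifically that the presence of at least one $i_j \ge 2$ is exactly what tips some intermediate root into $\Phi^+$ so that annihilation by $\fru$ applies. Getting the induction to terminate (which quantity strictly decreases) is where the care is needed; parts (a) and (b) are the easy structural inputs that make it run.
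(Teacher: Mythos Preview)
Your treatment of parts (a) and (b) is essentially the paper's argument: since $\beta$ is not a multiple of the simple root $\alpha$, it involves some other simple root $\alpha'$ with positive coefficient, so $(i-1)\beta-(j+1)\alpha$ (resp.\ $i\beta-(j+1)\alpha$) has positive $\alpha'$-coefficient whenever $i\ge 2$ (resp.\ $i\ge 1$), forcing it into $\Phi^+$ if it is a root at all; the boundary case $i'=0$ gives $-(j+1)\alpha\notin\Phi\cup\{0\}$. This is fine.

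Part (c), however, has a genuine gap and an internal contradiction. You first assert that for all $i\ge 1$ the element $[x^{(i)},y]\in\frg_{(i-1)\beta-\alpha}$ is a \emph{negative} root vector; but by exactly the reasoning you used in (a)/(b), for $i\ge 2$ the root $(i-1)\beta-\alpha$ has positive $\alpha'$-coefficient and is therefore \emph{positive} (or the space is zero). You then reverse yourself and say the $i_j\ge 2$ factor produces a \emph{positive} root vector, which is correct --- but your justification ``$i'\ge j'$ hence positive'' is wrong: positivity of $i'\beta-j'\alpha$ has nothing to do with $i'\ge j'$, it follows from $i'\ge 1$ via the $\alpha'$-coefficient. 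More seriously, you never explain the mechanism by which this positive root vector reaches $v$. Your proposed double induction on $n$ and $\sum i_j$ is not set up, and ``commuting past the remaining factors'' is exactly the content to be proved.

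The paper's argument is much cleaner and you should adopt it. First reduce: if $j_0$ is the \emph{last} index with $i_{j_0}>1$, it suffices to show the tail $[x^{(i_{j_0})},y]\cdots[x^{(i_n)},y].v=0$, so one may assume $i_1>1$ and $i_k\in\{0,1\}$ for $k\ge 2$. Then prove by induction on $k$ that
\[
[x^{(i_1)},y]\cdots[x^{(i_n)},y]\ \in\ \Bigl(\sum_{\substack{i,j>0\\ i\beta-j\alpha\in\Phi^+}} U(\frg)\,\frg_{i\beta-j\alpha}\Bigr)\cdot[x^{(i_k)},y]\cdots[x^{(i_n)},y].
\]
The base case $k=2$ is the observation (which you have) that $[x^{(i_1)},y]\in\frg_{(i_1-1)\beta-\alpha}$ with $i_1-1\ge 1$, a positive root space. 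For the inductive step, write $z\cdot[x^{(i_k)},y]=[z,[x^{(i_k)},y]]+[x^{(i_k)},y]\cdot z$ with $z\in\frg_{i\beta-j\alpha}$; since $i_k\in\{0,1\}$ we have $[x^{(i_k)},y]\in\frg_{-\gamma}$ or $\frg_{-\alpha}$, and parts (a), (b) say precisely that $[z,[x^{(i_k)},y]]$ is again in a positive root space of the required shape (or zero). At $k=n+1$ the positive root vector hits $v$ and $\fru.v=0$ finishes. This is a single induction, and (a), (b) are exactly the commutation lemmas that make it run --- that is the link you were missing.
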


\noindent \Pf Assertions (i) and (ii) follow from the fact that $\beta$ is not a multiple of $\alpha$ (the root system $\Phi$ is reduced), and $\beta$ must then contain a simple root $\alpha' \neq \alpha$, i.e. $\beta - \alpha' \in \sum_{\tau \in \Delta} \bbZ_{\ge 0} \tau$.

\vskip8pt

(iii) We may clearly assume that $i_1 > 1$ and that all $i_j \in \{0,1\}$ for $j>1$.  We will show by induction on $n$ that $[x^{(i_1)},y]\cdot \ldots \cdot [x^{(i_n)},y]$ is contained in

\vspace{-0.3cm}
$$\sum_{
\begin{array}{c}
i>0, j>0 \\
i\beta - j\alpha \in \Phi^+
\end{array}} U(\frg)\frg_{i\beta-j\alpha}$$


\noindent To begin with, notice that $[x^{(i_1)},y]$ is in $\frg_{i_1\beta-\gamma} = \frg_{(i_1-1)\beta-\alpha}$, and as $i_1-1>0$ this space is either zero or a root space $\frg_\chi$ with $\chi \in \Phi^+$. The assertion is hence true for $n = 1$. Now let $n>1$, $i_n \in \{0,1\}$, and fix an element $z \in \frg_{i\beta-j\alpha}$ with $i>0$, $j >0$, and $i\beta - j\alpha \in \Phi^+$. Consider the product $z[x^{(i_n)},y]$.

\vskip8pt

(a) If $i_n = 0$ then $[x^{(i_n)},y] = y \in \frg_{-\gamma}$ and $z[x^{(i_n)},y] = zy = [z,y] + yz$. If $i\beta-j\alpha - \gamma  = (i-1)\beta - (j+1)\alpha$ is a positive root, we are done, because then $[z,y]$ is in $\frg_{i'\alpha-j'\beta}$ with $i'\alpha-j'\beta \in \Phi^+$. Otherwise, by (i), $i\beta-j\alpha - \gamma$ is not in $\Phi \cup \{0\}$, and then $[z,y] = 0$.

\vskip8pt

(b) If $i_n = 1$ then $[x^{(i_n)},y] = [x,y] \in \frg_{-\alpha}$ and $z[x^{(i_n)},y] = z[x,y] = [z,[x,y]] + [x,y]z$. If $i\beta-j\alpha - \alpha  = i\beta - (j+1)\alpha$ is a positive root, we are done, because then $[z,[x,y]]$ is in $\frg_{i'\alpha-j'\beta}$ with $i'\alpha-j'\beta \in \Phi^+$. Otherwise, by (ii), $i\beta-j\alpha - \alpha$ is not in $\Phi \cup \{0\}$, and then $[z,[x,y]] = 0$. \qed

\begin{para}\label{lexorder} In the remainder of this section we will use the following lexicographic ordering on \linebreak
$\bbZ\alpha_1 \oplus \ldots \oplus \bbZ\alpha_\ell \;$:

$$\sum_{i=1}^\ell n_i \alpha_i > \sum_{i=1}^\ell n_i' \alpha_i \hskip10pt \iff \hskip10pt \exists k \ge 1: n_i = n_i' \mbox{ for } 1 \le i \le k-1 \mbox{ and } n_k > n_k' \;.$$

\vskip8pt
\end{para}

\begin{prop}\label{notlocnilp} Let $\frp = \frp_I$ for some $I \sub \Delta$. Suppose $M \in \cO^\frp$ is a highest weight module with highest weight $\lambda$ and

\vspace{-0.3cm}
$$I = \{\alpha \in \Delta \midc \langle \lambda, \alpha^\vee \rangle \in \bbZ_{\ge 0} \} \;.$$


\noindent Then no non-zero element of $\fru_\frp^-$ acts locally finitely on $M$.
\end{prop}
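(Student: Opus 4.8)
The plan is to argue by contradiction: suppose some non-zero $x \in \fru_\frp^-$ acts locally finitely on the highest weight module $M = M(\lambda)$ (or a quotient), and derive a contradiction with the maximality of $I$. First I would reduce to a root vector. Writing $x = \sum_{\beta} x_\beta$ as a sum of root vectors indexed by $\beta \in \Phi^+ \setminus \Phi_I^+$, I would pick the root $\gamma$ occurring in the support of $x$ which is \emph{extremal}, in the sense of being a minimal generator of the cone spanned by this support with respect to a suitable lexicographic ordering on $\bbQ\alpha_1 \oplus \ldots \oplus \bbQ\alpha_\ell$ (the same device used in Step 1 of the proof of Theorem \ref{irredI}). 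Applying $x^n$ to the highest weight vector $v^+$ and extracting the weight component of weight $\lambda - n\gamma$, one sees (by the same argument as in the proof of Theorem \ref{irredI}, using that no positive multiple of $\gamma$ is a non-negative integral combination of the other roots in the support) that this component is a non-zero multiple of $y_\gamma^n \cdot v^+$, where $y_\gamma \in \frg_{-\gamma}$. Hence it suffices to show $y_\gamma$ does \emph{not} act locally finitely on $M$, i.e. the vectors $y_\gamma^n \cdot v^+$, $n \ge 0$, span an infinite-dimensional subspace.

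The heart of the matter is therefore: for $\gamma \in \Phi^+ \setminus \Phi_I^+$, the element $y_\gamma$ acts without any non-trivial relation $y_\gamma^n \cdot v^+ \in \sum_{k<n} K\, y_\gamma^k \cdot v^+$; equivalently, $y_\gamma^n \cdot v^+ \neq 0$ for all $n$ (since these are weight vectors of distinct weights $\lambda - n\gamma$, non-vanishing already gives linear independence). Write $\gamma = \alpha + \beta$ with $\alpha \in \Delta$, $\beta \in \Phi^+$; because $\gamma \notin \Phi_I^+$ we may arrange $\alpha \notin I$, so $\langle \lambda, \alpha^\vee\rangle \notin \bbZ_{\ge 0}$. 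The strategy — following the outline already announced before Lemma \ref{generallemma} and running through Lemmas \ref{generallemma}, \ref{lemma1}, \ref{lemma2} — is to produce an element of $\frg$ that lowers $y_\gamma^n \cdot v^+$ back towards $v^+$ in a controlled way. Concretely, take $x_\beta \in \frg_\beta$ and $y = y_\gamma \in \frg_{-\gamma}$. Since $v^+$ is annihilated by $\fru$, Lemma \ref{lemma2}(c) tells us that in the expansion of $x_\beta^n \cdot y_\gamma^n \cdot v^+$ given by Lemma \ref{generallemma}, every term with some exponent $i_j > 1$ dies; combined with Lemma \ref{lemma1}(b)-type bookkeeping, $x_\beta^n \cdot y_\gamma^n \cdot v^+$ equals (up to a factorial) $[x_\beta, y_\gamma]^n \cdot v^+$ — and $[x_\beta, y_\gamma] \in \frg_{-\alpha}$, with $[x_\beta,y_\gamma] \neq 0$ by reducedness of $\Phi$ and choice of Chevalley basis. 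Iterating, one reduces the question of $y_\gamma^n \cdot v^+ \neq 0$ to the question of $y_\alpha^n \cdot v^+ \neq 0$ for the \emph{simple} root $\alpha \notin I$, i.e. to the classical $\mathfrak{sl}_2$ statement that $y_\alpha$ acts injectively on a highest weight module for $\alpha$ precisely when $\langle \lambda, \alpha^\vee \rangle \notin \bbZ_{\ge 0}$ — which holds by the definition of $I$.

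The main obstacle is the inductive descent $\gamma \rightsquigarrow \alpha$: one must be careful that applying $x_\beta$ repeatedly does not introduce spurious cancellation, which is exactly what Lemma \ref{lemma2} is designed to control (the root-combinatorial statements (a),(b) guarantee that all intermediate commutators land in root spaces $\frg_{i'\beta - j'\alpha}$ with $i'\beta - j'\alpha \in \Phi^+$, so they annihilate $v^+$, leaving only the "diagonal" contribution). A secondary subtlety is keeping track of the non-vanishing of the scalar multiples that accumulate — these are products of integers $\binom{k}{i_1 \ldots}$ and structure constants, and here is where the hypotheses on the residue characteristic $p$ enter, to ensure these integers are units in $K$; in the $ADE$-case the structure constants are all $\pm 1$ so no restriction is needed, consistent with the footnote. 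Once $y_\gamma^n \cdot v^+ \neq 0$ for all $n$ is established, distinctness of weights finishes the proof that $y_\gamma$, hence the original $x$, does not act locally finitely.
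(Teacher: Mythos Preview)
Your reduction to a root vector and the overall strategy (apply a suitable $x_{\gamma'}^n$ to $y_\gamma^n\cdot v^+$ and reduce to a root of smaller height, eventually reaching the $\mathfrak{sl}_2$ base case) match the paper's proof. However, there is a genuine gap in your inductive step: the claim ``because $\gamma \notin \Phi_I^+$ we may arrange $\alpha \notin I$'' is false. For a concrete counterexample take $\Phi$ of type $A_3$ with simple roots $\alpha_1,\alpha_2,\alpha_3$, let $I = \{\alpha_1,\alpha_3\}$, and $\gamma = \alpha_1+\alpha_2+\alpha_3$. The only decompositions $\gamma = \alpha + \beta$ with $\alpha$ simple and $\beta \in \Phi^+$ are $\alpha = \alpha_1, \beta = \alpha_2+\alpha_3$ or $\alpha = \alpha_3, \beta = \alpha_1+\alpha_2$; in both cases $\alpha \in I$, since $\alpha_1+\alpha_3$ is not a root. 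So your one-step reduction to $y_\alpha^n\cdot v^+$ with $\alpha \notin I$ does not go through.

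The paper fixes this by a genuine induction on $ht(\gamma)$: one writes $\gamma = \alpha + \beta$ and observes merely that not both of $\alpha,\beta$ can lie in $\Phi_I$. If $\alpha \notin I$ one multiplies by $x_\beta^n$ (exactly as you do, via Lemma~\ref{lemma2}(c)) and reduces to $y_\alpha^n\cdot v^+$. If instead $\alpha \in I$ and hence $\beta \notin \Phi_I$, one multiplies by a power of $x_\alpha$ to reduce to $y_{\gamma'}^n\cdot v^+$ for some $\gamma' \in \Phi^+\setminus\Phi_I$ with $ht(\gamma') < ht(\gamma)$; here Lemma~\ref{lemma2}(c) does not apply (it is stated for $x\in\frg_\beta$), so one must distinguish whether $\beta-\alpha$ is a root: if not, Lemma~\ref{lemma1}(b) applies directly; if so, one uses $x_\alpha^{nk_0}$ and the fact that $\ad(x_\alpha)^i(y_\gamma)=0$ for $i>k_0$, where $k_0$ is the length of the $\alpha$-string through $\gamma$. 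A minor side remark: no hypothesis on the residue characteristic is needed for this proposition (the factorials are non-zero in $K$); those hypotheses enter only later in Lemma~\ref{estimate} and Proposition~\ref{both_conditions}, where one needs the structure constants to be \emph{units} in $O_K$.
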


\noindent \Pf Let $v^+$ be a weight vector with weight $\lambda$. Let $y \in \fru^-_\frp$ be a non-zero element which acts locally finitely on $M$. Write $y = \sum_{\gamma \in \Phi^+ \setminus \Phi^+_I} y_\gamma$ with elements $y_\gamma \in \frg_{-\gamma}$. Put $B = \{\gamma \in \Phi^+ \setminus \Phi^+_I \midc y_\gamma \neq 0 \}$ (this set is non-empty) and choose $\gamma^+ \in B$ to be maximal among the elements in $B$ for the lexicographic ordering \ref{lexorder}. Write

\vspace{-0.3cm}
$$y^n.v^+ = \sum_{1 \le i_1, \ldots, i_n \le \tau} y_{\gamma_{i_1}}\cdot \ldots \cdot y_{\gamma_{i_n}}.v^+ \,.$$


\noindent where $\Phi^+ \setminus \Phi^+_I = \{\gamma_1, \ldots, \gamma_\tau\}$. Using the total ordering \ref{lexorder} it is easily seen that among the elements $y_{\gamma_{i_1}}\cdot \ldots y_{\gamma_{i_n}}.v^+$ only $y_{\gamma^+}^n.v^+$ can have weight $\lambda-n\gamma^+$. But as all $y^i.v^+$, $i \ge 0$, are contained in a finite-dimensional subspace, we must have $y_{\gamma^+}^n.v^+ = 0$ for some $n \in \bbZ_{>0}$. It follows from Lemma \ref{lemma1} that $y_{\gamma^+}$ acts then locally finitely on $M$.


We can therefore assume that $y = y_\gamma \in \frg_{-\gamma}\setminus \{0\}$ is contained in a root space, where $\gamma \in \Phi^+ \setminus \Phi_I$. Write $\gamma = \sum_{\alpha \in \Delta} c_\alpha \alpha$ (with non-negative integers $c_\alpha$). We show by induction on the height of $\gamma$, $ht(\gamma) = \sum_{\alpha \in \Delta} c_\alpha$, that $y_\gamma$ can not act locally finitely. By Lemma \ref{lemma1} this is equivalent to the statement that $y_\gamma^n.v^+ \neq 0$ for all positive integers $n$. (Note that the vectors $y_\gamma^n \cdot v^+$, if non-zero, have pairwise distinct weights $\lambda - n \cdot \gamma$, hence are linearly independent.) $y_\gamma^n.v^+$. If $ht(\gamma) = 1$, then $\gamma$ is an element of $\Delta \setminus I$. Rescaling $y_\gamma$ we can choose $x_\gamma \in \frg_\gamma$ such that $[x_\gamma,y_\gamma] = h_\gamma$ and $[h_\gamma,x_\gamma] = 2x_\gamma$ and $[h_\gamma,y_\gamma] = -2y_\gamma$. A well-known formula (which is easy to prove by in
 duction) gives

\vspace{-0.3cm}
\begin{numequation}\label{heightone}
x_\gamma^ny_\gamma^n.v^+ = n!\prod_{i=0}^{n-1}(\lambda(h_\gamma) - i).v^+ = n!\prod_{i=0}^{n-1}(\langle \lambda, \gamma^\vee \rangle - i).v^+ \,.
\end{numequation}


\noindent As $I = \{\alpha \in \Delta \midc \langle \lambda, \alpha^\vee \rangle \in \bbZ_{\ge 0} \}$, it follows that $\langle \lambda, \gamma^\vee \rangle \notin \bbZ_{\ge 0}$ and the term on the right of \ref{heightone} does not vanish. In particular, $y_\gamma^n.v^+ \neq 0$ for all $n \ge 0$.


Now suppose $ht(\gamma)>1$. Then we can write $\gamma = \alpha + \beta$ with $\alpha \in \Delta$ and $\beta \in \Phi^+$. Clearly, not both $\alpha$ and $\beta$ can be contained in $\Phi_I$. We distinguish two cases.


(a) Suppose $\beta - \alpha$ is not in $\Phi$. As $\beta \neq \alpha$ (the root system $\Phi$ is reduced) we have $[\frg_\alpha,\frg_{-\beta}] = [\frg_{-\alpha},\frg_\beta] = \{0\}$. Then, if $\alpha \notin I$, we let $x_\beta$ be a non-zero element of $\frg_\beta$ and have by Lemma \ref{lemma1}:

\vspace{-0.3cm}
$$x_\beta^ny_\gamma^n.v^+ = n![x_\beta,y_\gamma]^n.v^+ \,.$$


\noindent But as $[x_\beta,y_\gamma]$ is a non-zero element of $\frg_{-\alpha}$ we can conclude by induction that $[x_\beta,y_\gamma]^n.v^+ \neq 0$ for all $n \ge 0$. And thus $y_\gamma^n.v^+ \neq 0$ for all $n \ge 0$.

 If, on the other hand, $\alpha \in I$, then $\beta \notin \Phi_I$. Let $x_\alpha$ be a non-zero element of $\frg_\alpha$. Then we have by Lemma \ref{lemma1}:

 \vspace{-0.3cm}
$$x_\alpha^ny_\gamma^n.v^+ = n![x_\alpha,y_\gamma]^n.v^+ \,.$$


\noindent And as $[x_\alpha,y_\gamma]$ is a non-zero element of $\frg_{-\beta}$ we can again conclude by induction.


(b) Suppose $\beta - \alpha$ is in $\Phi$. Then it must be in $\Phi^+$, and we have $\gamma - k\alpha \in \Phi^+$ for $0 \le k \le k_0$ (with $k_0 \le 3$, cf. \cite[0.2]{H1}), and $\gamma - k\alpha \notin \Phi \cup \{0\}$ for $k > k_0$. This implies $[x_\alpha^{(i)},y_\gamma] = 0$ for $i>k_0$. By Lemma \ref{generallemma} we have

$$x_\alpha^{nk_0}y_\gamma^n.v^+ = \sum_{
i_1 + \ldots + i_{n+1} = nk_0} {nk_0 \choose i_1  \ldots  i_n \, i_{n+1}} [x_\alpha^{(i_1)},y_\gamma]\cdot \ldots \cdot [x_\alpha^{(i_n)},y_\gamma]x_\alpha^{i_{n+1}}.v^+ $$


\noindent and as $x_\alpha$ annihilates $v^+$ this reduces to

$$\sum_{i_1 + \ldots + i_n = nk_0} {nk_0 \choose i_1  \ldots  i_n} [x_\alpha^{(i_1)},y_\gamma]\cdot \ldots \cdot [x_\alpha^{(i_n)},y_\gamma].v^+
$$


\noindent By what we have just observed, the corresponding term vanishes if there is one $i_j>k_0$. Therefore, only the term with all $i_j = k_0$ contributes, and this sum is hence equal to

$${nk_0 \choose k_0  \ldots  k_0} [x_\alpha^{(k_0)},y_\gamma]^n.v^+ = \frac{(nk_0)!}{(k_0!)^n} \; [x_\alpha^{(k_0)},y_\gamma]^n.v^+$$

\vskip8pt

\noindent If $\gamma - k_0\alpha$ is not in $\Phi_I$ we are done, because $[x_\alpha^{(k_0)},y_\gamma]$ is a non-zero element of $\frg_{-(\gamma-k_0\alpha)}$. Otherwise we necessarily have $\alpha \notin I$. In this case, if we choose some $x_\beta \in \frg_\beta \setminus \{0\}$, we have by Lemma \ref{generallemma} and Lemma \ref{lemma2} (iii)

\vspace{-0.3cm}
$$x_\beta^ny_\gamma^n.v^+ = n![x_\beta,y_\gamma]^n.v^+ \,,$$


\noindent and $[x_\beta,y_\gamma]$ is a non-zero element of $\frg_{-\alpha}$. And, as we are now in the case of height one, we can thus conclude again. \qed

\begin{cor}\label{injective} Let $\frp = \frp_I$ for some $I \sub \Delta$. Suppose $M \in \cO^\frp$ is a simple module of highest weight $\lambda$ and

\vspace{-0.3cm}
$$I = \{\alpha \in \Delta \midc \langle \lambda, \alpha^\vee \rangle \in \bbZ_{\ge 0} \} \;.$$


\noindent Then the action of any non-zero element of $\fru_\frp^-$ on $M$ is injective.
\end{cor}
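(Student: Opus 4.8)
\noindent\emph{Idea of proof.} The plan is to bootstrap from Proposition \ref{notlocnilp}. A simple object of $\cO^\frp$ is in particular a highest weight module with highest weight $\lambda$, so that Proposition holds: no non-zero element of $\fru_\frp^-$ acts locally finite on $M$. For a \emph{simple} module this already forces injectivity, via a standard torsion-submodule argument, and the only point where the hypothesis $y\in\fru_\frp^-$ is really used is in checking that this torsion subspace is a submodule.

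First I would record the elementary fact that for every $y\in\fru_\frp^-$ the operator $\ad(y)$ is nilpotent on $\frg$, hence on $U(\frg)$. Indeed, grade $\frg$ by ``$I$-height'': write $\frg=\bigoplus_{k\in\bbZ}\frg_k$, where $\frg_k$ is the sum of the root spaces $\frg_\chi$ with $\chi=\sum_{\alpha\in\Delta}n_\alpha\alpha$ satisfying $\sum_{\alpha\notin I}n_\alpha=k$, together with $\frt\subset\frg_0$. Then $\frl_\frp=\frg_0$, $\fru_\frp=\bigoplus_{k>0}\frg_k$ and $\fru_\frp^-=\bigoplus_{k<0}\frg_k$, and this is a Lie algebra grading with only finitely many non-zero pieces. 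Hence $\ad(y)$ for $y\in\fru_\frp^-$ maps $\frg_j$ into $\bigoplus_{k<j}\frg_k$, so it strictly lowers the (bounded) set of degrees and is nilpotent, say $(\ad y)^{d+1}=0$ on $U(\frg)$.

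Next, for a fixed non-zero $y\in\fru_\frp^-$, consider the $y$-torsion subspace
$$M^{(y)}=\{\,m\in M\midc y^n m=0 \mbox{ for some } n\ge 1\,\}\,.$$
It is visibly a $K$-subspace, and I claim it is a $U(\frg)$-submodule. Using the identity $y^N u=\sum_{i=0}^N\binom{N}{i}(\ad y)^i(u)\,y^{N-i}$ in $U(\frg)$ (an immediate induction on $N$), given $u\in U(\frg)$ and $m\in M^{(y)}$ with $y^n m=0$, choose $N\ge n+d$: each term with $i\le d$ annihilates $m$ because $y^{N-i}m=0$, and each term with $i>d$ vanishes because $(\ad y)^i(u)=0$. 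Hence $y^N(um)=0$ and $um\in M^{(y)}$.

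Finally, since $M$ is simple, $M^{(y)}$ equals $0$ or $M$. If $M^{(y)}=M$, then $y$ acts locally nilpotently, a fortiori locally finite, on $M$, contradicting Proposition \ref{notlocnilp}. Therefore $M^{(y)}=0$, and since $\ker(y|_M)\subseteq M^{(y)}$, the element $y$ acts injectively on $M$. I expect the only step needing genuine care to be the submodule property of $M^{(y)}$ (i.e. the nilpotence of $\ad y$ on $U(\frg)$, which is exactly where $y\in\fru_\frp^-$ is used); everything else is formal once Proposition \ref{notlocnilp} is available.
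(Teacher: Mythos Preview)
Your argument is correct in spirit and very close to the paper's, but one statement needs tightening. You assert $(\ad y)^{d+1}=0$ on all of $U(\frg)$; this is false, since the induced $\bbZ$-grading on $U(\frg)$ is unbounded below. What is true---and all your argument needs---is that $\ad(y)$ acts \emph{locally} nilpotently on $U(\frg)$: if $u$ is a sum of PBW-monomials of length $\le k$ and $(\ad y)^{d+1}=0$ on $\frg$, then the Leibniz formula gives $(\ad y)^{kd+1}(u)=0$. Replacing $d$ by this $u$-dependent bound $d_u$, your choice $N\ge n+d_u$ works verbatim and $M^{(y)}$ is indeed a submodule.

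The paper's proof follows the same outline but with a slightly different auxiliary submodule: instead of the $y$-torsion set $M^{(y)}$, it takes the a priori larger set $N=\{m\in M\mid y \mbox{ acts locally finite on } m\}$ and appeals to Lemma~\ref{lemma1}(a) (which holds for \emph{any} $y\in\frg$, with no ad-nilpotence needed) to see that $N$ is a $U(\frg)$-submodule containing $\ker(y)$. Simplicity together with Proposition~\ref{notlocnilp} then forces $N=0$, exactly as in your conclusion. Your route trades that general lemma for the more specific fact that $\ad(y)$ is locally nilpotent when $y\in\fru_\frp^-$; both arguments are short, and yours is self-contained once the imprecision above is corrected.
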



\noindent \Pf By Lemma \ref{lemma1} (i), the set $N$ of elements $v \in M$ on which a fixed element $\frx \in \fru_\frp^- \setminus \{0\}$ acts locally finitely is a $U(\frg)$-submodule. Clearly $N$ contains $\ker(M \stackrel{\frx \, \cdot}{\lra} M)$. Because $M$ is assumed to be simple, and as $\frx$ is not acting locally finitely on $M$ by Prop. \ref{notlocnilp}, this submodule must be the zero module. \qed


\begin{cor}\label{locfinitesubalgebra} Let $M \in \cO$ be a highest weight module. Then the set of elements in $\frg$ which act locally finitely on $M$ is a standard parabolic Lie subalgebra of $\frg$. If $M$ has highest weight $\lambda$, then this standard parabolic subalgebra is $\frp_I$ where

\vspace{-0.3cm}
$$I = \{\alpha \in \Delta \midc \langle \lambda, \alpha^\vee \rangle \in \bbZ_{\ge 0} \} \;.$$


\noindent $\frp_I$ is maximal for $M$ in the sense of \ref{maximal}.
\end{cor}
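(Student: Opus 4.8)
The plan is to establish the sharper claim in the last sentence and deduce the rest from it. Set $I=\{\alpha\in\Delta\mid\langle\lambda,\alpha^\vee\rangle\in\bbZ_{\ge0}\}$, $\frp=\frp_I$, and let $S\subseteq\frg$ be the set of elements acting locally finitely on $M$; the goal is $S=\frp$. For $\frp\subseteq S$, first note that $M\in\cO^{\frp}$: being simple, $M$ is a quotient of $M(\lambda)$ by a submodule containing all the singular vectors $y_\alpha^{\langle\lambda,\alpha^\vee\rangle+1}\cdot v^+$, $\alpha\in I$, hence a quotient of the parabolic Verma module $M_I(\lambda)$, cf. Example \ref{Example_Verma}. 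Granting $M\in\cO^{\frp}$, for each $m\in M$ the space $U(\frp)m=U(\fru_\frp)\bigl(U(\frl_\frp)m\bigr)$ is finite-dimensional by axioms (1) and (2) of $\cO^{\frp}$ together with the PBW theorem, so $K[x]m$ is finite-dimensional for every $x\in\frp$; thus $\frp\subseteq S$.

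For the opposite inclusion the subtlety is that local finiteness is not a linear condition, so one cannot simply subtract off the $\frp$-part of an element of $S$; instead I would use a grading to isolate its lowest-degree component. Give $\frg$ the $I$-grading $\frg=\bigoplus_{j\in\bbZ}\frg_j$, where, writing $\gamma=\sum_\alpha n_\alpha\alpha$ and $\operatorname{ht}_I(\gamma)=\sum_{\alpha\in\Delta\setminus I}n_\alpha$, one has $\frg_0=\frl_\frp$ and, for $j\neq0$, $\frg_j=\bigoplus_{\operatorname{ht}_I(\gamma)=j}\frg_\gamma$. This is a Lie algebra grading with $\frp=\bigoplus_{j\ge0}\frg_j$ and $\fru_\frp^-=\bigoplus_{j<0}\frg_j$, and $M$ inherits a compatible grading $M=\bigoplus_{j\le0}M_j$, with $M_j$ the sum of the weight spaces $M_\mu$ for which $\operatorname{ht}_I(\lambda-\mu)=-j$. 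The key claim is: if $x=\sum_{j\ge j_0}x_j$ with $x_j\in\frg_j$ and $x_{j_0}\neq0$ acts locally finitely on $M$, then so does $x_{j_0}$. Granting this, if $x\in S$ were not in $\frp$ it would have a nonzero $\fru_\frp^-$-component, so its lowest $I$-degree $j_0$ would be negative and $0\neq x_{j_0}\in\frg_{j_0}\subseteq\fru_\frp^-$ would act locally finitely on $M$, contradicting Proposition \ref{notlocnilp} (applicable since $M\in\cO^{\frp}$ and $I$ is maximal). Hence $S\subseteq\frp$, so $S=\frp$ is a standard parabolic subalgebra, as asserted.

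Finally, the claim itself, which is the only computation involved. Fix a homogeneous $m\in M_e$ and put $V=K[x]m$, a finite-dimensional subspace of $M$. Expanding $x^n=\bigl(\sum_{j\ge j_0}x_j\bigr)^n$ and applying it to $m$, every monomial $x_{j_1}\cdots x_{j_n}m$ lies in $M_{e+j_1+\cdots+j_n}$ with $e+j_1+\cdots+j_n\ge e+nj_0$, equality holding only when all $j_i=j_0$; so the homogeneous component of $x^nm$ of degree $e+nj_0$ is exactly $x_{j_0}^nm$. Since $x^nm\in V$ for all $n$ and $V$ is finite-dimensional, only finitely many degrees occur in $V$, and therefore all the $x_{j_0}^nm$ lie in the finite-dimensional subspace $\sum_d\operatorname{pr}_d(V)$, where $\operatorname{pr}_d:M\to M_d$ is the projection onto the degree-$d$ part. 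Thus $K[x_{j_0}]m$ is finite-dimensional, and since an arbitrary vector of $M$ is a finite sum of homogeneous ones, $x_{j_0}$ acts locally finitely on $M$, proving the claim. All the real work is contained in Proposition \ref{notlocnilp}, which has already been carried out; the rest of the argument is formal.
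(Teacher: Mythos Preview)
Your argument is correct and follows essentially the same line as the paper's: both directions rest on $M\in\cO^{\frp_I}$, and for $S\subseteq\frp_I$ both extract from a locally finite $z\notin\frp_I$ a nonzero element of $\fru_{\frp_I}^-$ that also acts locally finitely, contradicting Proposition~\ref{notlocnilp}. The only technical variation is in how that element is extracted: the paper works at the highest weight vector $v^+$ and uses the full weight--space decomposition together with a lexicographic order on the root lattice to isolate a single root vector $y_\beta$ with $y_\beta^n\cdot v^+=0$; you use the coarser $I$--height grading on $\frg$ and $M$ to isolate the entire lowest--degree component $x_{j_0}\in\fru_{\frp_I}^-$. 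These are interchangeable implementations of the same idea.

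One point worth noting: you explicitly add the hypothesis that $M$ is simple in order to conclude $M\in\cO^{\frp_I}$, even though the corollary as stated allows an arbitrary highest weight module. You are right to be cautious --- for a general highest weight module the inclusion $\frp_I\subseteq S$ can fail (take $M=M(\lambda)$ with $\lambda$ dominant integral: then $I=\Delta$, $\frp_I=\frg$, yet no $y_\alpha$ acts locally finitely). The paper's own proof carries the same implicit restriction through its appeal to \cite[sec.~9.4]{H1}, which concerns $L(\lambda)$, and in every application of this corollary within the paper $M$ is indeed simple.
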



\noindent \Pf Let $v^+$ be a weight vector of weight $\lambda$ and define $I$ as above. Then $\lambda$ is in

\vspace{-0.3cm}
$$\Lambda_I^+ = \{\mu \in \frt^* \midc \forall \alpha \in I: \langle \mu , \alpha^\vee \rangle \in \bbZ_{\ge 0} \}$$


\noindent (cf. \cite[sec. 9.2]{H1} for the notation) and $M$ is in $\cO^\frp$ with $\frp = \frp_I$, cf. \cite[Thm. in sec. 9.4]{H1}. Suppose $z \in \frg \setminus \frp$ acts locally finitely on $M$. Then there is $n \in \bbZ_{>0}$ and $c_1, \ldots, c_n \in K$ such that

\vspace{-0.3cm}
\begin{numequation}\label{finiteness}
z^n.v^+ + c_1 z^{n-1}.v^+ + \ldots + c_{n-1}z.v^+ + c_n.v^+ = 0 \,.
\end{numequation}


\vspace{-0.3cm}
\noindent Write $z = x + \sum_{\gamma \in \Phi^+ \setminus \Phi_I} y_\gamma$ with elements $y_\gamma \in \frg_{-\gamma}$ and $x \in \frp$. As $z \notin \frp$ there is at least one $\gamma \in \Phi^+ \setminus \Phi_I$ with $y_\gamma \neq 0$. Put $B = \{\gamma \in \Phi^+ \setminus \Phi_I \midc y_\gamma \neq 0 \}$ (this set is non-empty) and choose $\beta \in B$ to be maximal among the elements in $B$ for the lexicographic ordering \ref{lexorder}. By expanding $z^n$ as a sum of products of $x$'s and $y_\gamma$'s it is then easily seen that only $y_\beta^n.v^+$ can have weight $\lambda-n\beta$. From equation \ref{finiteness} we deduce that $y_\beta^n.v^+ = 0$. This however contradicts Prop. \ref{notlocnilp}. \qed

\vskip8pt

\noindent {\it On certain relations in highest weight modules.} For the rest of this section we fix a Chevalley basis $(x_\beta,y_\beta,h_\alpha \midc \beta \in \Phi^+, \alpha \in \Delta)$ of $\frg' = [\frg,\frg]$, cf. \cite[Thm. in sec. 25.2]{H2}. Here we have $x_\beta \in \frg_\beta$ and $y_\beta \in \frg_{-\beta}$. We then let $\frg'_\bbZ$ be the $\bbZ$-span of these basis elements. $\frg'_\bbZ$ is a Lie algebra over $\bbZ$. For later use we quote from \cite[sec. 25]{H2} the following facts.

\vskip8pt

\begin{prop}\label{Chevalley} Let $\beta$, $\beta'$ be linearly independent roots. Let $z_\beta$ be $x_\beta$, if $\beta$ is positive, and let $z_\beta$ be $y_{-\beta}$, if $\beta$ is negative. Define $z_{\beta'}$ and $z_{\beta+\beta'}$ in the same manner, if $\beta + \beta'$ is a root.
Let $\beta'-r\beta, \beta'-(r-1)\beta, \ldots, \beta'+q\beta$ ($r \ge 0$, $q \ge 0$) be the $\beta$-string through $\beta'$. Then: \vskip5pt

(i) $r-q = \langle \beta, (\beta')^\vee \rangle$.

(ii) $[z_\beta,z_{\beta'}] = \pm(r+1)z_{\beta+\beta'}$ if $\beta + \beta'$ is a root.
\end{prop}

\noindent \Pf This is contained in \cite[Prop. in sec. 25.1, Thm. in sec. 25.2]{H2}. \qed

\begin{para}\label{intro} Let $M$ be a simple highest weight module in the category $\cO^\frp_\alg$, and assume that $\frp = \frp_I$ is maximal for $M$. Denote by $v^+$ a vector of highest weight $\lambda$. We will be studying relations

\vspace{-0.3cm}
$$y_\gamma^n.v^+ = \sum_{\nu \in \cI_n} c_\nu y_{\beta_1}^{\nu_1} \cdot \ldots \cdot y_{\beta_t}^{\nu_t}.v^+ \;,$$


\noindent where $\cI_n$ consists of all tuples $\nu = (\nu_1, \ldots, \nu_t) \in \bbZ_{\ge 0}^t$ satisfying $\nu_1\beta_1 + \ldots + \nu_t\beta_t = n\gamma$, and $c_\nu$ are coefficients in $K$.  As before, $\Phi^+ = \{\beta_1, \ldots, \beta_t\}$. Our aim is to show that there is at least one $\nu \in \cI_n$ having both of the following properties:

\vspace{-0.3cm}
$$\begin{array}{ll}
i. & \nu_1 + \ldots + \nu_t \ge n \,, \\
& \\
ii. & |c_\nu|_K \ge 1 \hskip4pt .
\end{array}$$


\noindent We start by showing the existence of some $\nu \in \cI_n$ with the second property.
\end{para}

\setcounter{enumi}{0}

\begin{lemma}\label{estimate} Let $M
\in \cO^\frp_\alg$ and $\frp = \frp_I$ be as above \ref{intro}. Suppose the residue characteristic of $K$ does not divide any of the non-zero numbers among $\langle \beta, \alpha^\vee\rangle$, $\alpha, \beta \in \Phi$, $\alpha \neq \pm \beta$. Let $\gamma \in \Phi^+ \setminus \Phi^+_I$. Denote by $\cI_n$ the set of all $\nu \in \bbZ_{\ge 0}^r$ such that $\nu_1\beta_1 + \ldots + \nu_r\beta_r = n\gamma$. Then, for any $n \in \bbZ_{\ge 0}$ and any expression

\vspace{-0.3cm}
\begin{numequation}\label{relation}
y_{\gamma}^n.v^+ = \sum_{\nu \in \cI_n} c_\nu y_1^{\nu_1} \cdot \ldots \cdot y_t^{\nu_t}.v^+ \hskip4pt ,
\end{numequation}


\noindent (where $y_i = y_{\beta_i}$, $i = 1, \ldots, t$) there is at least one index
$\nu \in \cI_n$ such that $|c_\nu|_K \ge 1$.
\end{lemma}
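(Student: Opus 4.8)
The proof is an induction on the height $ht(\gamma)$, refining the argument of Proposition \ref{notlocnilp} by keeping track of the $p$-adic absolute values of all coefficients that occur. Throughout we use that $y_\gamma^n.v^+ \neq 0$ for every $n$: otherwise $y_\gamma$ would act locally nilpotently on $v^+$, hence, by Lemma \ref{lemma1}(a), locally finite on $M = U(\frg).v^+$, contradicting Proposition \ref{notlocnilp} (recall $\gamma \in \Phi^+\setminus\Phi^+_I$ and $\frp = \frp_I$ is maximal for $M$). For the base case $ht(\gamma) = 1$ we have $\gamma \in \Delta\setminus I$; since $\Phi$ is reduced, the only way to write $n\gamma = \sum_i\nu_i\beta_i$ with $\nu_i \in \bbZ_{\ge 0}$ is $\nu = ne_{i_0}$, the tuple with $n$ in the position $i_0$ for which $\beta_{i_0} = \gamma$ and $0$ elsewhere. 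Thus $\cI_n$ is a single point, the relation (\ref{relation}) reads $y_\gamma^n.v^+ = c_\nu\,y_\gamma^n.v^+$, and $y_\gamma^n.v^+ \neq 0$ forces $c_\nu = 1$.

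\textbf{Induction step.} Suppose $ht(\gamma) > 1$ and write $\gamma = \alpha + \beta$ with $\alpha \in \Delta$ and $\beta \in \Phi^+$, so that not both of $\alpha,\beta$ lie in $\Phi_I$. Following the case distinction in the proof of Proposition \ref{notlocnilp} (whether or not $\beta - \alpha \in \Phi$, and which of $\alpha,\beta$ lies outside $I$) we pick a raising operator $E$ — a suitable power of $x_\beta$ or of $x_\alpha$ from the fixed Chevalley basis — with two properties. First, by Lemma \ref{generallemma} combined with Lemmas \ref{lemma1} and \ref{lemma2}, every term but one in $E\,y_\gamma^n.v^+$ vanishes against $v^+$, leaving $E\,y_\gamma^n.v^+ = c_0\,y_\delta^n.v^+$ for a root $\delta \in \Phi^+\setminus\Phi^+_I$ with $ht(\delta) < ht(\gamma)$ and a non-zero scalar $c_0 \in K$ which is a factorial-type integer — an $n!$, or a multinomial $\tfrac{(nk_0)!}{(k_0!)^n}$ as in (\ref{heightone}) and case ii of Proposition \ref{notlocnilp} — times a product of Chevalley structure constants; the hypothesis that the residue characteristic of $K$ divides no non-zero $\langle\beta,\alpha^\vee\rangle$ makes every such structure constant a unit of $O_K$, so $c_0 \neq 0$. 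Secondly — and this is the crucial point — applying $E$ to each monomial $y_1^{\nu_1}\cdots y_r^{\nu_r}.v^+$ ($\nu \in \cI_n$) and expanding in terms of the vectors $y^{\nu'}.v^+$ ($\nu'$ in the analogous index set $\cI_n^{(\delta)}$), which span the weight space of weight $\lambda - n\delta$, one can write $E\,y^\nu.v^+ = c_0\sum_{\nu'}b_{\nu,\nu'}\,y^{\nu'}.v^+$ with all $b_{\nu,\nu'} \in O_K$.

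\textbf{Conclusion of the induction.} Granting these two properties, apply $E$ to both sides of (\ref{relation}): the left side becomes $c_0\,y_\delta^n.v^+$ and the right side $c_0\sum_{\nu'}\big(\sum_\nu c_\nu b_{\nu,\nu'}\big)y^{\nu'}.v^+$. Dividing by $c_0 \neq 0$ gives a relation $y_\delta^n.v^+ = \sum_{\nu'}\big(\sum_\nu c_\nu b_{\nu,\nu'}\big)y^{\nu'}.v^+$ of the shape (\ref{relation}) for $\delta$, whose coefficients have absolute value $\le \max_\nu|c_\nu|_K$ by the ultrametric inequality and the fact that $b_{\nu,\nu'} \in O_K$. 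The inductive hypothesis, applied to $\delta$ (again in $\Phi^+\setminus\Phi^+_I$ and of smaller height), yields a coefficient of absolute value $\ge 1$; hence $\max_\nu|c_\nu|_K \ge 1$, which is the assertion.

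\textbf{Main obstacle.} The work lies entirely in the integrality statement $E\,y^\nu.v^+ \in c_0\cdot\big(\text{the }O_K\text{-span of the }y^{\nu'}.v^+\big)$ — the $p$-adic sharpening of the merely qualitative vanishing exploited in Proposition \ref{notlocnilp}. The plan is to prove it by unwinding the commutator expansion of Lemma \ref{generallemma} for $E\,y^\nu.v^+$, using Lemma \ref{lemma2} to discard the iterated commutators $[x^{(i)},\,\cdot\,]$ that annihilate $v^+$, and checking that any surviving term whose value is a non-zero multiple of $y_\delta^n.v^+$ carries a multinomial coefficient divisible (up to a unit) by $c_0$ — intuitively because reassembling $n$ copies of the root vector $y_\delta$ out of the monomial $y^\nu$ requires exactly as many ``divided-power steps'' as produced the factorial $c_0$ on the left-hand side. (A small rank-two computation confirms, for instance, that the coefficient extracted from $x_\beta^n\,y^\nu.v^+$ is a polynomial in the values $\langle\lambda,\alpha^\vee\rangle$ with integer coefficients all divisible by $n!$.) Once this bookkeeping is carried out in each of the cases of Proposition \ref{notlocnilp}, the induction closes as above.
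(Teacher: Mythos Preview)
Your overall strategy---induction on $ht(\gamma)$, hitting both sides of the relation with a suitable power of a raising operator from the Chevalley basis, and comparing the coefficient in front of $y_\delta^n.v^+$ with the coefficients on the right after rewriting---is exactly the paper's approach, including the case split borrowed from Proposition~\ref{notlocnilp}. The base case and the ``conclusion of the induction'' paragraph are fine.

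The gap is in what you call the ``main obstacle'': the integrality statement $E\,y^\nu.v^+ \in c_0\cdot O_K\{y^{\nu'}.v^+\}$. You describe this as bookkeeping to be carried out, but as written there is no mechanism given that forces the same factorial $c_0$ to appear on the right as on the left. The paper's proof supplies exactly the missing ingredient, and it is a single clean fact rather than a case-by-case verification: for every $\beta\in\Phi^+$ and every $i\ge 0$, the divided power $\tfrac{1}{i!}\ad(x_\beta)^i$ preserves the Chevalley $\bbZ$-form $\frg_\bbZ$ of $\frg$ (Chevalley, Kostant). Combined with the second formula in Lemma~\ref{generallemma}, this gives immediately that
\[
\ad(x_\beta)^N\big(y_1^{\nu_1}\cdots y_r^{\nu_r}\big)\ \in\ N!\,U(\frg_{O_K}),
\]
because each term in the expansion carries the multinomial $\binom{N}{i_1\cdots i_m}$ times $\prod_j i_j!\cdot\big(\tfrac{1}{i_j!}\ad(x_\beta)^{i_j}(y_{\bullet})\big)$, and $\binom{N}{i_1\cdots i_m}\prod_j i_j!=N!$. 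Since $x_\beta.v^+=0$ one has $x_\beta^N\cdot y^\nu.v^+=\ad(x_\beta)^N(y^\nu).v^+$, and after PBW-reordering inside $U(\frg_{O_K})$ and using that $h_\alpha.v^+=\lambda(h_\alpha)v^+\in O_K v^+$ (integrality of $\lambda$), one lands in $N!\cdot O_K\{y^{\tau}.v^+\}$. Taking $N=n$ (case i)) or $N=nk_0$ (case ii)) this is precisely your $c_0$ up to a product of structure constants that are units by hypothesis. Once you insert this Chevalley--Kostant input, your outline becomes a proof and coincides with the paper's.
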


\vskip12pt

\noindent \Pf We start by recalling that for any $i \ge 0$ and $\beta \in \Phi^+$ the endomorphism of $\frg$:

$$\frac{1}{i!}[x_\beta^{(i)}, \cdot ] = \frac{1}{i!}\ad(x_\beta)^i$$

\vskip8pt

\noindent preserves the $\bbZ$-form $\frg'_\bbZ$ of $\frg'$, cf. \cite[Prop. in sec. 25.5]{H2}. Denote by $U(\frg_\bbZ')$ the enveloping algebra of $\frg'_\bbZ$, i.e., the quotient of the tensor algebra $T_\bbZ(\frg'_\bbZ)$ by the two-sided ideal generated by all elements of the form $xy-yx-[x,y]$ with $x,y \in \frg'_\bbZ$. (We point out that $U(\frg'_\bbZ)$ is, of course, {\it not} the Kostant $\bbZ$-form of the enveloping algebra, as in \cite[sec. 26.4]{H2}.) It follows from \ref{generallemma} that $\frac{1}{i!}\ad(x_\beta)^i$ also preserves $U(\frg'_\bbZ)$.

The proof proceeds by induction on $ht(\gamma)$. Suppose $ht(\gamma) = 1$ and let $\beta_{i} = \gamma$. Then the set $\cI_n$ consists of a single element $\nu$ which is the $t$-tuple that has the entry $n$ in the $i^{\mbox{\tiny th}}$ place and zeros elsewhere. The right hand side of \ref{relation} is thus $c_\nu y_{\gamma}^n.v^+$. By Cor. \ref{injective} the element $y_{\gamma}$ acts injectively on $M$, and we therefore get $c_\nu = 1$.

\vskip8pt

Now we assume that $ht(\gamma) > 1$. Write $\gamma = \alpha + \beta$ with a simple root $\alpha \in \Delta$ and a positive root $\beta$.
Clearly, not both $\alpha$ and $\beta$ can be contained in $\Phi_I$. We distinguish two cases.

\vskip8pt

(a) Suppose $\beta - \alpha$ is not in $\Phi$. As $\beta \neq \alpha$ (the root system $\Phi$ is reduced) we 
have $[\frg_\alpha,\frg_{-\beta}] = [\frg_{-\alpha},\frg_\beta] = \{0\}$. Then, if $\alpha \notin I$, we consider $x_\beta$, the element 
of the Chevalley basis which generates $\frg_\beta$. We have by Lemma \ref{lemma1}:

\vspace{-0.3cm}
$$x_\beta^ny_\gamma^n.v^+ = n![x_\beta,y_\gamma]^n.v^+ \;.$$


\noindent Consider the $\beta$-string through $-\gamma$:  $-\gamma-r\beta, \ldots, -\gamma + q\beta$. Then  $-\alpha-(r+1)\beta, \ldots, -\alpha+(q-1)\beta$ is the $\beta$-string through $-\alpha$, and because we assume here that $-\alpha+\beta \notin \Phi$, we deduce that $q=1$. By \ref{Chevalley} we then conclude $r+1 = \langle -\alpha, \beta^\vee \rangle$ and $[x_\beta,y_\gamma] = \pm \langle \alpha, \beta^\vee \rangle y_\alpha$. Hence

\vspace{-0.3cm}
$$x_\beta^ny_\gamma^n.v^+ = n!( \pm \langle \alpha, \beta^\vee \rangle)^n  y_\alpha^n.v^+ \;.$$


\noindent As $\langle \alpha, \beta^\vee \rangle = -(r+1) \neq 0$ and because of our assumption on the residue characteristic of $K$, the integer $\langle \alpha, \beta^\vee \rangle$ is invertible in $O_K$.

On the other hand, equation \ref{relation} gives

\vspace{-0.3cm}
$$x_\beta^n.y_\gamma^n.v^+ = \sum_{\nu \in \cI_n} c_\nu x_\beta^n(y_1^{\nu_1} \cdot \ldots \cdot y_t^{\nu_t}).v^+ = \sum_{\nu \in \cI_n} c_\nu \ad(x_\beta)^n(y_1^{\nu_1} \cdot \ldots \cdot y_t^{\nu_t}).v^+ \; .$$


\noindent But as $\ad(x_\beta)^n(y_1^{\nu_1} \cdot \ldots \cdot y_t^{\nu_t})$ is in $n!U(\frg'_\bbZ)$, and because $h_\tau.v^+ = \langle \lambda, \tau^\vee \rangle v^+ \in \bbZ v^+$ (for all $\tau \in \Delta$), we find that $x_\beta^n.y_\gamma^n.v^+$
is of the form

\vspace{-0.3cm}
$$n! \sum_{\nu' \in \cI'_n} c'_{\nu'} y_1^{\nu_1'} \cdot \ldots \cdot y_t^{\nu_t'}.v^+ \; ,$$


\noindent where $\cI'_n$ consists of all $\nu' \in \bbZ_{\ge 0}^t$ such that $\nu_1'\beta_1 + \ldots + \nu_t'\beta_t = n\alpha = n(\gamma - \beta)$, and
the numbers $c'_{\nu'}$ are linear combinations of the $c_\nu$ with integral coefficients. We therefore get

\vspace{-0.3cm}
$$y_\alpha^n.v^+ = \frac{1}{(\pm \langle \alpha, \beta^\vee \rangle)^n} \sum_{\nu' \in \cI'_n} c'_{\nu'} y_1^{\nu_1'} \cdot \ldots \cdot y_t^{\nu_t'}.v^+ \; .$$


\noindent The induction hypothesis shows that at least one of the coefficients $c'_{\nu'}$ must be of absolute value at least $1$, and this implies that at least one of the coefficients $c_\nu$ is of absolute value at least $1$.

\vskip8pt

Now suppose $\alpha \in I$. Then $\beta \notin \Phi_I$ and we consider $x_\alpha$, the member of the Chevalley basis which generates $\frg_\alpha$. By Lemma \ref{lemma1}:

\vspace{-0.3cm}
$$x_\alpha^ny_\gamma^n.v^+ = n![x_\alpha,y_\gamma]^n.v^+ \;.$$


\noindent The same arguments as above give $[x_\alpha,y_\gamma] = \pm \langle \beta, \alpha^\vee \rangle y_\beta$, and $\langle \beta, \alpha^\vee \rangle$ is a unit in $O_K$. As before, we then multiply the right hand side of \ref{relation} with $x_\alpha^n$ and find that

$$y_\beta^n.v^+ = \frac{1}{(\pm \langle \beta, \alpha^\vee \rangle)^n} \sum_{\nu' \in \cI'_n} c'_{\nu'} y_1^{\nu_1'} \cdot \ldots \cdot y_t^{\nu_t'}.v^+ \; ,$$

\vskip8pt

\noindent where the numbers $c'_{\nu'}$ are linear combinations of the $c_\nu$ with integral coefficients. And we conclude again by induction.

\vskip8pt

(b) Suppose $\beta - \alpha$ is in $\Phi$. Then it must be in $\Phi^+$, and we have $\gamma - k\alpha \in \Phi^+$ for $0 \le k \le k_0$ (with $k_0 \le 3$, cf. \cite[0.2]{H1}), and $\gamma - k\alpha \notin \Phi \cup \{0\}$ for $k > k_0$. This implies $[x_\alpha^{(i)},y_\gamma] = 0$ for $i>k_0$. By Lemma \ref{generallemma} we have

\vspace{-0.3cm}
$$x_\alpha^{nk_0}y_\gamma^n.v^+ = \sum_{
i_1 + \ldots + i_{n+1} = nk_0} {nk_0 \choose i_1  \ldots  i_n \, i_{n+1}} [x_\alpha^{(i_1)},y_\gamma]\cdot \ldots \cdot [x_\alpha^{(i_n)},y_\gamma]x_\alpha^{i_{n+1}}.v^+ $$


\noindent and as $x$ annihilates $v^+$ this reduces to

\vspace{-0.3cm}
$$\sum_{i_1 + \ldots + i_n = nk_0} {nk_0 \choose i_1  \ldots  i_n} [x_\alpha^{(i_1)},y_\gamma]\cdot \ldots \cdot [x_\alpha^{(i_n)},y_\gamma].v^+
$$


\noindent By what we have just observed, the corresponding term vanishes if there is one $i_j>k_0$. Therefore, only the term with all $i_j = k_0$ contributes, and this sum is hence equal to

$${nk_0 \choose k_0  \ldots  k_0 } [x_\alpha^{(k_0)},y_\gamma]^n.v^+ = \frac{(nk_0)!}{(k_0!)^n} \; [x_\alpha^{(k_0)},y_\gamma]^n.v^+$$

\vskip8pt

\begin{sublemma} $[x_\alpha^{(k_0)},y_\gamma] = k_0! \cdot c \cdot y_{\gamma - k_0\alpha}$ with an integer $c$ which is a unit in $O_K$.
\end{sublemma}

\noindent \Pf Suppose $k_0 = 2$. Let $\gamma - 2\alpha, \ldots, \gamma +q\alpha$ be the $\alpha$-string through $\gamma$. 
As such a string consists of at most four roots (cf. \cite[0.2]{H1}) we have $q \le 1$. The $\alpha$-string through $-\gamma$ then 
begins with $-\gamma-q\alpha$. Assume first that $q=0$. By \ref{Chevalley} we have $[x_\alpha,y_\gamma] = \pm y_{\gamma-\alpha}$ and 
then $[x_\alpha,y_{\gamma-\alpha}] = \pm 2y_{\gamma-2\alpha}$, so indeed $[x_\alpha^{(2)},y_\gamma] = \pm 2! \cdot y_{\gamma - 2\alpha}$. 
If $q=1$ then there is a string of length four, and $\Phi$ must contain an irreducible component of type $G_2$. By \ref{hyp} we 
then assume that $3$ is invertible in $O_K$. Moreover, we have $[x_\alpha,y_\gamma] = \pm 2 y_{\gamma-\alpha}$ and 
then $[x_\alpha,y_{\gamma-\alpha}] = \pm 3y_{\gamma-2\alpha}$, so that indeed $[x_\alpha^{(2)},y_\gamma] = \pm 2! \cdot 3 \cdot y_{\gamma - 2\alpha}$ with $3 \in O_K^*$.

Suppose $k_0=3$. Then $\gamma - 3\alpha, \gamma-2\alpha, \gamma -\alpha, \gamma$ is the $\alpha$-string through $\gamma$ and $-\gamma, -\gamma+\alpha, -\gamma +2\alpha, -\gamma+3\alpha$ is the $\alpha$-string through $-\gamma$. Using \ref{Chevalley} we compute $[x_\alpha^{(3)},y_\gamma] = \pm 3! y_{\gamma-3\alpha}$. \qed

We conclude that

\vspace{-0.3cm}
\begin{numequation}\label{conclusion}
x_\alpha^{nk_0}y_\gamma^n.v^+ = (nk_0)! \cdot c^n \cdot (y_{\gamma - k_0 \alpha})^n . v^+ \hskip8pt \mbox{with} \hskip8pt c \in O_K^*\;.
\end{numequation}


\noindent If $\gamma - k_0\alpha$ is not in $\Phi_I$ we are done, by the same reasoning as in part (a). Otherwise we necessarily have $\alpha \notin I$. In this case we have by Lemma \ref{generallemma} and Lemma \ref{lemma2} (iii)

\vspace{-0.3cm}
$$x_\beta^ny_\gamma^n.v^+ = n![x_\beta,y_\gamma]^n.v^+ \,.$$


\noindent Let $-\gamma-r\beta, \ldots, -\gamma+q\beta$ be the  $\beta$-string through $-\gamma$.  
We have $q \ge 2$ and thus $r \le 1$. 
If $r=0$ then $[x_\beta,y_\gamma] = \pm y_\alpha$. If $r=1$ then $[x_\beta,y_\gamma] =\pm 2 y_{\alpha}$. 
But in this case the $\beta$-string through $-\gamma$ consists of four roots, and $\Phi$ must have an irreducible component of type $G_2$. 
By \ref{hyp} we have $p>3$, and thus $2 \in O_K^*$. Therefore, we always have $[x_\beta,y_\gamma] = c y_\alpha$ with $c \in O_K^*$. 
With the same arguments as in part (a) we can thus deduce the claim. \qed

\vskip12pt

\setcounter{enumi}{0}

\begin{lemma}\label{ABCD}
Suppose that none of the irreducible components of $\Phi$ is of type $G_2$. Let $\gamma \in \Phi^+$. Consider a relation
\begin{numequation}\label{relation2}
n \gamma = \nu_1\beta_1 + \ldots + \nu_t\beta_t
\end{numequation}
\noindent with non-negative integers $n, \nu_1, \ldots, \nu_t \in \bbZ_{\ge 0}$. Then $n \le \nu_1 + \ldots + \nu_t$.
\end{lemma}

\vskip5pt

\noindent \Pf We may assume that $\Phi$ is irreducible and that $n$ is positive (there is nothing to show when $n=0$). It is known that for irreducible reduced roots systems other than $G_2$ the square of the ratio between the lengths of any two roots $\beta, \gamma$ is among $\{\frac{1}{2},1,2\}$. Root systems of type $A$, $D$, $E$ (so-called simply laced root systems) have the property that its roots are all of equal length, whereas for root systems of type $B$, $C$, and $F_4$ the ratio can also be $\frac{1}{2}$ or $2$. The relations

\vspace{-0.3cm}
$$\langle \beta,\gamma \rangle = 2\frac{(\beta,\gamma)}{(\gamma,\gamma)} = 2 \frac{\|\beta\|}{\|\gamma\|} \cos(\theta) \hskip12pt \mbox{ and } \hskip12pt \langle \beta,\gamma \rangle \langle \gamma,\beta \rangle = 4 \cos(\theta)^2$$


\noindent show that if $\frac{\|\beta\|^2}{\|\gamma\|^2} \in \{\frac{1}{2},1,2\}$ then $\langle \beta,\gamma \rangle \le 2$, i.e., $(\beta,\gamma) \le (\gamma,\gamma)$, cf. \cite[9.4]{H2}. Taking the scalar product of both sides of \ref{relation2} with $\gamma$ and dividing by $(\gamma,\gamma)$ we get

\vspace{-0.3cm}
$$n = \nu_1\frac{(\beta_1,\gamma)}{(\gamma,\gamma)} + \ldots + \nu_t\frac{(\beta_t,\gamma)}{(\gamma,\gamma)} \le \nu_1 + \ldots + \nu_t \,.$$


\noindent And this is what we asserted. \qed

\vskip12pt

Now we generalize the preceding lemma so as to assure the existence of some $\nu \in \cI_n$ satisfying both conditions $i.$ and $ii.$ of \ref{intro}.

\setcounter{enumi}{0}

\begin{prop}\label{both_conditions} Let $M \in \cO^\frp_\alg$ and $\frp = \frp_I$ be as in \ref{intro}. Suppose the residue characteristic of $K$ does not divide any of the non-zero numbers among $\langle \beta, \alpha^\vee\rangle$, $\alpha, \beta \in \Phi$, $\alpha \neq \pm \beta$. Let $\gamma \in \Phi^+ \setminus \Phi^+_I$. Denote by $\cI_n$ the set of all $\nu \in \bbZ_{\ge 0}^t$ such that $\nu_1\beta_1 + \ldots + \nu_t\beta_t = n\gamma$. Then, for any $n \in \bbZ_{\ge 0}$ and any expression

\vspace{-0.3cm}
\begin{numequation}\label{relation3}
y_{\gamma}^n.v^+ = \sum_{\nu \in \cI_n} c_\nu y_1^{\nu_1} \cdot \ldots \cdot y_t^{\nu_t}.v^+ \hskip4pt ,
\end{numequation}


\noindent (where $y_i = y_{\beta_i}$, $i = 1, \ldots, t$) there is at least one index
$\nu \in \cI_n$ such that $\nu_1 + \ldots + \nu_t \ge n$ and $|c_\nu|_K \ge 1$.
\end{prop}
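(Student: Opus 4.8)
The statement combines two facts we have already proved separately: Lemma~\ref{estimate} (there is some $\nu\in\cI_n$ with $|c_\nu|_K\ge 1$) and Lemma~\ref{ABCD} (for $\Phi$ without $G_2$-components, \emph{every} $\nu$ satisfying a relation $n\gamma=\sum\nu_i\beta_i$ automatically has $\sum\nu_i\ge n$). The plan is to show that the proof of Lemma~\ref{estimate} can be \emph{refined} so that the index $\nu$ it produces also satisfies $\nu_1+\ldots+\nu_r\ge n$, without relying on the no-$G_2$ hypothesis of Lemma~\ref{ABCD}. In the simply-laced (and $B$, $C$, $F_4$) cases, Lemma~\ref{ABCD} gives condition $i.$ for free, so there the proposition is immediate from Lemma~\ref{estimate}. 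The real content is therefore the $G_2$-case, or more precisely: to arrange that the $\nu$ coming out of the induction in Lemma~\ref{estimate} is one for which condition $i.$ holds.

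First I would set up the same induction on $ht(\gamma)$ as in Lemma~\ref{estimate}, and carry along the \emph{extra} inductive bookkeeping that the chosen index satisfies $\sum_i \nu_i \ge n$. The base case $ht(\gamma)=1$ is trivial: $\cI_n=\{(0,\ldots,n,\ldots,0)\}$, $c_\nu=1$, and $\sum_i\nu_i=n$. For the inductive step, recall that in case i) of Lemma~\ref{estimate} one applies $\ad(x_\beta)^n$ (or $\ad(x_\alpha)^n$) to the relation \eqref{relation3}; the key point is that $\ad(x_\beta)^n\bigl(y_1^{\nu_1}\cdots y_r^{\nu_r}\bigr)$, expanded in the PBW basis, is a sum of monomials $y_1^{\tau_1}\cdots y_r^{\tau_r}$ with $\tau_1\beta_1+\ldots+\tau_r\beta_r=\nu_1\beta_1+\ldots+\nu_r\beta_r-n\beta=n\alpha$, and — crucially — with $\tau_1+\ldots+\tau_r=\nu_1+\ldots+\nu_r-n$ (each application of $\ad(x_\beta)$ either kills a $y_{\beta'}$, replacing $y_\beta y_{\beta'}$ by a commutator $y_{\beta'-\beta}$ and thus dropping the total $y$-degree by one, or commutes $x_\beta$ past $y_{\beta'}$; but since $x$ annihilates $v^+$ all surviving terms have used up all $n$ copies of $x_\beta$ as commutators, so the total $y$-degree drops by exactly $n$). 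Hence the coefficient count $\sum\tau_i = (\sum\nu_i)-n$ is exact. So if by the induction hypothesis the lower-height relation for $y_\alpha^n$ (resp.\ $y_\beta^n$) produces some $\tau$ with $|c'_\tau|_K\ge 1$ and $\sum_i\tau_i\ge n$, then tracing back through $x_\beta^n y_\gamma^n.v^+ = n!(-\langle\alpha,\beta^\vee\rangle)^n y_\alpha^n.v^+$ we find the corresponding $\nu\in\cI_n$ with $|c_\nu|_K\ge1$ has $\sum_i\nu_i = (\sum_i\tau_i)+n\ge 2n \ge n$. The same argument applies in case ii), where one applies $\ad(x_\alpha)^{nk_0}$ and the total $y$-degree drops by $n(k_0-1)$ or so; one checks the analogous degree identity.

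The main obstacle I anticipate is making the degree-tracking in case ii) (the case $\beta-\alpha\in\Phi$, $k_0\le 3$) genuinely airtight, because there one passes through $[x_\alpha^{(k_0)},y_\gamma]^n.v^+$ and must carefully track how the total $y$-degree changes under the iterated commutator $\ad(x_\alpha)^{k_0}$ applied to $y_\gamma = \pm[y_\alpha^{(k_0-1)},\ldots]$ — in other words, one needs the analogue of ``$\ad(x_\alpha)$ drops $y$-degree by one'' relative to the \emph{reduced} expression of $y_{\gamma-k_0\alpha}$ in terms of $y_\alpha$ and $y_\beta$. An alternative, cleaner route avoiding this entirely: simply invoke Lemma~\ref{ABCD} for all components \emph{except} $G_2$, and for an irreducible $G_2$-component handle the finitely many pairs $(\gamma, n)$ by hand — there are only two positive-root lengths and six positive roots, so the finitely many relations $n\gamma=\sum\nu_i\beta_i$ with $\sum\nu_i<n$ can be enumerated and one checks directly (using the $G_2$-analogue of the commutator computations, valid since we assume $p>3$) that for each such ``bad'' $\nu$ the corresponding coefficient $c_\nu$ is divisible by $p$, so that Lemma~\ref{estimate}'s index must be a ``good'' one. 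I would present the first (uniform induction) approach as the main proof and remark that the $G_2$-case can alternatively be dispatched by the finite check.
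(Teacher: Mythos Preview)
Your main inductive approach contains a genuine error. The claimed identity $\sum_i \tau_i = \sum_i \nu_i - n$ is false: applying $\ad(x_\beta)$ to $y_{\beta'}$ gives $[x_\beta, y_{\beta'}] \in \frg_{\beta - \beta'}$, and when $\beta' - \beta \in \Phi^+$ this is a scalar multiple of $y_{\beta'-\beta}$ --- still a \emph{single} negative root vector, so the total $y$-count is unchanged, not lowered by one. (Already in type $A_2$ with $\gamma = \alpha_1+\alpha_2$, $n=2$: one has $x_{\alpha_2}^2 y_\gamma^2.v^+ = 2c^2\, y_{\alpha_1}^2.v^+$, which has $y$-degree $2$, whereas your formula predicts $2-2=0$.) Without this identity there is no way to propagate the inequality $\sum_i\tau_i\ge n$ back up the induction. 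Even if such an identity held, the passage from $\nu$ to $\tau$ is many-to-many: each $c'_\tau$ is a $\bbZ$-linear combination of several $c_\nu$'s, so ``the corresponding $\nu\in\cI_n$'' is not well defined.

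The paper follows your alternative route in spirit, but with a different mechanism. For root systems without $G_2$-components, Lemma~\ref{ABCD} shows that \emph{every} $\nu\in\cI_n$ already satisfies $\sum_i\nu_i\ge n$, so Lemma~\ref{estimate} alone finishes. For a $G_2$-component one cannot argue that each bad $c_\nu$ has $|c_\nu|_K<1$ (the $c_\nu$ are arbitrary coefficients subject only to the relation, and there are infinitely many $n$, so no finite check on coefficients is possible). Instead, the paper chooses a suitable $\gamma'$ and $k_0$ and proves, by an explicit case-by-case weight and root-string analysis for each of the six positive roots $\gamma$ and each admissible $I\subsetneq\Delta$, that $x_{\gamma'}^{nk_0}\cdot y^\nu.v^+ = 0$ whenever $\sum_i\nu_i<n$. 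Thus after multiplying the relation by $x_{\gamma'}^{nk_0}$, only ``good'' $\nu$'s survive on the right-hand side; the left-hand side becomes a unit times $(nk_0)!\, y_{\gamma-k_0\gamma'}^n.v^+$, and Lemma~\ref{estimate} (for the root $\gamma-k_0\gamma'$ of smaller height) then yields some $\nu$ with $|c_\nu|_K\ge 1$ which is automatically good. What your proposal is missing is precisely this vanishing argument for the bad $\nu$'s; it is not a degree count but a direct verification using the $G_2$ root combinatorics.
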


\vskip5pt

\noindent \Pf If $\Phi$ does not have an irreducible component of type $G_2$ then \ref{estimate} and \ref{ABCD} prove the assertion of \ref{both_conditions}. Therefore we assume in the following that $\Phi$ is irreducible of type $G_2$.

The basic idea of the proof is as follows. There is nothing to show if $ht(\gamma) = 1$. Now suppose that $ht(\gamma) > 1$. Then there is $\gamma' \in \Phi^+$ and $k_0 \in \bbZ_{>0}$ such that $\gamma - k_0\gamma' \in \Phi^+ \setminus \Phi^+_I$ and

$$\frac{1}{(nk_0)!} x_{\gamma'}^{nk_0} \cdot y_\gamma^n.v^+ = \frac{1}{(k_0!)^n} \cdot [x_{\gamma'}^{(k_0)}, y_\gamma]^n.v^+ = c \cdot (y_{\gamma-k_0\gamma'})^n.v^+ \;,$$

\vskip8pt

\noindent with $c \in O_K^*$, cf. \ref{conclusion}. Considering the right hand side of \ref{relation3}, we aim to show that for any $\nu \in \cI_n$ with $\nu_1 + \ldots + \nu_t < n$ the term

\vspace{-0.3cm}
\begin{numequation}\label{vanishing_term}
x_{\gamma'}^{nk_0} \cdot y_1^{\nu_1} \cdot \ldots \cdot y_t^{\nu_t}.v^+
\end{numequation}

\vspace{-0.3cm}
\noindent in
\begin{numequation}\label{modified_sum}
\frac{1}{(nk_0)!} x_{\gamma'}^{nk_0} \cdot \left(\sum_{\nu \in \cI_n} c_\nu y_1^{\nu_1} \cdot \ldots \cdot y_t^{\nu_t}.v^+\right) = \sum_{\nu \in \cI_n} c_\nu \cdot \frac{1}{(nk_0)!} x_{\gamma'}^{nk_0} \cdot y_1^{\nu_1} \cdot \ldots \cdot y_t^{\nu_t}.v^+
\end{numequation}


\noindent vanishes. This means that the sum on the right hand side of \ref{modified_sum} is equal to

\vspace{-0.3cm}
\begin{numequation}\label{modified_sum2}
\sum_{\nu \in \cJ_n} c_\nu \cdot \frac{1}{(nk_0)!}x_{\gamma'}^{nk_0} \cdot y_1^{\nu_1} \cdot \ldots \cdot y_t^{\nu_t}.v^+ \;,
\end{numequation}


\noindent where $\cJ_n \sub \cI_n$ consists only of those $\nu \in \cI_n$ for which $\nu_1 + \ldots + \nu_t \ge n$. We can then rewrite \ref{modified_sum2} as

\vspace{-0.3cm}
$$\sum_{\nu' \in \cI'_n} c'_{\nu'} \cdot y_1^{\nu_1'} \cdot \ldots \cdot y_t^{\nu_t'}.v^+ \;,$$


\noindent where $\cI'_n$ consists of all $\nu' \in \bbZ_{\ge 0}^t$ such that $\nu_1'\beta_1 + \ldots + \nu_t' \nu_t' = n(\gamma - k_0\gamma')$, and the numbers $c'_{\nu'}$ are linear combinations with integral coefficients of the numbers $c_\nu$, with $\nu \in \cJ_n$. (Recall that the operator $\frac{1}{(nk_0)!}{\rm ad}(x_{\gamma'})^{nk_0}$ preserves $\frg_\bbZ'$.) Applying Lemma \ref{estimate}, we find that there is $\nu' \in \cI_n'$ such that $|c'_{\nu'}|_K \ge 1$. Hence there is at least one $\nu \in \cJ_n$ such that $|c_\nu|_K \ge 1$. Thus proving our assertion.

\vskip8pt

\noindent We need to show that the term \ref{vanishing_term} actually vanishes if $\nu_1 + \ldots + \nu_t < n$.

Recall that $\Phi$ is of type $G_2$ here. Let $\Delta = \{\alpha,\beta\}$ with $\alpha$ being the short and $\beta$ being the long root. We put

\vspace{-0.3cm}
$$\beta_1 = \alpha \,, \hskip8pt \beta_2 = \beta \,, \hskip8pt \beta_3 = \alpha+ \beta \,, \hskip8pt \beta_4 = 2\alpha + \beta \,, \hskip8pt \beta_5 = 3\alpha + \beta \,, \hskip8pt \beta_6 = 3\alpha + 2 \beta \;.$$


Consider the equation $n\gamma = \nu_1\beta_1 + \ldots + \nu_6\beta_6$, i.e.,

\vspace{-0.3cm}
\begin{numequation}\label{sumofpositiveroots}
n\gamma = \nu_1\alpha + \nu_2\beta + \nu_3(\alpha+ \beta) + \nu_4( 2\alpha + \beta) + \nu_5 (3\alpha + \beta) + \nu_6(3\alpha + 2 \beta)
\end{numequation}

\noindent We consider all possible cases for $\gamma$ and $I$.


{\it Case when $\gamma = \alpha$ or $\gamma = \beta$.} There is nothing to show in this case, as we can write a multiple of a simple root in one and only one way as a sum of positive roots.

\vskip8pt

{\it Case when $\gamma = \beta_5$ or $\gamma = \beta_6$.} Comparing the coefficients of $\alpha$, the equation \ref{sumofpositiveroots} implies in this case

$$3n = \nu_1 + \nu_3 + 2\nu_4 + 3\nu_5 + 3\nu_6 \,.$$


On the other hand, assuming $\nu_1 + \ldots + \nu_6 < n$ we get that $3\nu_1 + \ldots + 3\nu_6 < \nu_1 + \nu_3 + 2\nu_4 + 3\nu_5 + 3\nu_6$ which implies $2\nu_1 + 3\nu_2 + 2\nu_3 + \nu_4 < 0$ which is impossible. It remains to discuss the cases when $\gamma = \alpha + \beta$ or $\gamma  = 2\alpha + \beta$.

\vskip8pt

{\it Case when $\gamma = \alpha + \beta$.} In this case equation \ref{sumofpositiveroots} implies

\vspace{-0.3cm}
$$n = \nu_1 + \nu_3 + 2 \nu_4 + 3 \nu_5 + 3 \nu_6 \hskip10pt \mbox{and} \hskip10pt n = \nu_2 + \nu_3 + \nu_4 + \nu_5 + 2 \nu_6$$


Subtracting the equation on the right from the equation on the left and adding $\nu_2$ on both sides gives $\nu_2 = \nu_1 + \nu_4 + 2\nu_5 + \nu_6$.

\vskip8pt

(a) Suppose $I = \emptyset$ or $I = \{\alpha\}$. Then we let $\gamma' = \alpha$. Consider

$$\begin{array}{rcl}
x_\alpha^n \cdot y_1^{\nu_1} \cdot \ldots \cdot y_6^{\nu_6}.v^+ & = &
[x_\alpha^{(n)},y_1^{\nu_1} \cdot \ldots \cdot y_6^{\nu_6}].v^+ \\
& & \\
& = &
\sum_{i_1 + \ldots + i_6 = n} {n \choose i_1  \ldots  i_6} [x_\alpha^{(i_1)},y_1^{\nu_1}] \cdot \ldots \cdot [x_\alpha^{(i_6)},y_6^{\nu_6}].v^+
\end{array}$$


Moreover,

\vspace{-0.3cm}
\begin{numequation}\label{commutators}
[x_\alpha^{(i_j)},y_j^{\nu_j}] = \sum_{k_1 + \ldots + k_{\nu_j} = i_j} {i_j \choose k_1  \ldots  k_{\nu_j}} [x_\alpha^{(k_1)},y_j] \cdot \ldots \cdot [x_\alpha^{(k_{\nu_j})},y_j] \hskip4pt .
\end{numequation}

Therefore, if $\beta_j - \alpha$ is not in $\Phi \cup \{0\}$, the term \ref{commutators} vanishes if $i_j>0$. Hence $i_2 = i_6 = 0$. Moreover, if $i_j > \nu_j$, then there must be at least one $k_t \ge 2$ in the formula \ref{commutators}. However, if $\beta_j - 2\alpha$ is not in $\Phi \cup \{0\}$, the term \ref{commutators} vanishes then. And because $[x_\alpha^{(2)},y_{\alpha+\beta}] \in \frg_{-\beta+\alpha} = 0$, we get $i_3 \le \nu_3$. Similarly we see that $i_1 \le 2\nu_1$. Now suppose $n > \nu_1 + \ldots + \nu_6$ and consider the inequality

\vspace{-0.3cm}
$$n = i_1 + \ldots + i_6 > \nu_1 + \ldots + \nu_6 = 2\nu_1 + \nu_3 + 2\nu_4 + 3\nu_5 + 2\nu_6 \,.$$


\noindent Here we have used that $\nu_2 = \nu_1 + \nu_4 + 2\nu_5 + \nu_6$. As $i_2 = i_6 = 0$, $i_3 \le \nu_3$ and $i_1 \le 2\nu_1$ we get

\vspace{-0.3cm}
$$i_4 + i_5 > (2\nu_1 - i_1) + (\nu_3-i_3) + 2\nu_4 + 3\nu_5 + 2\nu_6 \ge 2\nu_4 + 3\nu_5 \,.$$


\noindent This shows that either $i_4 > 2\nu_4$ or $i_5 > 3 \nu_5$. But in each of these cases the corresponding term $[x_\alpha^{(i_4)},y_4^{\nu_4}]$ or $[x_\alpha^{(i_5)},y_5^{\nu_5}]$ vanishes. This proves our assertion in this case.

\vskip8pt

(b) Suppose $I = \{\beta\}$. Then we let $\gamma' = \beta$. Consider

$$\begin{array}{rcl}
x_\beta^n \cdot y_1^{\nu_1} \cdot \ldots \cdot y_6^{\nu_6}.v^+ & = &
[x_\beta^{(n)},y_1^{\nu_1} \cdot \ldots \cdot y_6^{\nu_6}].v^+ \\
& & \\
& = &
\sum_{i_1 + \ldots + i_6 = n} {n \choose i_1  \ldots  i_6} [x_\beta^{(i_1)},y_1^{\nu_1}] \cdot \ldots \cdot [x_\beta^{(i_6)},y_6^{\nu_6}].v^+
\end{array}$$


\noindent Moreover,

\begin{numequation}\label{commutators2}
[x_\beta^{(i_j)},y_j^{\nu_j}] = \sum_{k_1 + \ldots + k_{\nu_j} = i_j} {i_j \choose k_1  \ldots  k_{\nu_j}} [x_\beta^{(k_1)},y_j] \cdot \ldots \cdot [x_\beta^{(k_{\nu_j})},y_j] \; .
\end{numequation}

\noindent Therefore, if $\beta_j - \beta$ is not in $\Phi \cup \{0\}$, the term \ref{commutators2} vanishes if $i_j>0$. Hence $i_1 = i_4 = i_5 = 0$. Moreover, if $i_j > \nu_j$, then there must be at least one $k_t \ge 2$ in the formula \ref{commutators2}. However, if $\beta_j - 2\beta$ is not in $\Phi \cup \{0\}$, the term \ref{commutators2} vanishes then. So we get $i_3 \le \nu_3$ and $i_6 \le \nu_6$. Similarly we see that $i_2 \le 2\nu_2$. Assuming $n > \nu_1 + \ldots + \nu_6$ we get

\vspace{-0.3cm}
$$n = i_1 + \ldots + i_6 > \nu_1 + \ldots + \nu_6 \,,$$


\noindent and hence

\vspace{-0.3cm}
$$i_2 > \nu_1 + \nu_2 + (\nu_3 - i_3) + \nu_4 + \nu_5 + (\nu_6 - i_6) \ge  \nu_2 \,.$$


The left hand side of \ref{commutators2}, for $j=2$, is of weight $(i_2-\nu_2)\beta$, and $[x_\beta^{(i_2)},y_2^{\nu_2}].v^+$ is therefore $0$ or of weight $\lambda + (i_2-\nu_2)\beta$, which shows
that it must vanish. Assuming, as we may, that we had ordered the positive roots such that $\beta$ comes last (i.e., $\beta = \beta_6$ instead of $\beta = \beta_2$), we see that $x_\beta^n \cdot y_1^{\nu_1} \cdot \ldots \cdot y_6^{\nu_6}.v^+$ vanishes.

\vskip8pt

{\it Case when $\gamma = 2\alpha + \beta$.} In this case equation \ref{sumofpositiveroots} implies

\vspace{-0.3cm}
\begin{numequation}\label{relation31}
2n = \nu_1 + \nu_3 + 2 \nu_4 + 3 \nu_5 + 3 \nu_6 \hskip10pt
\end{numequation}
\begin{numequation}\label{relation32}
n = \nu_2 + \nu_3 + \nu_4 + \nu_5 + 2 \nu_6
\end{numequation}

\noindent We assume that $n > \nu_1 + \nu_2 + \nu_3 + \nu_4 + \nu_5 + \nu_6$. By \ref{relation32}, this implies

\vspace{-0.3cm}
$$\nu_2 + \nu_3 + \nu_4 + \nu_5 + 2 \nu_6 > \nu_1 + \nu_2 + \nu_3 + \nu_4 + \nu_5 + \nu_6 \,,$$


\noindent\noindent and therefore $\nu_6 > \nu_1$. In particular,

\vspace{-0.3cm}
\begin{numequation}\label{nu6}
\nu_6 > 0 \,.
\end{numequation}
\noindent Subtracting \ref{relation32} from \ref{relation31} we get $n = \nu_1 - \nu_2 + \nu_4 + 2\nu_5 + \nu_6$, and using again our assumption that $n > \nu_1 + \nu_2 + \nu_3 + \nu_4 + \nu_5 + \nu_6$ we find

\vspace{-0.3cm}
$$\nu_1 - \nu_2 + \nu_4 + 2\nu_5 + \nu_6 > \nu_1 + \nu_2 + \nu_3 + \nu_4 + \nu_5 + \nu_6 \,,$$


\noindent or, equivalently, $\nu_5 > 2\nu_2 + \nu_3$. In particular

\vspace{-0.3cm}
\begin{numequation}\label{nu5}
\nu_5 > 0 \,.
\end{numequation}
\noindent We distinguish two cases.

(a) Suppose $I = \emptyset$ or $I = \{\alpha\}$. Then we let $\gamma' = \alpha$. We arrange the positive roots in this order: $\beta_6, \beta_1, \beta_2, \beta_3, \beta_4, \beta_5$, and write elements of $U(\fru^-_\frb)$ as sums of monomials of the form $y_6^{\nu_6} \cdot y_1^{\nu_1} \cdot \ldots \cdot y_5^{\nu_5}$. Consider

\begin{numequation}\label{multfromleft3}
\begin{array}{cl}
& x_\alpha^{2n} \cdot y_6^{\nu_6} \cdot y_1^{\nu_1} \cdot \ldots \cdot y_5^{\nu_5}.v^+ \\
& \\
= & [x_\alpha^{(2n)},y_6^{\nu_6} \cdot y_1^{\nu_1} \cdot \ldots \cdot y_5^{\nu_5}].v^+ \\
& \\
= & \sum_{i_1 + \ldots + i_6 = 2n} {2n \choose i_1  \ldots  i_6} [x_\alpha^{(i_6)},y_6^{\nu_6}] \cdot [x_\alpha^{(i_1)},y_1^{\nu_1}] \cdot \ldots \cdot [x_\alpha^{(i_5)},y_5^{\nu_5}].v^+
\end{array}
\end{numequation}


\noindent Moreover,

\begin{numequation}\label{commutators3}
[x_\alpha^{(i_j)},y_j^{\nu_j}] = \sum_{k_1 + \ldots + k_{\nu_j} = i_j} {i_j \choose k_1  \ldots  k_{\nu_j}} [x_\alpha^{(k_1)},y_j] \cdot \ldots \cdot [x_\alpha^{(k_{\nu_j})},y_j] \; .
\end{numequation}


\noindent Since $[x_\alpha,y_6] = 0$, it follows from \ref{commutators3} that we only need to consider tuples $(i_1, \ldots, i_6)$ for which $i_6 = 0$. For those tuples we have $2n = i_1 + i_2 + i_3 + i_4 + i_5$. On the other hand, \ref{relation31} implies

\vspace{-0.3cm}
\begin{numequation}\label{upshot1}
\nu_1 + \nu_3 + 2\nu_4 + 3\nu_5 = 2n - 3\nu_6 < 2n \,,
\end{numequation}


\noindent because $\nu_6 > 0$ (cf. \ref{nu6}). Now consider the term

$$[x_\alpha^{(i_1)},y_1^{\nu_1}] \cdot \ldots \cdot [x_\alpha^{(i_5)},y_5^{\nu_5}].v^+$$


\noindent in the third line of \ref{multfromleft3}. It has weight

\vspace{-0.3cm}
$$\begin{array}{cl}
& (i_1 + i_2 + i_3 + i_4 + i_5)\alpha - \nu_1\beta_1 - \nu_2\beta_2 - \nu_3\beta_3 - \nu_4\beta_4 -\nu_5\beta_5 + \lambda\\
& \\
= & (2n - \nu_1 + \nu_3 + 2 \nu_4 + 3 \nu_5)\alpha + (2n-\nu_2 + \nu_3 + \nu_4 + \nu_5)\beta + \lambda
\end{array}$$


\noindent By \ref{upshot1}, this weight is not of the form $\lambda - (\mbox{sum of positive roots})$, and must therefore vanish. Hence

\vspace{-0.3cm}
$$x_\alpha^{2n} \cdot y_6^{\nu_6} \cdot y_1^{\nu_1} \cdot \ldots \cdot y_5^{\nu_5}.v^+ = 0$$


\noindent for all $\nu$ with $n > \nu_1 + \nu_2 + \nu_3 + \nu_4 + \nu_5 + \nu_6$.

\vskip8pt

(b) Suppose $I = \{\beta\}$. Then we let $\gamma' = \alpha+\beta$. In this case we order the positive roots as follows: $\beta_5, \beta_1, \beta_2, \beta_3, \beta_4, \beta_6$, and write elements of $U(\fru^-_\frb)$ as sums of monomials of the form $y_5^{\nu_5} \cdot y_1^{\nu_1} \cdot \ldots \cdot y_4^{\nu_4} \cdot y_6^{\nu_6}$. Consider

\vspace{-0.3cm}
\begin{numequation}\label{multfromleft4}
\begin{array}{cl}
& x_{\alpha+\beta}^n \cdot y_5^{\nu_5} \cdot y_1^{\nu_1} \cdot \ldots \cdot y_4^{\nu_4} \cdot y_6^{\nu_6}.v^+ =  [x_{\alpha+\beta}^{(n)},y_5^{\nu_5} \cdot y_1^{\nu_1} \cdot \ldots \cdot y_4^{\nu_4} \cdot y_6^{\nu_6}].v^+ \\
& \\
= & \sum_{i_1 + \ldots + i_6 = n} {n \choose i_1  \ldots  i_6} [x_{\alpha+\beta}^{(i_5)},y_5^{\nu_5}]\cdot [x_{\alpha+\beta}^{(i_1)},y_1^{\nu_1}] \cdot \ldots \cdot [x_{\alpha+\beta}^{(i_4)},y_4^{\nu_4}] \cdot [x_{\alpha+\beta}^{(i_6)},y_6^{\nu_6}].v^+
\end{array}
\end{numequation}

\noindent Moreover,

\begin{numequation}\label{commutators4}
[x_{\alpha+\beta}^{(i_j)},y_j^{\nu_j}] = \sum_{k_1 + \ldots + k_{\nu_j} = i_j} {i_j \choose k_1  \ldots  k_{\nu_j}} [x_{\alpha+\beta}^{(k_1)},y_j] \cdot \ldots \cdot [x_{\alpha+\beta}^{(k_{\nu_j})},y_j] \hskip4pt .
\end{numequation}


\noindent Since $[x_{\alpha+\beta},y_5] = 0$, it follows from \ref{commutators4} that we only need to consider tuples $(i_1, \ldots, i_6)$ for which $i_5 = 0$. For those tuples we have $n = i_1 + i_2 + i_3 + i_4 + i_6$. On the other hand, \ref{relation32} implies

\vspace{-0.3cm}
\begin{numequation}\label{upshot2}
\nu_2 + \nu_3 + \nu_4 + 2\nu_6 = n - \nu_5 < n \,,
\end{numequation}


\noindent because $\nu_5 > 0$ (cf. \ref{nu5}). Now consider the term

\vspace{-0.3cm}
$$[x_\alpha^{(i_1)},y_1^{\nu_1}] \cdot \ldots \cdot [x_\alpha^{(i_4)},y_4^{\nu_4}] \cdot [x_\alpha^{(i_6)},y_6^{\nu_6}].v^+$$


\noindent in the last line of \ref{multfromleft4}. It has weight

\vspace{-0.3cm}
$$\begin{array}{cl}
& (i_1 + i_2 + i_3 + i_4 + i_6)(\alpha + \beta) - \nu_1\beta_1 - \nu_2\beta_2 - \nu_3\beta_3 - \nu_4\beta_4 -\nu_6\beta_6 + \lambda\\
& \\
= & (n - \nu_1 + \nu_3 + 2 \nu_4 + 3 \nu_5)\alpha + (n-\nu_2 + \nu_3 + \nu_4 + 2\nu_6)\beta + \lambda
\end{array}$$


\noindent By \ref{upshot2}, this weight is not of the form $\lambda - (\mbox{sum of positive roots})$, and must therefore vanish. Hence

\vspace{-0.3cm}
$$x_{\alpha + \beta}^n \cdot y_5^{\nu_5} \cdot y_1^{\nu_1} \cdot \ldots \cdot y_4^{\nu_4} \cdot y_6^{\nu_6}.v^+ = 0$$


\noindent for all $\nu$ with $n > \nu_1 + \nu_2 + \nu_3 + \nu_4 + \nu_5 + \nu_6$.
This completes the proof. \qed

\bibliographystyle{plain}
\bibliography{JoHo}

\def\cprime{$'$}
\begin{thebibliography}{10}

\bibitem{AL}
H.~H. Andersen and N.~Lauritzen.
\newblock Twisted {V}erma modules.
\newblock In {\em Studies in memory of {I}ssai {S}chur ({C}hevaleret/{R}ehovot,
  2000)}, volume 210 of {\em Progr. Math.}, pages 1--26. Birkh\"auser Boston,
  Boston, MA, 2003.

\bibitem{BGG}
I.~N. Bern{\v{s}}te{\u\i}n, I.~M. Gel{\cprime}fand, and S.~I. Gel{\cprime}fand.
\newblock A certain category of {${\mathfrak g}$}-modules.
\newblock {\em Funkcional. Anal. i Prilo\v zen.}, 10(2):1--8, 1976.

\bibitem{BW}
A.~Borel and N.~Wallach.
\newblock {\em Continuous cohomology, discrete subgroups, and representations
  of reductive groups}, volume~67 of {\em Mathematical Surveys and Monographs}.
\newblock American Mathematical Society, Providence, RI, second edition, 2000.

\bibitem{Bo}
Raoul Bott.
\newblock Homogeneous vector bundles.
\newblock {\em Ann. of Math. (2)}, 66:203--248, 1957.

\bibitem{Car}
Roger~W. Carter.
\newblock {\em Finite groups of {L}ie type}.
\newblock Wiley Classics Library. John Wiley \& Sons Ltd., Chichester, 1993.
\newblock Conjugacy classes and complex characters, Reprint of the 1985
  original, A Wiley-Interscience Publication.

\bibitem{Ca}
W.~Casselman.
\newblock On a {$p$}-adic vanishing theorem of {G}arland.
\newblock {\em Bull. Amer. Math. Soc.}, 80:1001--1004, 1974.

\bibitem{DMP}
Ivan Dimitrov, Olivier Mathieu, and Ivan Penkov.
\newblock On the structure of weight modules.
\newblock {\em Trans. Amer. Math. Soc.}, 352(6):2857--2869, 2000.

\bibitem{DDMS}
J.~D. Dixon, M.~P.~F. du~Sautoy, A.~Mann, and D.~Segal.
\newblock {\em Analytic pro-{$p$} groups}, volume~61 of {\em Cambridge Studies
  in Advanced Mathematics}.
\newblock Cambridge University Press, Cambridge, second edition, 1999.

\bibitem{Em2}
M.~Emerton.
\newblock Jacquet modules of locally analytic representations of $p$-adic
  reductive groups {II}.
\newblock The relation to parabolic induction. Preprint.

\bibitem{Em1}
M.~Emerton.
\newblock Locally analytic vectors in representations of locally $p$-adic
  analytic groups.
\newblock {T}o appear in {M}emoirs of the {A}{M}{S}.

\bibitem{F}
Christian~Tobias F{\'e}aux~de Lacroix.
\newblock Einige {R}esultate \"uber die topologischen {D}arstellungen
  {$p$}-adischer {L}iegruppen auf unendlich dimensionalen {V}ektorr\"aumen
  \"uber einem {$p$}-adischen {K}\"orper.
\newblock In {\em Schriftenreihe des {M}athematischen {I}nstituts der
  {U}niversit\"at {M}\"unster. 3. {S}erie, {H}eft 23}, volume~23 of {\em
  Schriftenreihe Math. Inst. Univ. M\"unster 3. Ser.}, pages x+111. Univ.
  M\"unster, M\"unster, 1999.

\bibitem{Ha}
Robin Hartshorne.
\newblock {\em Algebraic geometry}.
\newblock Springer-Verlag, New York, 1977.
\newblock Graduate Texts in Mathematics, No. 52.

\bibitem{H2}
James~E. Humphreys.
\newblock {\em Introduction to {L}ie algebras and representation theory},
  volume~9 of {\em Graduate Texts in Mathematics}.
\newblock Springer-Verlag, New York, 1978.
\newblock Second printing, revised.

\bibitem{H1}
James~E. Humphreys.
\newblock {\em Representations of semisimple {L}ie algebras in the {BGG}
  category {$\mathscr{O}$}}, volume~94 of {\em Graduate Studies in
  Mathematics}.
\newblock American Mathematical Society, Providence, RI, 2008.

\bibitem{Ja}
Jens~Carsten Jantzen.
\newblock {\em Representations of algebraic groups}, volume 107 of {\em
  Mathematical Surveys and Monographs}.
\newblock American Mathematical Society, Providence, RI, second edition, 2003.

\bibitem{Jo}
Owen T.~R. Jones.
\newblock An analogue of the {BGG} resolution for locally analytic principal
  series.
\newblock {\em J. Number Theory}, 131(9):1616--1640, 2011.

\bibitem{Kn}
Anthony~W. Knapp.
\newblock {\em Lie groups, {L}ie algebras, and cohomology}, volume~34 of {\em
  Mathematical Notes}.
\newblock Princeton University Press, Princeton, NJ, 1988.

\bibitem{K1}
Jan Kohlhaase.
\newblock Invariant distributions on {$p$}-adic analytic groups.
\newblock {\em Duke Math. J.}, 137(1):19--62, 2007.

\bibitem{K2}
Jan Kohlhaase.
\newblock The cohomology of locally analytic representations.
\newblock {\em J. Reine Angew. Math.}, 651:187--240, 2011.

\bibitem{Ku}
Shrawan Kumar.
\newblock {\em Kac-{M}oody groups, their flag varieties and representation
  theory}, volume 204 of {\em Progress in Mathematics}.
\newblock Birkh\"auser Boston Inc., Boston, MA, 2002.

\bibitem{Le}
J.~Lepowsky.
\newblock Generalized {V}erma modules, the {C}artan-{H}elgason theorem, and the
  {H}arish-{C}handra homomorphism.
\newblock {\em J. Algebra}, 49(2):470--495, 1977.

\bibitem{Mo1}
Yasuo Morita.
\newblock A {$p$}-adic theory of hyperfunctions. {I}.
\newblock {\em Publ. Res. Inst. Math. Sci.}, 17(1):1--24, 1981.

\bibitem{Mo2}
Yasuo Morita.
\newblock Analytic representations of {${\rm SL}_2$} over a {${\mathfrak
  p}$}-adic number field. {II}.
\newblock In {\em Automorphic forms of several variables ({K}atata, 1983)},
  volume~46 of {\em Progr. Math.}, pages 282--297. Birkh\"auser Boston, Boston,
  MA, 1984.

\bibitem{O}
Sascha Orlik.
\newblock Equivariant vector bundles on {D}rinfeld's upper half space.
\newblock {\em Invent. Math.}, 172(3):585--656, 2008.

\bibitem{OS}
Sascha Orlik and Matthias Strauch.
\newblock On the irreducibility of locally analytic principal series
  representations.
\newblock {\em Represent. Theory}, 14:713--746, 2010.

\bibitem{Po}
K.~Pohlkamp.
\newblock Randwerte holomorpher {F}unktionen auf $p$-adischen symmetrischen
  {R}\"aumen.
\newblock Diplomarbeit, Universit\"at M\"unster, 2004.

\bibitem{Sch}
Tobias Schmidt.
\newblock Auslander regularity of {$p$}-adic distribution algebras.
\newblock {\em Represent. Theory}, 12:37--57, 2008.

\bibitem{ScStI}
P.~Schneider and U.~Stuhler.
\newblock The cohomology of {$p$}-adic symmetric spaces.
\newblock {\em Invent. Math.}, 105(1):47--122, 1991.

\bibitem{ST4}
P.~Schneider and J.~Teitelbaum.
\newblock {$U({\mathfrak g})$}-finite locally analytic representations.
\newblock {\em Represent. Theory}, 5:111--128 (electronic), 2001.
\newblock With an appendix by Dipendra Prasad.

\bibitem{S1}
Peter Schneider.
\newblock {\em Nonarchimedean functional analysis}.
\newblock Springer Monographs in Mathematics. Springer-Verlag, Berlin, 2002.

\bibitem{S2}
Peter Schneider.
\newblock Continuous representation theory of {$p$}-adic {L}ie groups.
\newblock In {\em International {C}ongress of {M}athematicians. {V}ol. {II}},
  pages 1261--1282. Eur. Math. Soc., Z\"urich, 2006.

\bibitem{ST1}
Peter Schneider and Jeremy Teitelbaum.
\newblock Locally analytic distributions and {$p$}-adic representation theory,
  with applications to {${\rm GL}\sb 2$}.
\newblock {\em J. Amer. Math. Soc.}, 15(2):443--468 (electronic), 2002.

\bibitem{ST3}
Peter Schneider and Jeremy Teitelbaum.
\newblock {$p$}-adic boundary values.
\newblock {\em Ast\'erisque}, (278):51--125, 2002.
\newblock Cohomologies $p$-adiques et applications arithm{\'e}tiques, I.

\bibitem{ST2}
Peter Schneider and Jeremy Teitelbaum.
\newblock Algebras of {$p$}-adic distributions and admissible representations.
\newblock {\em Invent. Math.}, 153(1):145--196, 2003.

\bibitem{ST5}
Peter Schneider and Jeremy Teitelbaum.
\newblock Duality for admissible locally analytic representations.
\newblock {\em Represent. Theory}, 9:297--326 (electronic), 2005.

\bibitem{Schr}
Benjamin Schraen.
\newblock Repr\'esentations localement analytiques de {${\rm GL}\sb 3(\Bbb Q\sb
  p)$}.
\newblock {\em Ann. Sci. \'Ec. Norm. Sup\'er. (4)}, 44(1):43--145, 2011.

\bibitem{Te}
J.~Teitelbaum.
\newblock Admissible analytic representations. {A}n introduction to three
  questions.
\newblock Talk at the Harvard Eigensemester, 2006.

\end{thebibliography}

\end{document}